\numberwithin{equation}{section}
\theoremstyle{plain}
\newtheorem{thm}{Theorem}[section]
\newtheorem{lem}{Lemma}[section]
\newtheorem{cor}[thm]{Corollary}
\newtheorem{prop}[lem]{Proposition}
\theoremstyle{definition}
\newtheorem{rem}[lem]{Remark}
\newtheorem{conv}[lem]{Convention}
\newtheorem{defn}[lem]{Definition}
\def\Q{\tilde{Q}}
\def\cE{{\mathcal{E}}}
\def\Re{\mathrm{Re}}
\def\tP{{\tilde{\P}}}
\def\la{\mathrm{la}}
\def\mL{\mathrm{L}}
\def\mS{\mathrm{S}}
\def\sm{\mathrm{sm}}
\def\lan{\langle}
\def\ran{\rangle}
\def\tO{{\tilde{\Omega}}}
\def\ts{\tilde{s}}
\def\tS{\tilde{\So}}
\def\tD{{\tilde{\Delta}}}
\def\tl{{\tilde{\lambda}}}
\def\RS{\mathrm{RS}}
\def\rank{\mathrm{rank}}
\def\im{\mathrm{im}}
\def\rel{\mathrm{rel}}
\def\bd{\mathrm{bd}}
\def\lan{\langle}
\def\ran{\rangle}
\def\abs{\mathrm{abs}}
\def\Tr{\operatorname{Tr}}
\def\tr{\operatorname{tr}}
\def\tr{\operatorname{tr}}
\def\bm{\bar{M}}
\def\Z{\mathbb{Z}}
\def\supp{\mathrm{supp}}
\def\P{\mathcal{P}}
\def\dvol{\mathrm{dvol}}
\def\l{\lambda}
\def\a{\alpha}
\def\b{\beta}
\def\R{\mathbb{R}}
\def\F{{\bar{F}}}
\def\hc{\hat{c}}
\def\C{\mathbb{C}}
\def\sqt{\sqrt{\frac{2}{T}}}
\def\Im{\mathrm{im}}
\def\Dom{\mathrm{Dom}}
\def\half{\frac{1}{2}}
\def\e{\mathcal{E}}
\def\ep{\epsilon}
\def\So{\operatorname{S}}
\def\H{\mathcal{H}}
\def\A{\mathcal{A}}
\def\o{\omega}
	\def\p{\partial}
	\def\a{\alpha}
	\def\half{\frac{1}{2}}
	\def\l{\lambda}
\def\T{\mathcal{T}}
\begin{document}

	\newcommand{\be}{\begin{equation}}
		\newcommand{\ee}{\end{equation}}
	\newcommand{\bc}{\begin{cases}}
		\newcommand{\ec}{\end{cases}}
	\newcommand{\bes}{\begin{equation*}}
		\newcommand{\ees}{\end{equation*}}
	\newcommand{\ba}{\begin{aligned}}
		\newcommand{\ea}{\end{aligned}}
	\newcommand{\bas}{\begin{align*}}
		\newcommand{\eas}{\end{align*}}
	\newcommand{\es}{\end{split}}
\newcommand{\bs}{\begin{split}}

	\title{Witten deformation for non-Morse functions and\\ the gluing formula for analytic torsions}
	
	\date{}                                      
	
	\author{Junrong Yan
		\footnote{Beijing International Center for Mathematical Research, Peking University, Beijing, China 100871
		}
  \footnote{Department of mathematics, Northeastern University, Boston, MA, USA 02215, j.yan@northeastern.edu.}
}
	
	\maketitle
	
	\abstract{This paper concentrates on analyzing Witten deformation for a family of non-Morse functions parameterized by $T\in \R_+$, resulting in a novel, purely analytic proof of the gluing formula for analytic torsions in complete generality due to Br\"unning-Ma \cite{bruning2013gluing}. Intriguingly, the gluing formula in this article could be reformulated as the Bismut-Zhang theorem \cite{bismutzhang1992cm} for non-Morse functions, and from the perspective of Vishik's theory of moving boundary problems \cite{vishik1995generalized,lesch2013gluing}, the deformation parameter $T$ parameterize a family of boundary conditions. Our proof also makes use of a connection between small eigenvalues of Witten Laplacians and Mayer-Vietoris sequences. Finally, these new techniques could be extended to analytic torsion forms and play key roles in the study of the higher Cheeger-Müller/Bismut-Zhang theorem for nontrivial flat bundles \cite{MY}.\\
		\textbf{Key Words:} Witten deformation, gluing formulas, finite propagation speed, eigenvalues of Laplacian operators
	}
	\tableofcontents
	\section{Introduction}

	\subsection{Overview}
In \cite{ray1971r}, Ray and Singer introduced analytic torsion (RS-torsion) for unitarily flat vector bundles \( F \) over a closed Riemannian manifold \( M \), conjecturing that it coincides with Reidemeister-Franz torsion (RF-torsion)—a topological invariant introduced by Reidemeister \cite{reidemeister1935homotopieringe}, Franz \cite{Franz}, and de Rham \cite{derh}, which distinguishes homotopy-equivalent but non-homeomorphic CW complexes and manifolds. This conjecture was independently proven by Cheeger \cite{cheeger1979analytic} and M\"uller \cite{muller1978analytic}. Simultaneously, M\"uller then extended the result to unimodular flat bundles \cite{muller1993analytic}, and Bismut and Zhang to arbitrary flat bundles \cite{bismutzhang1992cm}. This conjecture is now known as the Cheeger-M\"uller/Bismut-Zhang Theorem. There are also various extensions to the equivariant cases \cite{bismut1994milnor,lott1991analytic,luck1993analytic}.

 Let \( Y \subset M \) be a hypersurface separating \( M \) into two pieces, \( M_1 \) and \( M_2 \). Ray and Singer \cite{ray1971r}, as a step toward their conjecture, suggested that a gluing formula should relate the analytic torsion of \( M \) to those of \( M_1 \) and \( M_2 \). However, the conjecture was eventually proven by other methods, and the gluing formula was derived from some version of it. For example, L\"uck \cite{luck1993analytic} established gluing formula for unitarily flat vector bundles using the Cheeger-M\"uller theorem and \cite{lott1991analytic}. Br\"unning and Ma \cite{bruning2013gluing}, building on Bismut and Zhang’s \cite{bismut1994milnor} equivariant version of the extended Cheeger-M\"uller theorem, extended the result to general flat bundles. There are also purely analytic proofs of the gluing formula, that is, without relying on any version of the Cheeger-Müller/Bismut-Zhang theorem. Vishik \cite{vishik1995generalized} proved the formula for unitarily flat bundles using moving boundary conditions, while Puchol, Zhang, and Zhu proved the formula for general flat bundles \cite{10.2140/apde.2021.14.77} using adiabatic limits. Related work includes contributions by Hassell \cite{Hassel1998} and Lesch \cite{lesch2013gluing}.

The proof of gluing formula in this paper is based on the analysis of Witten deformation for non-Morse functions and some coupling techniques (see the last paragraph in \cref{idea} for a brief description of coupling techniques). Roughly, we choose a family of smooth functions $f_T$ such that $f_T$ has critical loci approaching $M_1$ and $M_2$ with ``Morse indices" of $0$ and $1$, respectively. Then, based on the philosophy of Witten deformation, letting $T$ range from $0$ to $\infty$, the relationship between analytic torsion on $M$ and analytic torsion on the two pieces above can be understood. Thus, philosophically, the gluing formula can be interpreted as a Bismut-Zhang theorem (extended Cheeger-M\"uller theorem) \cite{bismutzhang1992cm} for non-Morse functions.
\begin{rem}A similar non-Morse function was introduced by Puchol, Zhang, and Zhu in \cite{puchol2020adiabatic}. However, in their work, such functions are used to ensure a uniform spectral gap for their two-parameter Witten Laplacian, rather than to separate the space in the spectral-theoretic sense as in our case. Specifically, we show that the \(k\)-th eigenvalue of the Witten Laplacian associated with non-Morse functions converges to the \(k\)-th eigenvalues of the Laplacians (with suitable boundary conditions) on \(M_1\) and \(M_2\) (see \eqref{ob1} or \Cref{eigencon}).

\end{rem}
	
The approach in this paper can also be applied to analytic torsion forms \cite{Yanforms}, offering an alternative proof of the gluing formula for torsion forms by Puchol-Zhang-Zhu \cite{puchol2020adiabatic}. Moreover, the techniques introduced here and in \cite{Yanforms} play a crucial role in the author's joint project with Martin Puchol on the higher Cheeger-Müller/Bismut-Zhang theorem \cite{MY}. This approach may also be useful for proving gluing formulas for other global spectral invariants, such as eta-invariants, eta-forms, and partition functions from quantum field theory.
	\ \\ \ \\

	\subsection{The gluing formulas}
	Let $(M,g^{TM})$ be a closed Riemannian manifold and $Y\subset M$ be a hypersurface that divides $M$ into two pieces $M_1$ and $M_2$.
	
	Let $F\to M$ be a flat complex vector bundle with a flat connection $\nabla^F$. Let $d^F$ be the covariant differential on $F$-valued differentials $\Omega(M;F)$, which is induced by $\nabla^F$. 
	
	Let $h^F$ be a Hermitian metric on $F$. Let $d^{F,*}$ be the (formal) adjoint operator of $d^F$ associated with $g^{T M}$ and $h^F$. Then $D:=d^F+d^{F,*}$ is a first-order self-adjoint elliptic operator acting on $\Omega (M;F)$. 
	
	Let $N$ be the number operator on $\Omega \left(M;F\right)$, that is , $N \omega=k \omega$ for $\omega \in \Omega^k\left(M; F\right)$. The zeta function $\zeta$ for $\Delta:=D^2$ is defined as follows, for $z \in\{\mathbb{C}$ : $\left.\operatorname{Re}(z)>\frac{1}{2} \operatorname{dim} M\right\}$
	$$
	\zeta(z):=\frac{1}{\Gamma
		( z)}\int_0^\infty t^{z-1}\Tr_s\qty(Ne^{-t\Delta'})dt.
	$$
Here, \(\Delta'\) denotes the restriction of \(\Delta\) to the orthogonal complement of the space of harmonic forms. 
	The function $\zeta(z)$ admits a meromorphic continuation to the whole complex plane, which is holomorphic at $0 \in \mathbb{C}$. See \cref{zetadef} for more details.
	
	Then the Ray-Singer analytic torsion is given by $\mathcal{T}(g^{TM},h^F,\nabla^F):=e^{\frac{1}{2}\zeta'(0)}.$
	
	Let $F_i$ be the restriction of $F$ on $M_i$, $d_i^F$ be the restriction of $d^F$ on $M_i$, $d_i^{F,*}$ be the formal adjoint of $d_i^F$ (i=1,2).
	Let $\Delta_1:=(d_1^F+d_1^{F,*})^2$ and $\Delta_2=(d_2^F+d_2^{F,*})^2$ act on $\Omega_{\text {abs }} \left(M_1;F_1\right)$ and $\Omega_{\text {rel }} \left(M_{2};F_2\right)$ respectively (see \cref{oabs} for the definition of $\Omega_{\text {abs }} \left(M_1;F_1\right)$ and $\Omega_{\text {rel} } \left(M_2;F_2\right)$). Let $\zeta_{i}$ be the zeta functions for $\Delta_i$. Then similarly, one can define Ray-Singer analytic torsion $\T_i(g^{TM_i},h^{F_i},\nabla^{F_i})=e^{\frac{1}{2}\zeta_i'(0)}$, where $g^{TM_i}$, $F_i, h^{F_i}$ and $\nabla^{F_i}$ are the restriction of $g^{TM},F,h^F$ and $\nabla^{F}$ on $M_i$ respectively($i=1,2$).
	
	Lastly, we have the following Mayer-Vietoris exact sequence 
	\be\label{mv000}
	\cdots \rightarrow H^k_{\rel}\left(M_2;F_2\right) \rightarrow H^k\left(M; F\right) \rightarrow H^k_{\abs}\left(M_1;F_1\right) \rightarrow \cdots .
	\ee
	We denote by $\mathcal{T}$ the analytic torsion for the exact sequence above equipped with $L^2$-metrics (induced by Hodge theory).
	
The fully general gluing formula given below was originally established in \cite{bruning2013gluing} using the (equivariant) Cheeger-Müller/Bismut-Zhang theorem. 
	\begin{thm}[Br\"uning-Ma \cite{bruning2013gluing}]\label{main0}
		\begin{align*}
			&\ \ \ \ \log\mathcal{T}(g^{TM},h^F,\nabla^F)-\log\T_1(g^{TM_1},h^{F_1},\nabla^{F_1})-\log\T_2(g^{TM_2},h^{F_2},\nabla^{F_2})-\log \mathcal{T}\\
			&=\frac{1}{2} \chi(Y) \rank(F) \log 2+(-1)^{\dim(M)} \mathrm{rank}(F) \int_{Y} B\left(g^{TM}\right),\end{align*}
		where $B\left(g^{TM}\right)$ is the secondary characteristic form introduced by Br\"uning-Ma in \cite{bruning2006anomaly}, which is zero if $Y$ is totally geodesic in $\left(M, g^{T M}\right)$.
	\end{thm}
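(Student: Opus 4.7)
\emph{Proof outline.} The strategy is to extend the Witten deformation of \cite{bismutzhang1992cm} to non-Morse functions whose ``critical locus'' is the decomposition $M=M_1\cup M_2$ itself. Fix a smooth $f\colon M\to[0,1]$ with $f\equiv 0$ on $M_1$ and $f\equiv 1$ on $M_2$ outside a collar $U$ of $Y$. For $T>0$, set $d_T:=e^{-Tf}d^F e^{Tf}=d^F+T\,df\wedge$, with adjoint $d_T^*$ for $(g^{TM},h^F)$, and Witten Laplacian $\Delta_T:=(d_T+d_T^*)^2$. Since $d_T$ is a gauge transform of $d^F$, the spectrum of $\Delta_T$ equals that of the Hodge Laplacian for the Hermitian data $(g^{TM},e^{-2Tf}h^F,\nabla^F)$, whose analytic torsion $\T_T$ differs from $\T$ at $T=0$ by an explicit Bismut-Zhang-type anomaly integral that is localized on $U$ and can be analyzed as $T\to\infty$.

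The spectral heart of the argument is the asymptotic analysis of $\Delta_T$ for large $T$. The Witten potential $T^2|df|^2$ is supported on $U$ and forces eigenforms at bounded energy to concentrate on $M_1\cup M_2$; a one-dimensional harmonic-oscillator model transverse to $Y$ shows that the exponential decay across $U$ selects absolute boundary conditions at $Y$ from the $M_1$-side and relative boundary conditions from the $M_2$-side, which is the content of the ``Morse index $0$ resp.\ $1$'' labelling. Quasi-modes for the small spectrum are then built by taking eigenforms of $\Delta_1$ on $M_1$ with absolute b.c.\ and of $\Delta_2$ on $M_2$ with relative b.c., cutting them off near $Y$, and matching them in $U$ to the harmonic-oscillator ground state. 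Agmon-type estimates show these span the small eigenvalue subspace of $\Delta_T$, while the complementary eigenvalues are bounded below by $cT$.

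With the spectral splitting in hand, I expand the $\zeta$-regularized $\log\T_T$ as $T\to\infty$. The large-eigenvalue contribution admits a local heat-kernel expansion on $U$ whose leading anomaly produces the boundary secondary characteristic $(-1)^{\dim M}\rank(F)\int_Y B(g^{TM})$. The small eigenvalue contribution asymptotes to $\log\T_1+\log\T_2+\log\mathcal{T}$, with the Mayer-Vietoris torsion $\log\mathcal{T}$ appearing as the log-Jacobian of the asymptotic isomorphism between the $L^2$ Hodge metric on $H^*(M;F)$ and the metric induced via the quasi-mode identification with $H^*_{\abs}(M_1;F_1)\oplus H^*_{\rel}(M_2;F_2)$; the exact sequence \eqref{mv000} is precisely the obstruction to this identification being an isometry. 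A residual $\tfrac12\chi(Y)\rank(F)\log 2$ arises from an $L^2$-normalization Jacobian in the quasi-mode gluing: two half-line harmonic-oscillator ground states glue into a full-line ground state with a $\sqrt2$ factor on the restriction to $Y$, contributing $\tfrac12\log 2$ per form degree summed against $\chi(Y)$ through the Poincar\'e-Hopf type counting on $Y$. Matching these asymptotics with the anomaly formula relating $\log\T_T$ to $\log\T$ yields the claimed identity.

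\emph{Main obstacle.} The principal technical difficulty is the spectral splitting in the second paragraph. Since $M_1$ and $M_2$ are codimension-zero submanifolds rather than isolated non-degenerate critical points, the classical Helffer-Sj\"ostrand quasi-mode construction does not apply directly; the ``coupling technique'' mentioned in the introduction is its replacement. One must couple the eigenvalue problem for $\Delta_T$ on $M$ with the pair of absolute/relative boundary value problems on $(M_1,M_2)$ by matching Cauchy data at $Y$, and control the coupling error using the harmonic-oscillator transverse model together with exponential decay away from $Y$. Proving that this coupling is exhaustive (no small eigenvalue is missed) and that the induced change of Hodge metric is exactly the Mayer-Vietoris torsion $\mathcal{T}$, uniformly as $T\to\infty$, is the technical heart of the proof.
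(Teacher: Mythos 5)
Your outline captures the broad philosophy of the paper (Witten deformation with the decomposition $M_1\cup M_2$ playing the role of a non-Morse critical locus, spectral concentration selecting absolute/relative boundary conditions, convergence of eigenvalues as $T\to\infty$, and the Mayer-Vietoris torsion arising from the small spectrum), but several of the attributions of where the correction terms come from are wrong, and the ``coupling technique'' you describe is not the one actually used.

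\textbf{The $B(g^{TM})$ term is mis-attributed.} You claim the secondary characteristic $(-1)^{\dim M}\rank(F)\int_Y B(g^{TM})$ arises from the large-eigenvalue heat expansion on the collar $U$ as $T\to\infty$. In fact, the paper first invokes \cite[Thm.\,4.7]{bismutzhang1992cm} and the Br\"uning--Ma anomaly formula \cite{bruning2006anomaly} to reduce to the case where $g^{TM}$ and $h^F$ are product-type near $Y$; this reduction (packaged in Theorem \ref{int1p}) is what produces the $\int_Y B(g^{TM})$ contribution, and in the product case that term simply vanishes. The Witten deformation analysis proper therefore never sees $B(g^{TM})$, and an attempt to derive it from the large-$T$ heat trace would fail: in the product case there is no such anomaly to produce.

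\textbf{The coupling technique is a different mechanism than what you describe.} You interpret ``coupling'' as matching Cauchy data at $Y$ between the eigenvalue problems on $M$, $M_1$, and $M_2$. In the paper, the central difficulty is the small-time behavior of the supertrace: there is no uniform-in-$t$ control of $\Tr_s(Ne^{-t\Delta_T'})$ as $T\to\infty$. The coupling is a \emph{time--parameter} coupling: one studies $\Tr_s(Ne^{-t\Delta_{t^{-7}T}'})$, substituting $\tT=t^{-7}T$ so that the model heat-kernel bounds (Propositions \ref{heat1form} and \ref{lacon}) decay in $t$ and are summable against $t^{-1}$. That is the technical content of \cref{coups}, and it is not a Cauchy-data matching. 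Relatedly, the paper passes to the stretched manifold $\bm=M_1\cup([-1,1]\times Y)\cup M_2$ (Figure \ref{fig2}) precisely so that the one-dimensional model on $[-2,2]$ decouples exactly from $Y$; your outline omits this device.

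\textbf{The $\tfrac12\chi(Y)\rank(F)\log 2$ term is not obtained by the Jacobian heuristic you give.} The paper isolates a $T$-dependent but manifold-independent quantity $\tilde\zeta_T'(0)$ coming from the one-dimensional Witten Laplacian on $[-2,2]$, and evaluates it by computing Ray--Singer metrics explicitly on $S^1$ and $[-2,2]$ (Lemmas \ref{s1} and \ref{s2}, yielding Propositions \ref{zeta10} and \ref{zeta11}: $\tilde\zeta_T'(0)=\log 2 - T + o(1)$, $\tilde\zeta_{T,i}'(0)=-T/2+o(1)$). A ``$\sqrt2$ from gluing two half-line ground states'' argument does not by itself fix the $T$-linear divergence, the sign and factor of $\chi(Y)$, nor the cancellation against $\sum_i\tilde\zeta_{T,i}'(0)$; the paper needs the precise $S^1$ and $[-2,2]$ calculation to pin these down. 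Finally, your quasi-mode construction is essentially the paper's map $Q_T$ in \cref{lastreal}, but the paper further needs to verify (Theorem \ref{exathm} and Proposition \ref{last8p}) that the induced short exact sequences are \emph{exact} and that $e_{k,T}$, $r_{k,T}^*$ become isometric as $T\to\infty$, in order to invoke the finite-dimensional comparison result \cite[Prop.\,2.6]{lesch2013gluing}; asserting that the Mayer-Vietoris torsion ``appears as the log-Jacobian'' elides exactly these verifications.
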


\begin{rem}
    The anomaly formulas of Bismut-Zhang \cite[Theorem 4.7]{bismutzhang1992cm} and Br\"uning-Ma \cite[Theorem 0.1]{bruning2006anomaly}, \cite[Theorem 3.4]{bruning2013gluing} play a significant role in simplifying the problem. Using these formulas, it suffices to focus on the case where \( g^{TM} \) and \( h^F \) are product-type near \( Y \) (similar to what Br\"unning-Ma did in \cite{bruning2013gluing}). In the product-type case, the term \( \int_{Y} B(g^{TM}) \) vanishes. Without loss of generality, from now on, we assume that the metrics are of product-type near \( Y \).
\end{rem}

Although many proofs of gluing formulas for analytic torsions already exist, as noted in the overview, the author presents
this proof because the techniques are new and could be applied to study gluing formulas
for other global spectral invariants. Furthermore, the
methods developed here and in \cite{Yanforms} are key in the author’s joint work with Martin
Puchol on the higher Cheeger-M\"uller/Bismut-Zhang theorem for unitarily flat bundles \cite{MY}.

	\subsection{Main ideas}\label{idea}
	
	Let $Y\subset M$ be a hypersurface cutting $M$ into two pieces $M_1$ and $M_2$ (see Figure \ref{fig1}). We then glue $M_1$, $M_2$ and $[-1,1]\times Y$ naturally; we get a manifold $\bm$ (see Figure \ref{fig2}), which is diffeomorphic to the original manifold $M.$

    Let \( f_T \) be a smooth jump function on \( \bm \) (see Figure \ref{fig3}), satisfying \( f_T \equiv -T/2 \) on \( M_1 \) and \( f_T \equiv T/2 \) on \( M_2 \) (see \cref{aw} for additional requirements on \( f_T \)). 
	\begin{figure}[h]
		\setlength{\unitlength}{0.75cm}
		\centering
		\begin{picture}(18,6.5)
			\qbezier(3,2)(5,1)(7,2) 
			\qbezier(3,4)(5,5)(7,4) 
			\qbezier(3,2)(1.5,3)(3,4) 
			\qbezier(4,3.2)(5,2.7)(6,3.2) 
			\qbezier(4.5,3)(5,3.2)(5.5,3) 
			\qbezier(7,2)(7.5,2.3)(8,2.4) 
			\qbezier(7,4)(7.5,3.7)(8,3.6) 
			\qbezier(11,2)(13,1)(15,2) 
			\qbezier(11,4)(13,5)(15,4) 
			\qbezier(15,2)(16.5,3)(15,4) 
			\qbezier(12,3.2)(13,2.7)(14,3.2) 
			\qbezier(12.5,3)(13,3.2)(13.5,3) 
			\qbezier(11,2)(10.5,2.3)(10,2.4) 
			\qbezier(11,4)(10.5,3.7)(10,3.6) 
			\qbezier(8,2.4)(9,2.4)(10,2.4) 
			\qbezier(8,3.6)(9,3.6)(10,3.6) 
			
			\qbezier(9,2.4)(8.7,3)(9,3.6) 
			\qbezier[30](9,2.4)(9.3,3)(9,3.6) 
			
			\put(9,5.3){\vector(0,-1){1.5}} 
			
			\put(3,4.8){$\overbrace{\hspace{40mm}}^{}$} \put(5.1,5.4){$M_{1}$}  
			\put(9.7,4.8){$\overbrace{\hspace{40mm}}^{}$} \put(11.9,5.4){$M_{2}$}  
			\put(3.5,1.2){$\underbrace{\hspace{83mm}}_{}$} \put(8.8,0.3){$M$}  
			\put(9,5.5){$Y$}
		\end{picture}
		\caption{Cutting $M$ along $Y$}
		\label{fig1}
	\end{figure}

	\begin{figure}[h]
		\setlength{\unitlength}{0.75cm}
		\centering
		\begin{picture}(18,6.5)
			\qbezier(1,2)(3,1)(5,2) 
			\qbezier(1,4)(3,5)(5,4) 
			\qbezier(1,2)(-0.5,3)(1,4) 
			\qbezier(2,3.2)(3,2.7)(4,3.2) 
			\qbezier(2.5,3)(3,3.2)(3.5,3) 
			\qbezier(5,2)(5.5,2.3)(6,2.4) 
			\qbezier(5,4)(5.5,3.7)(6,3.6) 
			\qbezier(13,2)(15,1)(17,2) 
			\qbezier(13,4)(15,5)(17,4) 
			\qbezier(17,2)(18.5,3)(17,4) 
			\qbezier(14,3.2)(15,2.7)(16,3.2) 
			\qbezier(14.5,3)(15,3.2)(15.5,3) 
			\qbezier(13,2)(12.5,2.3)(12,2.4) 
			\qbezier(13,4)(12.5,3.7)(12,3.6) 
			\qbezier(6,2.4)(9,2.4)(12,2.4) 
			\qbezier(6,3.6)(9,3.6)(12,3.6) 
			\qbezier(7,2.4)(6.7,3)(7,3.6) 
			\qbezier[30](7,2.4)(7.3,3)(7,3.6) 
			\qbezier(11,2.4)(10.7,3)(11,3.6) 
			\qbezier[30](11,2.4)(11.3,3)(11,3.6) 
			
			
			\put(2.8,2){$M_1$} 
			\put(14.8,2){$M_2$} 
			\put(6.2,2){$\underbrace{\hspace{43mm}}^{}$} \put(8,1){$[-2,2]\times Y$}  
			\put(1,5.0){$\overbrace{\hspace{59mm}}^{}$} \put(4.1,5.5){$\bm_1$}  
			\put(7.2,4){$\overbrace{\hspace{27mm}}^{}$} \put(8,4.5){$[-1,1]\times Y$}  
			\put(9.2,5.0){$\overbrace{\hspace{58mm}}^{}$} \put(12.9,5.5){$\bm_2$}  
			\put(1.5,0.5){$\underbrace{\hspace{113mm}}_{}$} \put(8.8,-0.5){$\bm$}  
		\end{picture}
		\caption{Elongating}
		\label{fig2}
	\end{figure}

    \begin{figure}[h]
    \setlength{\unitlength}{0.3cm}
		\begin{center}
			\begin{tikzpicture}
				\draw[->,thick] (-4,0) -- (4,0);
				\draw[->,thick] (0,-1.5) -- (0,1.5);
				\draw[domain=-4:-1] plot (\x,{-1});
				\draw[domain=-1:0] plot (\x,{(\x+1)*(\x+1)-1});
				\draw[domain=0:1] plot (\x,{-(\x-1)*(\x-1)+1});
				\draw[domain=1:4] plot (\x,{1});
				\draw [dashed] plot coordinates {(-1,-1)(0,-1)};
				\draw [dashed] plot coordinates {(1,1)(0,1)};
				\node at (0.5,-1) {$-T/2$};
				\node at (-0.5,1) {$T/2$};
				\draw [dashed] plot coordinates {(-1,1.4)(-1,-1.4)};
				\draw [dashed] plot coordinates {(1,1.4)(1,-1.4)};
				\node at (-2.5,0.6) {$M_1$};
				\node at (2.5,0.6) {$M_2$};
				\fill [black] (-1,0) circle (2pt);
				\fill [black] (1,0) circle (2pt);
				\node at (-1.3,0.3){$-1$};
				\node at (1.2,0.3){$1$};
			\end{tikzpicture}
		\end{center}
		\caption{Graph of $f_T$}
		\label{fig3}
	\end{figure}

	Let $d_T:=d^{\F}+df_T\wedge$, $d_T^{*}$ be the formal adjoint of $d_T$. Set $D_T:=d_T+d_T^{*}$, $\Delta_T:=D_T^2.$ 
	
	Let $\l_k$ be the $k$-th eigenvalue (counted with multiplicities) of $\Delta_1\oplus\Delta_2$. Let $\{\l_k(T)\}$ be the $k$-th eigenvalue (counted with multiplicities) of $\Delta_T$ (acting on $\Omega(\bm;\F)$).
	
In this paper, we show that (\Cref{eigencon})\be\label{ob1}\lim_{T\to\infty}\l_k(T)=\l_k.\ee
	
	Let $\mathcal{T}(g^{T\bm},h^{\F},\nabla^{\F})(T)$ be the analytic torsion with respect to $\Delta_T$. Then based on (\ref{ob1}), naively, one should expect that 
	\be\label{ideq}\lim_{T\to\infty}\log \mathcal{T}(g^{T\bm},h^{\F},\nabla^{\F})(T)``="\log \mathcal{T}_{1}(g^{TM_1},h^{F_1},\nabla^{F_1})+\log \mathcal{T}_{2}(g^{TM_2},h^{F_2},\nabla^{F_2}),\ee and $$\lim_{T\to 0}\log\mathcal{T}(g^{T\bm},h^{\F},\nabla^{\F})(T)=\log\mathcal{T}(g^{T\bm},h^{\F},\nabla^{\F}).$$ As a result, we can observe the relationship between the analytic torsion on \( M \) and that on the two pieces above, leading to Theorem \ref{main0}.

However, the convergence of eigenvalues alone does not suffice to establish \eqref{idea}. In \Cref{est1} (see also \Cref{rem53}), we establish a uniform polynomial-like lower bound for the eigenvalues of \(\Delta_T\), which, together with the eigenvalue convergence result, ensures the pointwise convergence of the heat supertrace.  To apply the dominated convergence theorem, uniform estimates for both the large-time and small-time heat supertrace are required. The uniform polynomial-like lower bound for the eigenvalues (\Cref{est1}) allows us to control the large-time heat supertrace directly. For the small-time heat supertrace, the problem reduces to analyzing a one-dimensional model using the finite propagation speed technique (see \Cref{conbd}). To study the one-dimensional model, we introduce coupling techniques (see \Cref{idea-coupling} for more details). Next, comparing the coupled and uncoupled heat supertraces leads to certain unknown constants, which we determine by studying Ray-Singer metrics on simple spaces such as \( S^1 \) and finite intervals. Finally, we analyze the heat supertrace associated with small eigenvalues and relate it to the Mayer-Vietoris sequence.

In what follows, we elaborate on some of these key ideas in greater detail.

	\begin{rem}
		Based on the preceding discussion, $T=0$ corresponds to $\theta=\pi/4$ in Vishik's theory of moving boundary problems  (c.f. \cite[\S 4.2]{lesch2013gluing}), and $T=\infty$ corresponds to $\theta=0$. Moreover, from the argument in the proof of Theorem \ref{eigencon}, let $u_k(T)$ be a unit $k$-th eigenform of $\Delta_T$, then there exists a sequence $\{T_l\}_{l=1}^\infty,T_l\to\infty$, such that $u_k(T_l)$ converges to a unit $k$-th eigenform of $\Delta_1\oplus\Delta_2.$
	\end{rem}

	\subsubsection{Coupling techniques}\label{idea-coupling}
	  Let $\zeta_T,\zeta_1$ and $\zeta_2$ be the zeta functions for $\Delta_T,\Delta_1$ and $\Delta_2$ respectively. Let
	\[\zeta_T^{\mL}(z):=\frac{1}{\Gamma(z)}\int_1^\infty t^{z-1}\Tr_s\qty(Ne^{-t\Delta_T'})dt,\]
	and
	\[\zeta_T^{\mS}(z):=\frac{1}{\Gamma(z)}\int_0^1 t^{z-1}\Tr_s\qty(Ne^{-t\Delta_T'})dt\]
    be large and small time components of zeta functions.
    Similarly, we have $\zeta_i^{\mL} $ and $\zeta_i^{\mS}$ ($i = 1, 2$).

The main challenge lies in the small-time component, where the limit \( \lim_{T\to\infty} \Tr_s(Ne^{-t\Delta_T'}) \) for \( t \in (0,1) \) is not uniform.  

Using the finite propagation speed technique (see \Cref{conbd}), it suffices to analyze the one-dimensional model \( \Tr_s(Ne^{-t\Delta^\R_{T}}) \), where \( \Delta^\R_T \) is the (Witten-deformed) Laplacian on the interval \( [-2,2] \) with appropriate boundary conditions. By coupling \( t \) and \( T \), we observe that the quantity \( \Tr_s(Ne^{-t\Delta^{\R}_{t^{-7}T}}) \) exhibits desirable behavior as \( T \to \infty \). Specifically, as \( T \to \infty \) (see  \eqref{lacon1}), we obtain  
\[
t^{-1} \Tr_s(Ne^{-t\Delta^{\R}_{t^{-7}T}}) - t^{-1} \Tr_s(Ne^{-t\Delta^{\R}_{1}}) - t^{-1} \Tr_s(Ne^{-t\Delta^{\R}_{2}}) \to 0 \quad \text{in } L^1(0,1]_t,
\]
where \( \Delta_1^\R \) and \( \Delta_2^\R \) are the usual Hodge Laplacians on the intervals \( [-2,-1] \) and \( [1,2] \), respectively, with suitable boundary conditions.  A similar coupling technique appears in the author's previous joint work with Xianzhe Dai \cite{DY2020index}.

We still need to compare the coupled and uncoupled heat traces. There are unknown terms arising from \( \Tr_s(N^{\R}e^{-t\Delta^{\R}_{t^{-7}T}}) - \Tr_s(N^{\R}e^{-t\Delta^{\R}_{T}}) \), which are independent of \( M \), \( Y \), and \( F \) (see \( \tilde{\zeta}_T'(0) \) and \( \tilde{\zeta}_{T,i}'(0) \) in \Cref{main1} and \Cref{main2}). To compute these terms explicitly, we study the Ray-Singer metrics associated with the simple spaces \( S^1 \) and \( [-2,2] \).

 Finally, for the large-time components: Using the convergence of eigenvalues (\ref{ob1}) and the uniform lower bound on eigenvalues (\Cref{est1}), we apply the dominated convergence theorem to establish that for the large-time components (\Cref{larcon}),  
\[
(\zeta^{\mL}_T)'(0) \to (\zeta_1^{\mL})'(0) + (\zeta_2^{\mL})'(0) \quad \text{as } T \to \infty,
\]
provided 
$
\dim H^k(M;F) = \dim H^k(M_1;F_1) + \dim H^k(M_2, \p M_2;F_2),
$
i.e., no small eigenvalues.	
	\subsubsection{Small eigenvalues and Mayer-Vietoris exact sequence}
	In the general case, \(\dim H^k(M;F) \neq \dim H^k(M_1;F_1) + \dim H^k(M_2, \partial M_2; F_2)\), and we must deal with the small nonzero eigenvalues. Note that the space generated by eigenforms corresponding to small eigenvalues of \(\Delta_T\), denoted by \(\Omega_{\text{sm}}(M,F)(T)\), is finite-dimensional for large values of \(T\). The third crucial component in our proof is the relationship between the analytic torsion for \(\Omega_{\text{sm}}(M,F)(T)\) and the analytic torsion for the Mayer-Vietoris exact sequence (\ref{mv000}) as \(T \to \infty\) (see \Cref{int6p}).
	
	\subsection{Organization}

In \cref{section1}, we provide a brief review of analytic torsion and establish the basic settings. In \cref{intr}, we state several important intermediate results (Theorems \ref{eigencon}–\ref{int6p}), from which Theorem \ref{main0} follows easily.  In \cref{eigen}, we prove the convergence of eigenvalues (Theorem \ref{eigencon}). In \cref{conl}, we establish the convergence of the large-time components of analytic torsion (Theorem \ref{larcon}). In \cref{glus}, we study the heat trace in the small-time regime and reduce the problem to a one-dimensional model. In \cref{cons}, we explore this one-dimensional model and show that when the time \( t \) and the deformation parameter \( T \) are suitably coupled, the behavior of the heat kernel on the tube can be understood. We then partially establish the convergence of the small-time components of analytic torsion (Theorem \ref{main}), leaving some unknown terms, which are computed in \cref{last}. This allows us to fully prove Theorem \ref{main}.  
 Finally, in \cref{lastreal}, we establish a relationship between the analytic torsion for small eigenvalues and the analytic torsion for the Mayer–Vietoris exact sequence (Theorem \ref{int6p}).

	\section{Preliminary}\label{section1}
In this section, we provide a brief review of Hodge theory in \cref{hodgere} and analytic torsion on both closed manifolds and manifolds with boundary in \cref{review-ana}. In \cref{aw}, we introduce a family of non-Morse functions and the Witten-deformed analytic torsion.
	\def\End{\mathrm{End}}
	\subsection{Conventions}
	From now on, we assume that $g^{TM}$ and $h^F$ are product-type near $Y$. That is, there exists a neighborhood $U$ of $Y$, such that $U= (-1,1)\times Y$, and let $(s,y)$ be its coordinate. Then $g^{TM}|_U=ds\otimes ds+g^{TY}$ for some metric $g^{TY}$ on $Y.$ Let $h^F_Y:=h^F|_{\{0\}\times Y}$. For any $v\in F_{(s,y)}$, let $P_{\gamma}\in \End(F_{(s,y)},F_{(0,y)})$ be the parallel transport associated with $\nabla^F$ for the path $\gamma(t)=(st,y),t\in[0,1]$, then we require that $h^F(v,v)=h^F_Y(P_\gamma v,P_\gamma v).$ Hence on $U$, \be\label{ddsh}\nabla^F_{\frac{\p}{\p s}}h^F=0.\ee
	
	All constants appearing in this paper are at least independent of $T$,   if $T$ is sufficiently large. The notations $C$ and $c$, et cetera, denote constants that may vary based on context.

We adopt the following convention throughout the paper:  
\begin{conv}\label{conv1}  
For any differential operator \( D \) acting on \( \Omega(M,F) \), and for \( w \in \Omega(M) \) and \( u \in \Omega(M,F) \), the expression \( Dw \wedge u \) is understood as \( D(w \wedge u) \).  
\end{conv}

	\subsection{A brief review on Hodge theory}\label{hodgere}
	Let $(X,g^{TX})$ be a compact manifold with boundaries $Y=\partial X$, where $Y$ could be an empty set. Let $F\to X$ be a flat vector bundle with flat connection $\nabla^F$, and $h^{F}$ be a Hermitian metric on $F$. We identify a neighborhood of $\partial X$ to $(-1,0] \times Y$, and let $(s,y)$ be its coordinates. Let $\Omega(X;F)$ denote the space of smooth $F$-valued differential forms.

	Set
	\begin{align*}\A_{\mathrm{abs}} (X;F)&:=\left\{\omega \in \Omega (X;F): i_{\frac{\partial}{\partial s}} \omega=0\mbox{ on }Y\right\},\\
		\A_{\mathrm{rel}} (X;F)&:=\left\{\omega \in \Omega (X;F): d s \wedge \omega=0 \text { on } Y\right\};\\
		\Omega_{\mathrm{abs}} (X;F)&:=\left\{\omega \in \Omega (X;F): i_{\frac{\partial}{\partial s}} \omega=0, i_{\frac{\partial}{\partial s}} d^{F} \omega=0\mbox{ on }Y\right\},\\
		\Omega_{\mathrm{rel}} (X;F)&:=\left\{\omega \in \Omega (X;F): d s \wedge \omega=0, d s \wedge d^{F, *} \omega=0 \text { on } Y\right\}.
	\end{align*}
	For the sake of convenience, ``bd" will be adopted to represent ``abs" or ``rel", when it is not necessary to distinguish the boundary conditions.

	Let $d^F:\Omega^*(X;F)\to \Omega^{*+1}(X;F)$ denote the covariant derivative with respect to $\nabla^F$. Let $\nabla$ be the connection on $\Lambda(T^*X)\otimes F$ induced by $\nabla^{F}$ and $g^{TX}$ (more precisely, induced by $\nabla^F$ and the Levi-Civita connection $\nabla^{TX}$ associated to $g^{TX}$). Assuming that $\{e_i\}$ is a local orthonormal frame of $TM$ and $\{e^i\}$ is its dual frame, then locally (c.f. \cite[(4.26)]{bismutzhang1992cm}),\be\label{locdf}d^F=\sum_ke^k\wedge\nabla_{e_i}.\ee  Let $\nabla^*$ be the dual connection of $\nabla$ , that is, for any $s_1,s_2\in\Gamma(\Lambda(T^*M)\otimes F),$ $d\lan s_1,s_2\ran=\lan\nabla s_1,s_2\ran+\lan s_1,\nabla^{*}s_2\ran$, where $\lan\cdot,\cdot\ran$ is the metric on $\Lambda(T^*M)\otimes F$ induced by $g^{TM}$ and $h^F.$ Let $d^{F,*}$  be the formal adjoint of $d^F$, then locally (c.f. \cite[(4.27)]{bismutzhang1992cm}) \be\label{locdfs} d^{F,*}:=-\sum_{i}i_{e_i}\nabla^{*}_{e_i}.\ee
	Let $d^{F}_{\bd}, d^{F,*}_{\bd}$ and $\Delta_{\bd}$ be the restrictions of $d^F,d^{F,*}$ and $\Delta$ to $\A_{\bd}(X; F)$ respectively. 
	
	Let $(\cdot,\cdot)_{L^2}$ be the $L^2$-inner product induced by $h^F$ and $g^{TX}$, that is, for $L^2$-forms $\a$ and $\b$, 
	$$(\a,\b)_{L^2}=\int_X\lan\a,\b\ran\dvol_X.$$
	Then integration by parts implies that
	\be\label{adjointness}(d_{\bd}^F\a,\b)_{L^2}=(\a,d_{\bd}^{F,*}\b)_{L^2}, \forall \a,\b\in \A_{\bd}(X;F).\ee

	
	
	For a closable operator $\So:\H\to\H$ on a Hilbert space $(\H,(\cdot,\cdot)_{\H})$ with a dense domain $\Dom(\So)$, define an inner product $(\cdot,\cdot)_{\So}$ on $\Dom(\So)$ by
	\[(\a,\b)_{\So}:=(\a,\b)_{\H}+(\So\a,\So\b)_{\H},\forall \a,\b\in\Dom(\So).\]
	
	Let $W_{\min}$ be the completion of $\Dom(\So)$ with respect to the norm $\|\cdot\|_{\So}$, then one can extend $\So$ to $\So_{\min}$ naturally with $\Dom(\So_{\min})=W_{\min}$. 
	
	Assume that there is another closable operator $\tS:\H\to\H$, such that $\Dom(\tS)=\Dom(\So)$ and
	\[(\So\a,\b)_{\H}=(\a,\tS\b)_{\H},\forall \a,\b\in\Dom(\So).\] 
	Let $$W_{\max}:=\{\a\in \H:|(\a,\tS\b)_{\H}|\leq M_{\a}\|\b\|_{\H}\mbox{ for some constant $M_\a>0$}, \forall \b\in\Dom(\tS) \}.$$ Because $\Dom(\So)$ is dense, the Riesz representation theorem states that $\gamma\in \H$ exists such that $(
	\gamma, \b)_{\H}=(\a, \tS\b)_{\H}. $ Then we define $\So_{\max}\a=\gamma.$ One can see easily that $\So_{\max}$ is nothing but the adjoint of $\tS.$
	
	By (\ref{adjointness}),  \cite[Proposition A.3]{DY2020cohomology} and the discussion above, one has
	\[\Im(d^F_{\bd,\min})\oplus\Im(d^{F,*}_{\bd,\min})=\Im(d^F_{\bd,\max})\oplus \Im(d^{F,*}_{\bd,\max}).\]

	Together with \cite[Theorem 1.1]{bruning2013gluing}, one has
	
	\begin{thm} [Hodge decomposition]\label{hodec}\ \\
		\begin{enumerate}[(1)]
			\item We have
			$$
			\ker{\Delta_{\bd}}=\ker\left(d^F\right) \cap \ker\left(d^{F, *}\right) \cap \Omega_{\mathrm{bd}}(X;F) .
			$$
			\item The vector space $\ker{\Delta_{\bd}}$ is finite-dimensional.
			\item  We have the following orthogonal decomposition
			$$
			\A_{\mathrm{bd}}(X; F) =\ker(\Delta_{\bd}) \oplus \Im (d_{\bd}^F)  \oplus \Im (d_{\bd}^{F, *}) .$$
			\begin{align*}
				L^2\A_{\bd}(X; F) &=\ker(\Delta_{\bd}) \oplus \Im (d_{\bd,\min}^F) \oplus \Im (d_{\bd,\min}^{F, *})\\
				&=\ker(\Delta_{\bd}) \oplus \Im (d_{\bd,\max}^F) \oplus \Im( d_{\bd,\max}^{F, *}).
			\end{align*}
			Here $L^2\A_{\bd}(X; F)$ is the completion of $\A_{\bd}(X,F)$ with respect to $(\cdot,\cdot)_{L^2}.$
			
		\end{enumerate}
	\end{thm}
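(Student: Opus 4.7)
The plan is to establish the three parts by combining a standard integration-by-parts argument for the characterization of harmonic forms with the already noted coincidence of the minimal and maximal extensions, and then to invoke the Brüning--Lesch result cited immediately before the statement for the orthogonal decomposition.

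First I would prove part (1). The inclusion $\ker(d^F)\cap\ker(d^{F,*})\cap\Omega_{\bd}(X;F)\subseteq\ker\Delta_{\bd}$ is immediate since $\Delta=d^Fd^{F,*}+d^{F,*}d^F$. For the reverse inclusion, pick $\omega\in\ker\Delta_{\bd}$. Because $\omega\in\Omega_{\bd}(X;F)$, the boundary conditions are precisely the ones needed to make both $d^F\omega$ and $d^{F,*}\omega$ lie in $\A_{\bd}(X;F)$; hence (\ref{adjointness}) applies to both pairings and gives
\begin{equation*}
0=(\Delta\omega,\omega)_{L^2}=(d^F\omega,d^F\omega)_{L^2}+(d^{F,*}\omega,d^{F,*}\omega)_{L^2}=\|d^F\omega\|_{L^2}^2+\|d^{F,*}\omega\|_{L^2}^2,
\end{equation*}
so both $d^F\omega=0$ and $d^{F,*}\omega=0$.

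For part (2), I would use that $\Delta_{\bd}$ is a second-order elliptic operator equipped with either the absolute or relative boundary condition, each of which is well known to be elliptic in the Lopatinski--Shapiro sense. Elliptic regularity up to the boundary then yields a compact resolvent on $L^2\A_{\bd}(X;F)$, so $\ker\Delta_{\bd}$ is finite-dimensional.

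For part (3), the smooth orthogonal decomposition of $\A_{\bd}(X;F)$ is the classical Hodge--Morrey--Friedrichs theorem in the flat-bundle setting: one writes $\omega=H\omega+\Delta_{\bd}G\omega=H\omega+d^F(d^{F,*}G\omega)+d^{F,*}(d^FG\omega)$, where $G$ is the Green operator associated with $\Delta_{\bd}$, and verifies that the three summands remain in $\A_{\bd}$ and are pairwise orthogonal by (\ref{adjointness}). For the two $L^2$ decompositions, the identity
\begin{equation*}
\Im(d^F_{\bd,\min})\oplus\Im(d^{F,*}_{\bd,\min})=\Im(d^F_{\bd,\max})\oplus\Im(d^{F,*}_{\bd,\max}),
\end{equation*}
noted just before the statement, together with Theorem 1.1 of \cite{bruning2013gluing} (which identifies the harmonic space of the Gaffney extension with the smooth harmonic space), reduces the two $L^2$ identities to the smooth case already handled. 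The main obstacle I anticipate is the $L^2$ part: pairwise orthogonality of the three summands is formal from the adjointness $(d^F_{\bd,\min})^*=d^{F,*}_{\bd,\max}$ and vice versa, but exhaustion of $L^2\A_{\bd}(X;F)$ requires the min/max coincidence above, which is exactly where the Brüning--Lesch machinery does the real work.
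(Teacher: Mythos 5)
Your proposal is correct and follows essentially the same route as the paper, which also reduces everything to the min/max coincidence (via (\ref{adjointness}) and the cited DY Proposition A.3) and to Br\"uning--Lesch Theorem 1.1; you merely spell out the routine parts (1) and (2) that the paper absorbs into the citation. One small inaccuracy: in part (1) the defining conditions for $\Omega_{\mathrm{abs}}$ say $\omega\in\A_{\mathrm{abs}}$ and $d^F\omega\in\A_{\mathrm{abs}}$ (not $d^{F,*}\omega\in\A_{\mathrm{abs}}$), and dually for $\Omega_{\mathrm{rel}}$; the membership of the remaining term in $\A_{\bd}$ is then a consequence of the product structure near $Y$ rather than part of the definition, so your phrase ``precisely the ones needed'' slightly misstates the logic, but the integration by parts still closes because the relevant boundary pairings vanish.
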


	\subsection{Analytic torsion for flat vector bundles}\label{review-ana}
	
	\subsubsection{On closed manifolds}\label{zetadef}

	Let $(M,g^{TM})$ be a closed Riemannian manifold, $F\to M$ be a flat vector bundle with a Hermitian metric $h^F$, and $\nabla^F$ be a flat connection on $F$. Let $d^F$ be the covariant differential on $\Omega(M;F)$ induced by $\nabla^F$, then we have a complex
	\be\label{cplxf} 0 \rightarrow \Omega^0(M;F) \stackrel{d^F}{\rightarrow} \Omega^1(M;F) \stackrel{d^F}{\rightarrow} \cdots \stackrel{d^F}{\rightarrow} \Omega^{\dim(M)}(M;F) \rightarrow 0.\ee
	Denote $H(M;F)$ to be the cohomology of this complex. One can see that $g^{TM}$ and $h^F$ induce an $L^2$-inner product $(\cdot,\cdot)_{L^2(M)}$ on $\Omega^*(M;F)$. 
	
	Let $d^{F,*}$ be the formal adjoint of $d^F$ with respect to metric $g^{TM}$ and $h^F$. Then the Hodge Laplacian $\Delta:\Omega(M;F)\to\Omega(M;F)$ is defined as
	\[\Delta:=(d^F+d^{F,*})^2.\]
	
	$\Omega(M;F)$ has a natural $\Z_2$ grading: 
	\[\Omega^+:=\oplus_{k \mbox{ is even }}\Omega^k(M;F),\Omega^-:=\oplus_{k \mbox{ is odd}}\Omega^k(M;F).\]
	For a trace class operator $A:\Omega(M;F)\to \Omega(M;F)$, $\Tr_s(A)$ denotes its supertrace. If $A$ has an integral kernel $a$, the pointwise supertrace of $a$ is denoted by $\tr_s(a)$. Let $N:\Omega(M;F)\to \Omega(M;F)$ be a linear operator, such that for $\a\in \Omega^k(M;F)$, $N\a=k\a.$ $N$ is called the number operator. Let $\P$ be the orthogonal projection onto $\ker(\Delta)$, $$\Delta':=\Delta|_{\Im(1-\P)}.$$

		The zeta function $\zeta$ for $\Delta$ is defined as
		\[\zeta(z):=\frac{1}{\Gamma(z)}\int_0^\infty t^{z-1}\Tr_s\qty(Ne^{-t\Delta'})dt.\]
		where $\Gamma$ is the Gamma function. The zeta function is well defined whenever $\Re(z)$ is large enough. And it could be extended to a meromorphic function on $\C$. Moreover, $\zeta$ is holomorphic at $0$.
		
		The well-known Ray-Singer analytic torsion $\mathcal{T}(g^{TM},h^F,\nabla^F)$ for the complex (\ref{cplxf}) is defined as
		\[\mathcal{T}(g^{TM},h^F,\nabla^F):=e^{\frac{1}{2}\zeta'(0)}.\]

	\subsubsection{On manifolds with boundary}\label{oabs}
	Let $(X,g^{TX})$ be a compact manifold with boundary $Y=\partial X$, $g^{T X}$ be a Riemannian metric on $X$. Let $F\to X$ be a flat vector bundle with flat connection $\nabla^F$, and $h^{F}$ be a Hermitian metric on $F$. We identify a neighborhood of $\partial X$ to $(-1,0] \times Y$. Let $(s,y)$ be its coordinates.
	
	Let $d^{F, *}$ be the formal adjoint of the de Rham operator $d^{F}$ with respect to the $L^{2}$ metric $(\cdot, \cdot)_{L^2(X)}$ induced from $h^F$ and $g^{TX}$.
	
	Set
	$$\Omega_{\mathrm{abs}} (X;F):=\left\{\omega \in \Omega (X;F): i_{\frac{\partial}{\partial s}} \omega=0, i_{\frac{\partial}{\partial s}} d^{F} \omega=0\mbox{ on }Y\right\},$$
	$$
	\Omega_{\mathrm{rel}} (X;F):=\left\{\omega \in \Omega (X;F): d s \wedge \omega=0, d s \wedge d^{F, *} \omega=0 \text { on } Y\right\}.
	$$
	We write $\Omega_{\mathrm{b d}} (X;F)$ for short if the choice of abs/rel is clear. 
	
	Let $\Delta_{bd}:=(d^F+d^{F,*})^2$ act on $\Omega_{\bd}(X;F)$. According to the Hodge theory, $\ker(\Delta_\bd)\cong H_\bd(X; F)$. Here $H_{\rel}(X;F):=H(X,\p X;F)$, and $H_{\abs}(X;F):=H(X;F)$.
	
	Let $\zeta_{\bd}$ be the zeta functions for $\Delta_{\bd} .$

	The analytic torsion $\T_{bd}(g^{TX},h^F,\nabla^F)$ is defined by
	$e^{ \frac{1}{2} \zeta_{\bd}^{\prime}(0)}.$
	
	In particular, let $Y\subset M$ be a hypersurface cutting $M$ into two pieces $M_1$ and $M_2$. Denote $g^{TM_i}$, $F_i$, $h^{F_i}$ and $\nabla^{F_i}$ to be the restriction of $g^{TM},F,h^F$ and $\nabla^F$ to $M_i$ respectively $(i=1,2)$. One can see that $g^{TM_i}$ and $h^{F_i}$ induce an $L^2$-inner product $(\cdot,\cdot)_{L^2(M_i)}$ on $\Omega^*(M_i;F_i)$. 
	
	Let $\Delta_1:=(d^{F_1}+d^{F_1,*})^2$ act on $\Omega_{\abs}(M_1;F_1)$, and
	$\Delta_2:=(d^{F_2}+d^{F_2,*})^2$ act on $\Omega_{\rel}(M_2;F_2)$.
	
	\begin{defn}\label{rsbd}
		We set $\zeta_i$ to be the zeta function for $\Delta_i$. And let
		
		$$\T_1(g^{M_1},h^{F_1},\nabla^{F_1}):=\T_{\abs}(g^{M_1},h^{F_1},\nabla^{F_1})=e^{\frac{1}{2}\zeta_1'(0)},$$ and $$\T_2(g^{M_2},h^{F_2},\nabla^{F_2}):=\T_{\rel}(g^{M_2},h^{F_2},\nabla^{F_2})=e^{\frac{1}{2}\zeta_2'(0)}.$$
	\end{defn}
	\subsection{Analytic torsion for Witten Laplacian and weighted Laplacian}\label{aw}

	Let $Y\subset M$ be a hypersurface cutting $M$ into two pieces $M_1$ and $M_2$. Denote $g^{TM_i}$, $F_i$, $h^{F_i}$ and $\nabla^{F_i}$ to be the restriction of $g^{TM},F,h^F$ and $\nabla^F$ to $M_i$ respectively$(i=1,2)$. 
	We identify a neighborhood of $Y=\p M_1$ in $M_1$ to $(-2,-1] \times Y$, and $\p M_1$ is identified with $\{-1\}\times Y$. Similarly,  we identify a neighborhood of $Y=\p M_2$ in $M_2$ to $[1,2) \times Y$, and $\p M_2$ is identified with $\{1\}\times Y$.  Let $\bm=M_1\cup[-1,1]\times Y\cup M_2$ (see Figure \ref{fig2}), and $\bar{F},h^{\bar{F}},g^{T\bm}$ and $\nabla^{\bar{F}}$ be the natural extensions of $F,h^F,g^{TM}$ and $\nabla^F$ to $\bm$. One can see that $g^{T\bm}$ and $h^{\F}$ induce an $L^2$-inner product $(\cdot,\cdot)_{L^2(\bm)}$ on $\Omega^*(\bm;\F)$.

	\def\Cn{\mathcal{C}}
	\def\cn{\delta}
	
	Let $f_T,T\geq0$ be a family of odd smooth functions on $[-2,2]$ (see Figure \ref{fig3}), such that 
	\begin{enumerate}[(a)]
		\item $f_T|_{[1,2]}\equiv T/2$,
		\item\label{b} $f_T|_{[1/2,1]}(s)=-T\rho\qty(e^{T^2}(1-s))(s-1)^2/2+T/2$ , where $\rho\in C^\infty([0,\infty))$, such that $0\leq\rho\leq1$, $\rho|_{[0,1/4]}\equiv0,$ $\rho|_{[1/2,\infty)}\equiv1,$ $|\rho'|\leq \cn_1$ and $|\rho''|\leq \cn_2$ for some universal constant $\cn_1$ and $\cn_2.$
		\item\label{c} $\Cn_1T\leq|f_T'|(s)\leq 2\Cn_1 T$, $|f_T''|\leq \Cn_2T$ for some universal constants $\Cn_1$ and $
		\Cn_2$ whenever $s\in[0,1/2].$
		
	\end{enumerate}

	\begin{lem}\label{constr}
		$f_T$ exists.  Moreover, it also satisfies
		\begin{enumerate}[(A)]
			\item\label{A} For $s\in(1-e^{-T^2},1)\cup(-1,-1+e^{-{T^2}})$,$|f_T'|(s)\leq \Cn_3T\left||s|-1\right|$ and $|f_T''|(s)\leq \Cn_4T$ for some universal constant $\Cn_3$ and $\Cn_4$. 
			\item\label{B} For $s\in(1,2)\cup(-2,-1)$, $f_T'(s)=f_T''(s)=0.$
			\item\label{C} For $s\in[1/2,1-e^{-T^2}]$, $f_T'(s)=-T(s-1),f_T''(s)=-T$ and for $s\in[-1+e^{-T^2},-1/2]$, $f_T'(s)=T(s+1),f_T''(s)=T.$
			\item\label{D} $f_T'\geq0.$
		\end{enumerate}  
	\end{lem}
	\begin{proof}
		Let $p_T$ be an odd function on $[-2,-1/4]\cup[1/4,2]$, such that
		\begin{itemize}
			\item $p_T|_{[1,2]}\equiv T/2$,
			\item $p_T|_{[1/4,1]}(s)=-T\rho\qty(e^{T^2}(1-s))(s-1)^2/2+T/2$ , where $\rho\in C_c^\infty([0,\infty))$, such that $0\leq\rho\leq1$, $\rho|_{[0,1/4]}\equiv0,$ $\rho|_{[1/2,\infty)}\equiv1,$ $|\rho'|\leq \cn_1$ and $|\rho''|\leq \cn_2$ for some universal constant $\cn_1$ and $\cn_2.$
		\end{itemize}
		
		It is clear that $p_T(s)=-T(s-1)^2/2+T/2$ if $s\in[1/4,1/2].$
		
		Fix an odd smooth function $q\in[-1/2,1/2]$, such that $q$ is strictly increasing, and $q|_{[1/4,1/2]}=-(s-1)^2/2+1/2.$
		
		Then we set
		\[f_T(s)=\begin{cases}
			p_T(s),\mbox{ if $s\in[1/4,2]$},\\
			Tq(s),\mbox{ if $s\in[0,1/2].$}
		\end{cases}\]
		
		It's easy to check that such $f_T$ satisfies (a), (b) and (c).
		
		Now (B), (C) and (D) are clear, we just need to check (A).
		
		For $s\in(1-e^{-T^2},1)$, by the product rule and the chain rule, 
		\[|p_T'(s)|=|T\rho'(e^{T^2}(1-s))e^{T^2}\cdot (s-1)^2/2-T\rho(e^{T^2}(1-s))(s-1)|\leq T(\delta_1+1)|s-1|,\]
		where the inequality follows from the fact that $|e^{T^2}\cdot(1-s)|\leq 1\leq2$ if $s\in(1-e^{-T^2},1).$ Similarly, we have estimates for the second derivatives.
	\end{proof}
	
	While the behavior of $f_T$ on $[-1,-1+e^{-T^2}]\cup[1-e^{-T^2},1]$ seems uncertain, it is noteworthy that their first two derivatives are controllable, and the intervals above are very small. As we can see in \eqref{addeq3}, only the first two derivatives of $f_T$ are involved in $\Delta_T$ and are considered throughout the discussions in this paper. Additionally, the term $e^{T^2}$ plays an important role in proving Lemma \ref{kx0}.
	
	We could regard $f_T$ as a function on $\bm.$ 
	Let $d_{T}:=d^{\bar{F}}+df_T\wedge.$ Then the Witten Laplacian $\Delta_T$ is the Hodge Laplacian with respect to $d_T$. 
	
	\begin{defn}\label{rsclt}
		We have a complex
		\be\label{cplxff} 0 \rightarrow \Omega^0(\bm,\bar{F}) \stackrel{d_T}{\rightarrow} \Omega^1(\bm,\bar{F}) \stackrel{d_T}{\rightarrow} \cdots \stackrel{d_T}{\rightarrow} \Omega^{\dim(M)}(\bm,\bar{F}) \rightarrow 0.\ee Denote $H(\bm,\bar{F})(T)$ to be the cohomology of this complex, and $\Delta_T$ to be the Hodge Laplacian for $d_T$. Let $\zeta_T$ be the zeta function for $\Delta_T$. Similarly, one could define Ray-Singer analytic torsion $\mathcal{T}(g^{T\bm},h^{\bar{F}},\nabla^{\bar{F}})(T):=e^{\frac{1}{2}\zeta_T'(0)}$ for the complex (\ref{cplxff}). 
	\end{defn}
	
	Lastly, for simplicity, $(\cdot,\cdot)_{L^2}$ (resp. $\|\cdot\|_{L^2}:=\sqrt{(\cdot,\cdot)_{L^2}}$) will be adopted to represent $(\cdot,\cdot)_{L^2(M)}$ (resp. $\|\cdot\|_{L^2(M)}:=\sqrt{(\cdot,\cdot)_{L^2(M)}}$) , $(\cdot,\cdot)_{L^2(\bm)}$ (resp. $\|\cdot\|_{L^2(\bm)}:=\sqrt{(\cdot,\cdot)_{L^2(\bm)}}$)  or $(\cdot,\cdot)_{L^2(M_i)}$(resp. $\|\cdot\|_{L^2(M_i)}:=\sqrt{(\cdot,\cdot)_{L^2(M_i)}}$)  ($i=1,2$), when the context is clear.

	\subsubsection{Witten Laplacian v.s. weighted Laplacian}\label{witwei}
	
	Instead of deforming the de Rham differential $d^{\F}$, we could also deform the metric $h^{\F}$: let $h^{\F}_T:=e^{-2f_T}h^{\F}$. Similarly, $g^{T\bm}$ and $h^{\F}_T$ induce an $L^2$-norm $(\cdot,\cdot)_{L^2(\bm),T}$ on $\Omega(\bm;\F).$ 
	
	Then the formal adjoint $d^{\F,*}_T$ of $d^{\F}$ for $(\cdot,\cdot)_{L^2(\bm),T}$ equals to $e^{f_T}d_T^*e^{-f_T}$. The weighted Laplacian $\tD_T:=d^{\F}d^{\F,*}_T+d^{\F,*}_Td^{\F}.$ Then $\tD_T=e^{f_T}\Delta_T e^{-f_T}.$ Let $l_k(T)$ be the $k$-th eigenvalue of $\tD_T$, then $l_k(T)=\l_k(T).$ Moreover, if \(u\) is an eigenform of \(\Delta_T\) with eigenvalue \(\lambda\), then \(e^{f_T}u\) is an eigenform of \(\tD_T\) with the same eigenvalue \(\lambda\).
	
	\subsubsection{Absolute/Relative boundary conditions for weighted Laplacian}
	Let $\bm_1:=M_1\cup [-1,0]\times Y$, $\bm_2:=M_2\cup[0,1]\times Y$ (see Figure \ref{fig2}), and $\F_i$ be the restriction of $\F$ on $\bm_i$ ($i=1,2$).
	Set
	$$\A_{\mathrm{abs}} (\bm_1;\F_1):=\left\{\omega \in \Omega (\bm_1;\F_1): i_{\frac{\partial}{\partial s}} \omega=0\mbox{ on }\{0\}\times Y\right\},$$
	$$\A_{\mathrm{rel}} (\bm_2;\F_2):=\left\{\omega \in \Omega (\bm_2;\F_2): d s \wedge \omega=0 \text { on } \{0\}\times Y\right\};$$
	$$\Omega_{\mathrm{abs}} (\bm_1;\F_1)_T:=\left\{\omega \in \Omega (\bm_1;\F_1): i_{\frac{\partial}{\partial s}} \omega=0, i_{\frac{\partial}{\partial s}} d^{\F_1} \omega=0\mbox{ on }\{0\}\times Y\right\},$$
	$$\Omega_{\mathrm{rel}} (\bm_2;\F_2)_T:=\left\{\omega \in \Omega (\bm_2;\F_2): d s \wedge \omega=0, d s \wedge d_T^{\F_2, *} \omega=0 \text { on } \{0\}\times Y\right\};$$
	$$\Omega_{\mathrm{abs}} (\bm_1;\F_1)(T):=\left\{\omega \in \Omega (\bm_1;\F_1): i_{\frac{\partial}{\partial s}} \omega=0, i_{\frac{\partial}{\partial s}} d_T \omega=0\mbox{ on }\{0\}\times Y\right\},$$
	$$\Omega_{\mathrm{rel}} (\bm_2;\F_2)(T):=\left\{\omega \in \Omega (\bm_2;\F_2): d s \wedge \omega=0, d s \wedge d_T^{*} \omega=0 \text { on } \{0\}\times Y\right\}.$$

	Let $\tD_{T,i}$ be the restriction of $\tD_T$ acting on $\Omega_{\bd}(\bm_i;\F_i)_T$ (here $\bd$ represents either $
\abs$ or $
\rel$ ), $\Delta_{T,i}$ be the restriction of $\Delta_T$ acting on $\Omega_{\bd}(\bm_i;\F_i)(T)$. Then by Hodge theory and the discussion above, $\ker(\Delta_{T,i})\cong\ker(\tD_{T,i})\cong H_{\bd}(\bm_i;\F_i)$.
	\begin{defn}\label{defn24} Similarly, one could define Ray-Singer analytic torsion $$\mathcal{T}_i(g^{T\bm_i},h^{\bar{F}_i},\nabla^{\bar{F}_i})(T):=e^{\frac{1}{2}\zeta_{T,i}'(0)}$$  for $\Delta_{T,i}$ (or $\tilde{\Delta}_{T,i}$). \end{defn}
	
    One can see that $g^{T\bm_i}$ and $h_T^{\F_i}$ (the restriction of $g^{T\bm}$ and $h^{\F}_T$ on $\bm_i$) induce an $L^2$-inner product $(\cdot,\cdot)_{L^2(\bm_i),T}$ on $\Omega^*(\bm_i;\F_i)$. For simplicity, $(\cdot,\cdot)_{L^2,T}$ (resp. $\|\cdot\|_{L^2,T}:=\sqrt{(\cdot,\cdot)_{L^2,T}}$) will be adopted to represent $(\cdot,\cdot)_{L^2(\bm),T}$ (resp. $\|\cdot\|_{L^2(\bm),T}:=\sqrt{(\cdot,\cdot)_{L^2(\bm),T}}$) , or $(\cdot,\cdot)_{L^2(\bm_i),T}$(resp. $\|\cdot\|_{L^2(\bm_i),T}:=\sqrt{(\cdot,\cdot)_{L^2(\bm_i),T}}$)  ($i=1,2$), when the context is clear.

	\section{Four Key Intermidiate Results}\label{intr}
In this section, we state some key intermediate results, from which Theorem \ref{main0} follows easily. The proofs of these intermediate results occupy the remaining sections.

	Let $\l_k(T)$ be the $k$-th eigenvalue for $\Delta_T$, $\l_k$ be the $k$-th eigenvalue of $\Delta_1\oplus\Delta_2$ acting on $\Omega_{\abs}(M_1;F_1)\oplus\Omega_{\rel}(M_2;F_2),$ 
	and $\tl_k(T)$ be the $k$-th eigenvalue of $\tD_{T,1}\oplus\tD_{T,2}$ acting on $\Omega_{\abs}(\bm_1;\F_1)_T\oplus\Omega_{\rel}(\bm_2;\F_2)_T$ (or equivalently, $k$-th eigenvalue of $\Delta_{T,1}\oplus\Delta_{T,2}$ acting on $\Omega_{\abs}(\bm_1;\F_1)(T)\oplus\Omega_{\rel}(\bm_2;\F_2)(T)$).
	\begin{thm}[Convergence of eigenvalues]\label{eigencon}
		We have $$\lim_{T\to\infty}\l_k(T)=\lim_{T\to\infty}\tl_k(T)=\l_k.$$
	\end{thm}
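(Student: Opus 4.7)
The strategy is the min--max principle: I pin each eigenvalue between matching upper and lower bounds against $\l_k$. For the upper bound, pick orthonormal eigenforms $\omega_0,\ldots,\omega_k$ of $\Delta_1\oplus\Delta_2$ realising the first $k+1$ eigenvalues. Since $f_T$ is locally constant on $M_1\cup M_2$ (equal to $\mp T/2$), the Witten Laplacian $\Delta_T$ coincides there with the ordinary Hodge Laplacian, so each $\omega_j$ extends by a smooth cutoff inside $M_i$ to a test form on $\bm$ (respectively on $\bm_1\sqcup\bm_2$). The test forms stay nearly $L^2$-orthonormal and have Rayleigh quotients within $o(1)$ of $\l_j$, so min--max yields $\limsup_T\l_k(T)\le\l_k$ and $\limsup_T\tl_k(T)\le\l_k$.

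For the lower bound, the key tool is an Agmon/IMS concentration estimate. Let $u_0(T),\ldots,u_k(T)$ be $L^2$-orthonormal eigenforms of $\Delta_T$ whose eigenvalues, along a subsequence, stay bounded. The Bochner--Lichnerowicz--Witten decomposition reads $\Delta_T=\Delta+|df_T|^2+A_T$ with $\|A_T\|=O(|f_T''|)=O(T)$ in the bulk of the tube. Using the estimates from \cref{aw}, namely $\Cn_1T\le|f_T'|\le 2\Cn_1T$ on $[0,1/2]\cup[-1/2,0]$ and the sharp bound $|f_T'|(s)\le\Cn_3T\bigl||s|-1\bigr|$ only in the window $\bigl||s|-1\bigr|\le e^{-T^2}$, an IMS partition-of-unity computation with a cutoff $\chi_{\mathrm{tube}}$ supported in the tube produces
\[
\langle\Delta_Tu,u\rangle_{L^2}\ \geq\ c\,T^2\,\|\chi_{\mathrm{tube}}u\|_{L^2}^2\ -\ C\,\|u\|_{L^2}^2.
\]
Applied to $u=u_j(T)$ this forces $\|\chi_{\mathrm{tube}}u_j(T)\|_{L^2}=O(T^{-1})$; standard elliptic regularity on $M_1\sqcup M_2$, where $\Delta_T=\Delta$, gives uniform $H^1$-bounds on the restrictions $u_j(T)|_{M_i}$. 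A diagonal subsequence $u_j(T_l)$ therefore converges in $L^2$ to a limit $u_j^\infty$ solving $\Delta u_j^\infty=\mu_ju_j^\infty$ weakly, with $\mu_j\le\liminf_T\l_j(T)$.

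The main obstacle is to identify the correct boundary behaviour of $u_j^\infty$ at $\p M_i=Y$. Near $s=-1$ the profile $f_T(s)\approx T(s+1)^2/2-T/2$ turns the tube Witten Laplacian, up to tangential operators on $Y$, into a harmonic oscillator of frequency $T$ in the $s$-direction, with a Hessian/Clifford shift of size $\pm T$ depending on whether a $ds$ factor is present: forms of the shape $\alpha(y)$ have tube operator $-\p_s^2+T^2(s+1)^2-T+\Delta_Y$ (ground-state energy $0$), whereas forms of the shape $ds\wedge\beta(y)$ have $-\p_s^2+T^2(s+1)^2+T+\Delta_Y$ (ground-state energy $2T$). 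Consequently, as $T\to\infty$, the bounded-eigenvalue sector near $s=-1$ is spanned by forms with no $ds$ factor; this forces the normal component $i_{\p/\p s}u_j(T)$ to vanish in $L^2$ at $\p M_1$ in the limit, giving the absolute boundary condition. Near $s=+1$ the profile flips sign and the roles of $df_T\wedge$ and $i_{\nabla f_T}$ interchange, so only forms with a $ds$ factor survive and $ds\wedge u_j^\infty=0$ on $\p M_2$, the relative condition. Hence $u_j^\infty\in\Omega_\abs(M_1;F_1)\oplus\Omega_\rel(M_2;F_2)$, and min--max for $\Delta_1\oplus\Delta_2$ yields $\l_k\le\max_j\mu_j\le\liminf_T\l_k(T)$. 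The argument for $\tl_k(T)$ is strictly simpler because the abs/rel boundary conditions are already imposed at $\{0\}\times Y$ from the start, so only one half of the collar needs the concentration analysis.
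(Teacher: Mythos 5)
Your high-level plan matches the paper's (min--max pinching, test forms for the upper bound, concentration and convergence for the lower bound), but the upper bound as you describe it does not close, and the lower bound step is heuristic where the paper must be quantitative. For the upper bound you propose to extend each eigenform $\omega_j$ of $\Delta_1\oplus\Delta_2$ ``by a smooth cutoff inside $M_i$'' to a test form on $\bm$. This fails: if $\chi$ vanishes in a collar of width $r$ near $\p M_i$, then $D(\chi\omega_j)=\chi D\omega_j+c(d\chi)\omega_j$ and the commutator term satisfies $\|c(d\chi)\omega_j\|^2\gtrsim r^{-1}$ because $\omega_j$ is not small near $\p M_i$, so the Rayleigh quotient is $\l_j+O(1)$ at best, not $\l_j+o(1)$; alternatively, extending the boundary trace $\omega_j(\mp1,y)$ as a constant in $s$ into the tube and then cutting off costs either $\int|f_T'|^2\chi^2\sim T^2$ from the potential or $\int|\chi'|^2\sim\sqrt{T}$ if $\chi$ is confined to a layer of width $T^{-1/2}$. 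The paper's map $Q_T$ resolves this by extending $v_1$ onto $[-1,0]\times Y$ as $\eta(-s)\,v_1(-1,y)\,e^{-f_T(s)-T/2}$: the weight $e^{-f_T}$ is annihilated by the normal Witten derivative $\p_s+f_T'$, so $D_TQ_Tu$ has only tangential ($D_Y$) derivatives times a decaying Gaussian on $[-1,-1/2]\times Y$ and the Rayleigh quotient error is $O(T^{-1/2})$. This Gaussian extension adapted to the weight is the missing ingredient, and it is the genuinely Witten-theoretic part of the argument.

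In the lower bound your harmonic-oscillator picture (ground-state energy $0$ on $ds$-free forms and $2T$ on $ds$-bearing forms near $s=-1$, and vice versa near $s=+1$) correctly identifies the limiting boundary conditions, but it is a model computation assuming $|f_T'|^2$ is exactly quadratic and $f_T''$ exactly constant; to conclude $i_{\p/\p s}u_j^\infty=0$ on $\p M_1$ one needs a quantitative trace bound $\int_Y|i_{\p/\p s}u_j(T)|^2(-1,y)\to 0$, which the paper obtains as Lemma \ref{limit2} by integration by parts rather than by diagonalizing the model operator. Also note your claimed $\|\chi_{\mathrm{tube}}u_j(T)\|_{L^2}=O(T^{-1})$ overshoots: since $|f_T'|$ vanishes at $s=\pm1$, the IMS lower bound cannot be $cT^2$ uniformly on the tube, and the actual rate (Lemma \ref{limit0}) is $\int_{-1}^1\int_Y|u_j(T)|^2\lesssim(\l+1)/\sqrt{T}$, i.e.\ $O(T^{-1/4})$ for the $L^2$ norm --- still enough, but it should be stated correctly. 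Finally, before invoking min--max for $\Delta_1\oplus\Delta_2$ you need that $u_0^\infty,\ldots,u_k^\infty$ span a $(k{+}1)$-dimensional space; this follows from $L^2$-orthonormality of the $u_j(T_l)$ together with the concentration estimate (so no mass escapes into the tube in the limit), but the step should be recorded, as the paper does via $\dim\mathrm{span}\{u_{j,\infty}\}_{j=1}^k=k$ and the norm identity in its display preceding the last inequality of the proof.
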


	Let $\delta>0$ denote half of the first nonzero eigenvalue of $\Delta_1\oplus\Delta_2$. Then by Theorem \ref{eigencon}, all eigenvalues of $\Delta_T$ inside $[0,\delta]$ converge to $0$ as $T\to\infty$,  and all eigenvalues of $\tD_{T,1}\oplus\tD_{T,2}$ inside $[0,\delta]$ are $0$  when $T$ is large enough. 
    
    Let $\Omega_{\sm}(\bm,\bar{F})(T)$ be the space generated by eigenforms for eigenvalues of $\Delta_T$ inside $[0,\delta]$,
	and $\P^{\delta}(T)$ be the orthogonal projection onto $\Omega_{\sm}(\bm,\bar{F})(T)$.
	
	Let $$\zeta_{T,\la}:=\frac{1}{\Gamma(z)}\int_0^\infty t^{z-1}\Tr_s\qty(Ne^{-t\Delta_T'}\qty(1-\P^\delta(T)))dt,$$
	$$\zeta_{T,\sm}:=\frac{1}{\Gamma(z)}\int_0^\infty t^{z-1}\Tr_s\qty(Ne^{-t\Delta_T'}\P^\delta(T))dt.$$
	For $i=1,2$, let
	\[\zeta_i^{\mL}(z):=\frac{1}{\Gamma(z)}\int_1^\infty t^{z-1}\Tr_s(Ne^{-t\Delta'})dt,\]
	\[\zeta_i^{\mS}(z):=\frac{1}{\Gamma(z)}\int_0^1 t^{z-1}\Tr_s(Ne^{-t\Delta'})dt.\]
	Then it is clear that $\zeta_i=\zeta_i^{\mL}+\zeta_i^{\mS}$. 
	
	Similarly, one can define $\zeta_{T,i}^{\mL}$, $\zeta_{T,i}^{\mS}$, $\zeta^{\mL}_{T,\la}$, and $\zeta^{\mS}_{T,\la}$ e.t.c.
	\begin{rem}
	  Here, \(\la/\sm\) refers to large/small eigenvalues, and \(\mL/\mS\) refers to large/small time.
	\end{rem}
	
	\begin{thm}[Convergence of the large-time components]\label{larcon}
		\[\lim_{T\to\infty} (\zeta^{\mL}_{T,\la})'(0)=\lim_{T\to\infty}\sum_{i=1}^2(\zeta_{T,i}^{\mL})'(0)=\sum_{i=1}^2(\zeta_i^{\mL})'(0).\]
		That is, 
		\begin{align*}&\ \ \ \ \lim_{T\to\infty}\int_1^\infty t^{-1}\Tr_s\qty(Ne^{-t\Delta_T'}(1-\P^\delta))dt\\
			&=\lim_{T\to\infty}\sum_{i=1}^2\int_1^\infty t^{-1}\Tr_s(Ne^{-t\Delta_{T,i}'})dt=\sum_{i=1}^2\int_1^\infty t^{-1}\Tr_s(Ne^{-t\Delta_i'})dt.
		\end{align*}
	\end{thm}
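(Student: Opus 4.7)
The plan is to reduce both equalities in Theorem~\ref{larcon} to pointwise convergence of the supertrace at each fixed $t\geq 1$, and then conclude by dominated convergence on $[1,\infty)$. Since the large-$t$ integrand decays exponentially once the $\ker(\Delta_T)$-contribution is removed, the integral $\int_1^\infty t^{z-1}(\cdot)\,dt$ is entire in $z$, and the factor $1/\Gamma(z)$ has a simple zero at $z=0$, so $(\zeta^{\mL})'(0)$ simplifies to $\int_1^\infty t^{-1}\Tr_s(Ne^{-t\Delta'})\,dt$ (and likewise for the other long-time zeta functions). Both equalities in the theorem therefore reduce to interchanging $\lim_{T\to\infty}$ with $\int_1^\infty t^{-1}(\cdot)\,dt$.

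For the pointwise convergence, observe that since $d_T$ and $d_T^*$ shift form degree, $\Delta_T$ preserves degree; writing $\Delta_T^{(p)}$ for its restriction to degree-$p$ forms and $\lambda_k^{(p)}(T)$ for its eigenvalues, one has
\[
\Tr_s\!\bigl(Ne^{-t\Delta_T'}(1-\P^\delta(T))\bigr)=\sum_{p=0}^{\dim M}(-1)^p\,p\!\!\sum_{\lambda_k^{(p)}(T)\geq\delta}\!\!e^{-t\lambda_k^{(p)}(T)}.
\]
Applying Theorem~\ref{eigencon} in each degree yields $\lambda_k^{(p)}(T)\to\lambda_k^{(p)}$; by the choice of $\delta$, every eigenvalue in $[0,\delta]$ tends to $0$ while every nonzero limit eigenvalue is $\geq 2\delta$, so for $T$ large the index set $\{\lambda_k^{(p)}(T)\geq\delta\}$ matches the positive spectrum of $\Delta_1\oplus\Delta_2$ in degree $p$. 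Termwise passage to the limit thus gives
\[
\Tr_s\!\bigl(Ne^{-t\Delta_T'}(1-\P^\delta(T))\bigr)\longrightarrow \Tr_s(Ne^{-t\Delta_1'})+\Tr_s(Ne^{-t\Delta_2'}),
\]
and the same argument, combined with the unitary equivalence $\Delta_{T,i}\cong\tilde{\Delta}_{T,i}$ from \cref{witwei}, gives $\sum_i\Tr_s(Ne^{-t\Delta_{T,i}'})\to\sum_i\Tr_s(Ne^{-t\Delta_i'})$.

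To justify interchanging the limit with the integral, I will use the dominator obtained from the semigroup property: since the spectrum of $\Delta_T'(1-\P^\delta(T))$ lies in $[\delta,\infty)$ for $T$ large, for $t\geq 1$,
\[
\bigl|\Tr_s\!\bigl(Ne^{-t\Delta_T'}(1-\P^\delta(T))\bigr)\bigr|\leq (\dim M)\,e^{-(t-1)\delta}\,\Tr\bigl(e^{-\Delta_T}\bigr),
\]
which is integrable against $t^{-1}$ on $[1,\infty)$ provided $\sup_T\Tr(e^{-\Delta_T})<\infty$. This uniform heat-trace bound is the main obstacle, since the lower-order perturbations distinguishing $\Delta_T$ from $\Delta$ carry explicit powers of $T$. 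I plan to establish it from the structural observation that $d_T=d^{\F}$ and hence $\Delta_T=\Delta$ on $M_1\cup M_2$ (where $f_T$ is locally constant), so the heat trace splits into a $T$-independent bulk contribution plus a tube contribution; on the tube, the quadratic potential $|df_T|^2\sim T^2$ together with a min-max comparison against a $T$-independent lower-bound operator forces almost all tube spectrum up to infinity, leaving $\Tr(e^{-\Delta_T})\leq C$ uniformly in $T\geq 1$. Combining the uniform dominator with the pointwise convergence closes the argument and yields both equalities simultaneously.
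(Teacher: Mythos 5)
Your overall strategy matches the paper's: apply dominated convergence on $[1,\infty)$ to the integrand $t^{-1}\Tr_s(Ne^{-t\Delta_T'}(1-\P^\delta(T)))$, using Theorem~\ref{eigencon} for pointwise (in $t$) convergence of the heat supertrace. The reduction of the $\zeta'(0)$ statements to the plain integrals via the zero of $1/\Gamma(z)$ at $z=0$ is also correct and matches the way the theorem is restated.

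The gap is in your production of the dominating function, specifically the claim $\sup_{T\geq T_0}\Tr(e^{-\Delta_T})<\infty$. Your sketch---"the heat trace splits into a $T$-independent bulk contribution plus a tube contribution"---is not a real decomposition: the heat kernel on $\bm$ does not factor along the interface, and the off-diagonal coupling is precisely what makes estimates of this type nontrivial. What you almost certainly have in mind, and what the paper actually does, is \emph{Dirichlet/Neumann bracketing}: impose boundary conditions at the tube ends $\{\pm 1\}\times Y$ to get $\l_{k,D}(T)\geq\l_k(T)\geq\l_{k,N}(T)$ (the paper's (\ref{ineq})), reduce to a one-dimensional model on the tube, and show via a change of variables $\sqrt{T}(s+1)\to\tilde s$ that the model Witten Laplacian is bounded below by a $T$-scaled harmonic oscillator. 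This is the content of the two lemmas preceding Lemma~\ref{est1}, yielding an explicit $T$-independent lower bound $\l_k(T)\geq u_k(1)$ with $\sum_k e^{-u_k(1)}<\infty$. Without that lemma your dominator is unjustified; with it, your argument becomes the paper's (the paper then sets $F(t)=\dim(M)\sum_k e^{-t\max\{u_k(1),\delta\}}$ and applies dominated convergence directly, rather than factoring through the semigroup as you do, but these are equivalent once the eigenvalue bound is in hand).

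A smaller point: the "termwise passage to the limit" for the infinite spectral sum at fixed $t$ also needs a dominated-convergence (or uniform-tail) argument over $k$; the same $u_k(1)$-bound furnishes it, but you should say so since Theorem~\ref{eigencon} by itself gives only convergence of each individual $\l_k(T)$, not control of the tail of the sum.
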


	\begin{thm}[Convergence of the small-time components]\label{main}
		As $T\to\infty,$
		\[(\zeta^{\mS}_{T,\la})'(0)=\sum_{i=1}^2(\zeta_i^{\mS})'(0)-\left(T-\log (2)\right)\chi(Y)\rank(F)+o(1),\]
		\[(\zeta^{\mS}_{T,i})'(0)=(\zeta_i^{\mS})'(0)-T\chi(Y)\rank(F)/2+o(1).\]
		
		Thus, as $T\to\infty,$
		\[(\zeta^{\mS}_{T,\la})'(0)-\sum_{i=1}^2(\zeta^{\mS}_{T,i})'(0)=\log(2)\chi(Y)\rank(F)+o(1).\]
		
	\end{thm}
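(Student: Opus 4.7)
My plan, following the outline in \cref{idea}, is to exploit the coupling trick: for $t\in(0,1]$ the heat supertrace $\Tr_s(Ne^{-t\Delta_T'})$ is a priori uncontrolled as $T\to\infty$, but the substitution $T\mapsto t^{-7}T$ yields, per \cref{coups}, the clean limit $\Tr_s(Ne^{-t\Delta_1'})+\Tr_s(Ne^{-t\Delta_2'})$. I will attack the two asymptotic formulas in Theorem \ref{main} separately rather than their difference, because each admits a clean geometric origin localised on the tube, and subtract only at the end.

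The first step is a Duhamel/Agmon localisation reducing the computation of both $(\zeta^{\mS}_{T,\la})'(0)$ and $(\zeta^{\mS}_{T,i})'(0)$ to a computation on the tube $[-2,2]\times Y$ (respectively on one of its halves with abs or rel conditions at $\{0\}\times Y$). Because $g^{T\bm}$ and $h^{\F}$ are product-type there and $f_T(s,y)$ depends only on $s$, the relevant Witten Laplacian decomposes as $H_T\otimes 1+1\otimes \Delta_{Y,F}$, with $H_T$ the one-variable Witten Laplacian associated to the profile of $f_T$ from \cref{aw}. The heat trace factors on a product, and after the standard $N$-weighted supertrace cancellation only the zero-form sector survives on the $Y$-factor, producing a factor $\chi(Y)\rank(F)$. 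Combined with the coupling limit this gives
\begin{align*}
(\zeta^{\mS}_{T,\la})'(0) &= \sum_{i=1}^2(\zeta_i^{\mS})'(0) + a(T)\,\chi(Y)\rank(F) + o(1),\\
(\zeta^{\mS}_{T,i})'(0) &= (\zeta_i^{\mS})'(0) + b_i(T)\,\chi(Y)\rank(F) + o(1),
\end{align*}
for scalar functions $a(T),b_i(T)$ determined by the one-dimensional model and independent of $M$, $Y$, and $F$.

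It remains to identify $a(T)$ and $b_i(T)$. Because they are independent of $M$, $Y$, and $F$, they can be pinned down by testing on any convenient example. Taking $M=S^1$ with the two-point splitting, the tube model is exactly $[-2,2]$ and $Y$ is a pair of points, reducing everything to the Ray--Singer torsion of $[-2,2]$ with abs/abs versus the torsion of $[-2,0]\sqcup[0,2]$ with abs/abs and abs/rel; this is carried out explicitly in \cref{last}. The outcome is $a(T)=-(T-\log 2)$ and $b_i(T)=-T/2$, whose linear $T$-dependence reflects the standard Witten rescaling $s\mapsto u/\sqrt{T}$ of the one-dimensional oscillator and the ensuing $\log\sqrt{T}$ shift in each determinant. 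The main obstacle will be justifying the localisation and product-structure reduction \emph{uniformly} in the coupled regime $(t,T)\in(0,1]\times[T_0,\infty)$: one needs Agmon-type off-diagonal heat kernel bounds strong enough that non-product corrections on the smoothed corners $\{|s|=1\}$ and off-tube contributions are not merely $O(1)$ but $o(1)$ after weighting by $N$ and integrating against $t^{z-1}$ near $z=0$. The $t^{-7}$ coupling exponent is precisely what buys these uniform estimates.
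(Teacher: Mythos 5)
Your proposal follows essentially the same route as the paper: localise to the tube via heat kernel decay, factor the heat trace on the product $[-2,2]\times Y$, couple $t$ and $T$ via $T\mapsto t^{-7}T$ to control the small-time trace, and pin down the resulting universal $T$-dependent constants by computing on the one-dimensional models $S^1$ and $[-2,2]$. Two points of emphasis differ slightly from the paper's argument and are worth flagging. First, you attribute the $t^{-7}$ coupling to justifying the localisation uniformly in $(t,T)$, but the localisation itself is handled by finite propagation speed (Lemmas \ref{speed0}--\ref{speed}, \ref{heatm}, \ref{conbd}) with bounds already uniform in $T$; the coupling is instead what makes the one-dimensional model trace $\Tr_s(N^\R e^{-t\Delta^\R_{t^{-7}T}})$ converge in a dominated way (Propositions \ref{heat1form} and \ref{lacon}), which is a genuinely separate ingredient. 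Second, the factor $\chi(Y)\rank(F)$ does not come from ``only the zero-form sector surviving on the $Y$-factor''; it is $\Tr_s(e^{-t\Delta_Y})=\chi(Y)\rank(F)$ multiplying the $N^\R$-weighted one-dimensional trace, while the $N^Y$-weighted contribution vanishes because $\Tr_s(e^{-t\Delta^\R_T})=0$ owing to the oddness of $f_T$ (equation \eqref{heatm2}); this algebraic cancellation is essential and should be made explicit.
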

	
	Next, we have the following Mayer-Vietoris exact sequence (c.f. \cite[(0.16)]{bruning2013gluing})
	\be\label{mv}
	\mathcal{MV}:\cdots \stackrel{\p_{k-1}}\rightarrow H_{\text {rel }}^k\left(\bm_2;\F_2\right)  \stackrel{e_k}{\rightarrow}H^k\left(\bm;\F\right)  \stackrel{r_k}{\rightarrow} H_{\mathrm{abs}}^k\left(\bm_1;\F_1\right)  \stackrel{\p_k}{\rightarrow} \cdots .
	\ee
	Let $\H(\bm;\F)(T):=\ker(\tD_{T})$, and $\H(\bm_i;\F_i)(T):=\ker(\tD_{T,i})$. 
	We also have the following Mayer-Vietoris exact sequence induced by Hodge theory and (\ref{mv})
	\be\label{mvp}
	\mathcal{MV}(T):\cdots \stackrel{\p_{k-1,T}}\rightarrow \H^k\left(\bm_2;\F_2\right)(T)  \stackrel{e_{k,T}}{\rightarrow}\H^k\left(\bm;\F\right)(T)  \stackrel{r_{k,T}}{\rightarrow} \H^k\left(\bm_1;\F_1\right)(T)  \stackrel{\p_{k,T}}{\rightarrow} \cdots 
	\ee
	with metric induced by harmonic forms for $\tilde{\Delta}_{T}$ and $\tilde{\Delta}_{T,i},i=1,2.$ Let $\mathcal{T}(T)$ be the analytic torsion for this complex.

	Recall that $$\zeta_{T,\sm}:=\frac{1}{\Gamma(z)}\int_0^\infty t^{z-1}\Tr_s(Ne^{-t\Delta_T'}\P^\delta(T))dt.$$
	And set $\T_{\sm}(g^{T\bm},h^{\F},\nabla^{\F})(T):=e^{\half\zeta_{T,\sm}'(0)}.$

The following theorem describes the relationship between the analytic torsion for small eigenvalues and the analytic torsion for the Mayer–Vietoris exact sequence, which will be proved in \cref{lastreal}.
	\begin{thm}\label{int6p}
		$\lim_{T\to\infty}\log\T_{\sm}(g^{T\bm},h^{\F},\nabla^{\F})(T)-\log\T(T)=0.$
	\end{thm}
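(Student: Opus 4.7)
The plan is to realize both $\T_{\sm}(T)$ and $\T(T)$ as invariants of finite-dimensional complexes and to exhibit an asymptotic chain-level comparison between them. First I would invoke the isospectral identification $u\mapsto e^{f_T}u$ of \cref{witwei}: this is a unitary from $(L^2\Omega(\bm;\F),(\cdot,\cdot)_{L^2})$ to $(L^2\Omega(\bm;\F)(T),(\cdot,\cdot)_{L^2,T})$, it intertwines $d_T$ with $d^{\F}$, and it maps $\Omega_{\sm}(\bm,\F)(T)$ isometrically onto the small-eigenvalue subspace $V(T)$ of $\tD_T$. Consequently $\log\T_{\sm}(g^{T\bm},h^{\F},\nabla^{\F})(T)$ equals the analytic torsion of the finite complex $(V(T),d^{\F})$ equipped with $(\cdot,\cdot)_{L^2,T}$, while $\log\T(T)$ is computed from the same metric on the spaces $\H(\bm_i;\F_i)(T)$ appearing in $\mathcal{MV}(T)$. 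Theorem \ref{eigencon} guarantees that $\dim V^k(T)=\dim\H^k(\bm_1;\F_1)(T)+\dim\H^k(\bm_2;\F_2)(T)$ in every degree, so it suffices to construct an asymptotic isometry $\Phi_T:\bigoplus_i\H(\bm_i;\F_i)(T)\to V(T)$ and to identify the pulled-back differential $\Phi_T^{-1}\circ d^{\F}\circ\Phi_T$ with the Mayer-Vietoris connecting maps $\p_{k,T}$ in the $T\to\infty$ limit.

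I would build $\Phi_T$ by cut-off and spectral projection. Choose smooth cutoffs $\chi_i$ supported in $\bm_i$ and equal to $1$ outside a thin neighborhood of $\{0\}\times Y$, and set $\Phi_T(\omega_1,\omega_2):=P_{V(T)}(\chi_1\omega_1+\chi_2\omega_2)$, where $P_{V(T)}$ denotes $(\cdot,\cdot)_{L^2,T}$-orthogonal projection. Agmon-type decay estimates, applied uniformly on the eigenspaces of $\tD_{T,i}$ with eigenvalues in $[0,\delta]$, should yield two conclusions: first, under the exponential weight $e^{-2f_T}$ the harmonic forms $\omega_i$ concentrate away from the interface $\{0\}\times Y$, so $(1-\chi_i)\omega_i$ has $(\cdot,\cdot)_{L^2,T}$-norm $o(1)$; second, since $\tD_T(\chi_1\omega_1+\chi_2\omega_2)$ is supported near $\{0\}\times Y$ with $(\cdot,\cdot)_{L^2,T}$-norm also $o(1)$, the spectral gap above $\delta$ forces $(\Id-P_{V(T)})(\chi_1\omega_1+\chi_2\omega_2)$ to be $o(1)$ as well. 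Combined with the dimension match, this makes $\Phi_T$ a linear isomorphism that is asymptotically unitary in $(\cdot,\cdot)_{L^2,T}$.

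The final step is to identify the induced chain structure. Because $d^{\F}\omega_i=0$ on $\bm_i$, the form $d^{\F}\Phi_T(\omega_1,\omega_2)$ is, up to $o(1)$ projection errors, equal to $d\chi_1\wedge\omega_1+d\chi_2\wedge\omega_2$, a form localized near $\{0\}\times Y$ and built solely from the traces of $\omega_i$ on $Y$; after pulling back by $\Phi_T^{-1}$ and passing to cohomology this recovers the algebraic formula for the connecting homomorphisms $\p_{k,T}$ of $\mathcal{MV}(T)$, while the tautological components of $\Phi_T$ reproduce $e_{k,T}$ and $r_{k,T}$. The conclusion then follows from the general continuity of analytic torsion under asymptotically isometric chain isomorphisms of finite complexes. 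The main obstacle is the Agmon-type analysis: making the decay estimates uniform across the entire small-eigenvalue subspace and tracking errors in $d^{\F}$ rather than only in $(\cdot,\cdot)_{L^2,T}$ requires precise integration-by-parts on the collars $[-1,0]\times Y$ and $[0,1]\times Y$ under the exponentially varying weight $e^{-2f_T}$, with errors sharp enough to survive comparison of the induced chain structures.
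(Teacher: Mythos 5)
The analytic core of your plan --- cut off the $\tD_{T,i}$-harmonic forms away from $\{0\}\times Y$, project into the small-eigenvalue subspace, and use weighted decay plus the spectral gap to show this map is asymptotically unitary --- is close in spirit to what the paper does (it uses the extension $\Q_T = e^{f_T}Q_T$ followed by $\tP^{\delta}(T)$, with the relevant decay supplied by Lemma \ref{limit1} and Lemma \ref{limit0}, rather than Agmon estimates). But your closing step contains a genuine gap. You say the conclusion ``follows from the general continuity of analytic torsion under asymptotically isometric chain isomorphisms of finite complexes.'' That principle would only compare $\log\T_{\sm}(T)$ with the torsion of the transported complex $\bigl(\bigoplus_i\H(\bm_i;\F_i)(T),\Phi_T^{-1}d^{\F}\Phi_T\bigr)$. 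However $\log\T(T)$ is, by definition, the torsion of the \emph{acyclic} Mayer--Vietoris sequence $\mathcal{MV}(T)$, which is built from three families of spaces $\H^k(\bm_2)(T),\H^k(\bm)(T),\H^k(\bm_1)(T)$ and all three maps $e_{k,T},r_{k,T},\p_{k,T}$, not from a two-term differential. An asymptotic unitary of the total space does not by itself bridge these two quantities.

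What is missing is the algebraic mechanism that the paper uses. One must realize $\tO_{\sm}(\bm,\F)(T)$ not merely as a direct sum but as the middle term of a \emph{short exact sequence of complexes}
\[
0\to \H(\bm_2;\F_2)(T)\xrightarrow{\te_{\cdot,T}}\tO_{\sm}(\bm,\F)(T)\xrightarrow{\tir_{\cdot,T}}\H(\bm_1;\F_1)(T)\to 0
\]
with the zero differential on the outer terms, verify that $\te$ and $\tir$ are chain maps (Proposition \ref{exapp}) and exact (Theorem \ref{exathm}), and check that the associated long exact sequence in cohomology coincides with $\mathcal{MV}(T)$. Only then does Milnor's multiplicativity of torsion \cite[Thm.\ 3.1--3.2]{milnor1966whitehead}, in the metric form of \cite[Prop.\ 2.6]{lesch2013gluing}, produce the identity $\log\T_{\sm}(T)-\log\T(T)=\text{(metric-compatibility defect of the SES)}$, which Proposition \ref{last8p} then shows vanishes as $T\to\infty$. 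The key algebraic citation, and the verification that your $\Phi_T$ (or its two ``halves'') actually gives a chain-level short exact sequence compatible with $d^{\F}$, is what your proposal leaves out, and it cannot be replaced by the statement you invoke.

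One secondary point: the paper's extension operator is not a bare cutoff. On the collar it inserts the exponential weight $e^{\mp f_T(s)-T/2}$, which is what makes $D_T Q_T(u)$ reduce to a tangential operator applied to boundary traces (equation (\ref{qtdt})) and is crucial for the $O(T^{-1/4})$ estimate in Proposition \ref{last1p}. A plain cutoff $\chi_i\omega_i$ will produce a normal-derivative error of size $|d\chi_i|\cdot|\omega_i|$ near $\{0\}\times Y$; whether this is $o(1)$ in $\|\cdot\|_{L^2,T}$ uniformly over the small-eigenvalue subspace requires the same careful localization you flag as the main obstacle, and the choice of weighted extension is the device that makes it tractable.
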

	
	Let $\T_i(g^{T\bm_i},h^{\F_i},\nabla^{\F_i})(T)$ be the analytic torsion associated with $\tilde\Delta_{T,i}$, then it follows from the anomaly formula \cite[Theorem 0.1]{bismutzhang1992cm} and \cite[Theorem 0.1]{bruning2006anomaly} that
	
	\begin{thm}\label{int1p}
		\begin{align*}
			&\ \ \ \ \log\T(g^{T\bm},h^{\F},\nabla^{\F})(T)-\sum_{i=1}^2\log\T_i(g^{T\bm_i},h^{\F_i},\nabla^{\F_i})(T)-\log\T(T)\\
			&=\log\T(g^{TM},h^{F},\nabla^{F})-\sum_{i=1}^2\log\T_i(g^{TM_i},h^{F_i},\nabla^{F_i})-\log\T.
		\end{align*}
	\end{thm}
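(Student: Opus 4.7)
The plan is to reduce Theorem \ref{int1p} to a direct application of the Ray--Singer anomaly formulas of Bismut--Zhang \cite[Theorem 0.1]{bismutzhang1992cm} for closed manifolds and Brüning--Ma \cite[Theorem 0.1]{bruning2006anomaly} for manifolds with boundary, exploiting the product-type assumption from \cref{section1}. First I would use \cref{witwei} to transport the Witten deformation into a deformation of the Hermitian metric: since $\tD_T=e^{f_T}\Delta_T e^{-f_T}$ and multiplication by $e^{-f_T}$ is an isometry $L^2\Omega(\bm;\F)\to L^2\Omega(\bm;\F)(T)$, the two operators are unitarily equivalent, so $\T(g^{T\bm},h^{\F},\nabla^{\F})(T)=\T(g^{T\bm},h^{\F}_T,\nabla^{\F})$, and likewise $\T_i(g^{T\bm_i},h^{\F_i},\nabla^{\F_i})(T)=\T_i(g^{T\bm_i},h^{\F_i}_T,\nabla^{\F_i})$. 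In this form the entire $T$-dependence on the LHS is concentrated in a smooth deformation $h^{\F}\to h^{\F}_T=e^{-2f_T}h^{\F}$ of the Hermitian metric, with $g^{T\bm}$ and $\nabla^{\F}$ unchanged.

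Next I would fix a diffeomorphism $\phi:\bm\to M$ which is the identity outside the inserted cylinder $[-1,1]\times Y$ and transport $(g^{TM},h^F,\nabla^F)$ to $\bm$ via $\phi^*$. On this common underlying manifold, both $(g^{T\bm},h^{\F}_T,\nabla^{\F})$ and $(\phi^*g^{TM},\phi^*h^F,\nabla^{\F})$ are available with compatible restrictions to $\bm_1$ and $\bm_2$, and with compatible identifications of the Mayer--Vietoris sequences \ref{mv} and \ref{mvp}. Applying the anomaly formulas along a smooth path of Riemannian and Hermitian data between these two sets of data produces
\begin{align*}
\log\T(g^{T\bm},h^{\F}_T,\nabla^{\F})-\log\T(g^{TM},h^F,\nabla^F) &= \mathcal{A}(\bm)+\mathcal{C}(\bm),\\
\log\T_i(g^{T\bm_i},h^{\F_i}_T,\nabla^{\F_i})-\log\T_i(g^{TM_i},h^{F_i},\nabla^{F_i}) &= \mathcal{A}(\bm_i)+\mathcal{C}(\bm_i)+\mathcal{B}(\bm_i),
\end{align*}
where $\mathcal{A}$ denotes the Bismut--Zhang interior local integral, $\mathcal{C}$ the Hodge-metric anomaly on cohomology, and $\mathcal{B}$ the Brüning--Ma geometric boundary term supported on $\{0\}\times Y$.

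Under the product-type hypothesis the boundary contribution $\mathcal{B}(\bm_i)$ reduces to $\int_Y B(g^{TM})$ together with related boundary invariants and vanishes, while the interior densities satisfy $\mathcal{A}(\bm)=\mathcal{A}(\bm_1)+\mathcal{A}(\bm_2)$ by locality and the decomposition $\bm=\bm_1\cup\bm_2$. Substituting into the alternating combination $\log\T(g^{T\bm},\cdot)(T)-\sum_i\log\T_i(g^{T\bm_i},\cdot)(T)-\log\T(g^{TM},\cdot)+\sum_i\log\T_i(g^{TM_i},\cdot)$ leaves only $\mathcal{C}(\bm)-\mathcal{C}(\bm_1)-\mathcal{C}(\bm_2)$. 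The content of the theorem is that this equals $\log\T(T)-\log\T$, which follows because $\T(T)$ and $\T$ are the algebraic torsions of the Mayer--Vietoris complexes \ref{mvp} and \ref{mv} equipped with $L^2$-Hodge metrics induced respectively by $h^{\F}_T$ and $h^F$, so their ratio is precisely the alternating product of the Hermitian-metric transition determinants on the three Hodge spaces $H^*(\bm;\F)$, $H^*_{\mathrm{abs}}(\bm_1;\F_1)$ and $H^*_{\mathrm{rel}}(\bm_2;\F_2)$.

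The main obstacle will be this last algebraic step: showing that the alternating sum of Hermitian-metric anomalies on the three Hodge spaces reassembles, with the correct signs prescribed by the anomaly formulas, into the torsion of the metric change along the Mayer--Vietoris complex. This is a manifestation of the multiplicativity of torsion for short exact sequences of finite-dimensional Hermitian complexes, but it requires carefully matching the identifications of $H^*(\bm;\F)$ with $H^*(M;F)$ and of $H^*_{\bd}(\bm_i;\F_i)$ with $H^*_{\bd}(M_i;F_i)$ via $\phi^*$ so that \ref{mv} and \ref{mvp} are compared consistently. Once these identifications are fixed, the product-type structure kills the geometric anomalies and the desired identity falls out formally.
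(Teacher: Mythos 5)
Your approach matches what the paper intends: the paper's own ``proof'' is the single clause ``it follows from anomaly formula [BZ Thm.~0.1] and [BM Thm.~0.1],'' and you have filled in the natural expansion of that clause. The translation from the Witten Laplacian $\Delta_T$ to the weighted Laplacian $\tD_T$ via $\tD_T=e^{f_T}\Delta_T e^{-f_T}$ (so that $\T(g^{T\bm},h^{\F},\nabla^{\F})(T)=\T(g^{T\bm},h^{\F}_T,\nabla^{\F})$) is exactly the mechanism set up in \cref{witwei}; the decomposition of the anomaly into an interior local density, a boundary density, and the change of Hodge metric on cohomology is the right bookkeeping; and your final observation---that the Hodge-metric piece is absorbed precisely by $\log\T(T)-\log\T$ because the Mayer--Vietoris isomorphism of determinant lines is metric-independent, so its norm responds to a metric change by the alternating product of transition determinants---is exactly the algebraic input that closes the argument.

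Two points where the write-up is looser than it should be. First, the diffeomorphism $\phi:\bm\to M$ cannot be the identity outside $[-1,1]\times Y$: the inserted cylinder must be absorbed into the existing collars $(-2,-1]\times Y$ and $[1,2)\times Y$, so $\phi$ is nontrivial on the full tube $[-2,2]\times Y\subset\bm$ but still respects the decompositions $\phi(\bm_i)=M_i$, which is what is needed to identify the two Mayer--Vietoris sequences. Second, the assertion that the Br\"uning--Ma boundary contributions $\mathcal B(\bm_i)$ ``vanish under the product-type hypothesis'' is not quite accurate as stated: the pair $(g^{TM},h^F)$ is product-type near $Y$, which kills $\int_Y B(g^{TM})$, but $h^{\F}_T=e^{-2f_T}h^{\F}$ is \emph{not} product-type near $\{0\}\times Y$ since $f_T'(0)\neq 0$. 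So the boundary terms associated with the Hermitian-metric variation do not vanish for free. What saves the argument is a combination of (i) $f_T(0)=0$, so the variation of $h^{\F}_T$ is trivial to zeroth order along $\{0\}\times Y$; (ii) parity, since $e(T\bm_i)$ and $e(TY)$ cannot both be nonzero, forcing either the interior or the boundary density to vanish outright; and (iii) the cancellation between $\bm_1$ with absolute and $\bm_2$ with relative boundary conditions on the shared, oppositely oriented boundary $\{0\}\times Y$. Making one of these mechanisms explicit is what is needed to turn ``and vanishes'' into a proof; simply invoking the product-type reduction of the ambient data does not by itself dispose of the boundary terms.

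Finally, as you anticipate in your last paragraph, one should check the sign conventions in the relation $\mathcal C(\bm)-\mathcal C(\bm_1)-\mathcal C(\bm_2)=\pm(\log\T(T)-\log\T)$: with the definition of the torsion of an acyclic metrized complex used here, the sign works out, but it is exactly the kind of detail worth pinning down since an error there would flip the $\log 2$ coefficient in Theorem~\ref{main0}.
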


	\begin{proof}[Proof of Theorem \ref{main0}]
		It follows from Theorem \ref{larcon},  \ref{main} and \ref{int6p} that
		\begin{align*}
			&\ \ \ \ \log\T(g^{T\bm},h^{\F},\nabla^{\F})(T)-\sum_{i=1}^2\log\T_i(g^{T\bm_i},h^{\F_i},\nabla^{\F_i})(T)-\log\T(T)
			\\&=\log(2)\chi(Y)\rank(F)/2+o(1).
		\end{align*}
		Hence, by Theorem \ref{int1p}, Theorem \ref{main0} follows (under the assumption of product-type metrics).
\end{proof}

	\section{Convergence of Eigenvalues}\label{eigen}
In this section, we prove the convergence of eigenvalues (\Cref{eigencon}). In \cref{sec-review}, we review key properties of Witten-deformed Dirac-type operators. In \cref{tube}, we derive estimates for eigenforms on the tube. Using these estimates, we establish \Cref{eigencon} in \cref{proof-eigencon} via a Rayleigh quotient argument.  

From this point onward, \textbf{certain statements will be assumed to hold only for sufficiently large \( T \)}, and this condition will not be explicitly restated each time.

 \subsection{Review on Witten deformed Dirac-type operators}\label{sec-review}
Before proceeding, let’s briefly review some basic facts about (Witten-deformed) Dirac-type operators on $F$-valued differential forms (see also \cite[\S 4.a \& \S4.e]{bismutzhang1992cm}). Due to the presence of several subtleties in this section, and for the reader's convenience, the author has included material in this subsection that may seem obvious and straightforward to experts.
 
	For simplicity, in this section, we abbreviate \( d^{\F} \) and \( d^{\F,*} \) as \( d \) and \( d^* \), respectively. Let \( D = d + d^* \) and \( D_T = d_T + d_T^* \) be Dirac-type and Witten-deformed Dirac-type operators. 
 \def\given{1}
 \if\given1
 Given that \( g^{T\bm} \) and \( h^{\F} \) are of product type on \( [-2,2] \times Y \), on \( [-2,2] \times Y \), one has  canonical decomposition $D_T = D_T^\R + D^Y,$
where $D_T^\R$ and $D^Y$ are given locally as follows.
 
 Let $\{e_i\}_{i=1}^{\dim(M)}$ be a local orthonormal frame of $T\bm$ on $[-2,2]\times Y$, such that $e_1=\frac{\p}{\p s}$.  Let $\nabla$ be the connection on $\Lambda(T^*\bm)\otimes\F$ induced by $\nabla^{\F}$ and $g^{T\bm}$, and let $\nabla^*$ be the dual connection of $\nabla$. Let $\{e^i\}$ be the dual frame of $\{e_i\}$, then set $$D_T^\R=ds\wedge\nabla_{\frac{\p}{\p s}}-i_{\frac{\p}{\p s}}\nabla^{*}_{\frac{\p}{\p s}}+(df_T\wedge)+i_{\nabla f_T}, D^Y=\sum_{i\geq2}e^i\wedge\nabla_{e_i}-i_{e_i}\nabla^*_{e_i}.$$ 
 
 For any vector field $X\subset \Gamma(T\bm)$, let $X^{\flat}$ be the 1-form, such that for any  vector field $X'$, $X^{\flat}(X')=g^{T\bm}(X,X').$  For any vector field $X$, let $c(X),\hc(X)\in \End(\Lambda(T^*\bm)\otimes \F),$ such that $$c(X)=(X^\flat\wedge)-i_X, \hc(X)=(X^\flat\wedge)+i_X.$$ Let $\o:=\nabla^{\F,*}-\nabla^\F$, then $\o$ is an $\End(\F)$-valued 1-form. By \eqref{ddsh}, $\o(\frac{\p}{\p s})=0$. Let $\{e_i\}_{i=1}^{\dim(M)}$ be a local frame of $T\bm$. Since $\nabla^\F$ could be non-unitary, $D\neq \sum_{i=1}c(e_i)\nabla_{e_i}.$ However, it is straightforward to see that (by \cite[(4,26) and (4.27)]{bismutzhang1992cm}) $$D= \sum_{i=1}c(e_i)\nabla_{e_i}-i_{e_i}\o(e_i).$$ It can be verified that \( D_T = D + \hc(\nabla f_T) \). Consequently, for any \( \phi \in C^\infty(\bm) \), the Leibniz rule holds for $D_T$ and $D$:
\be\label{addeq1}
	D\phi u=c\left(\nabla \phi\right)u+\phi Du\mbox{ and }D_T\phi u=c\left(\nabla \phi\right)u+\phi D_Tu.
	\ee
	Since $f_T':=\frac{\p}{\p s} f_T$ and $f_T'':=\frac{\p^2}{\p s^2}f_T$ are supported inside $(-2,2)\times Y$, they could be viewed as elements in $C^\infty(\bm)$, then operator $L_T:=f_T''c(\frac{\p}{\p s})\hc(\frac{\p}{\p s})$ could be extended to $\bm$. By \eqref{ddsh}, it can be seen easily that
	\be\label{addeq3}
	\Delta_T=\Delta+L_T+|f_T'|^2,
	\ee
	where $\Delta:=D^2=d^{\F}d^{\F,*}+d^{\F,*}d^{\F}.$
	
	For any differential form $u\in\Omega(\bm,\F)$, one has a
 decomposition on $[-2,2]\times Y$:  Let $\a=i_{\frac{\p}{\p s}}ds\wedge u$ and $\b=i_{\frac{\p}{\p s}} u$, then \be\label{candec} u=\a+ds\wedge\b.\ee It follows from  \eqref{addeq3} that acting on the $\a$-components (on $[-2,2]\times Y$),
	\be\label{addeq31}
	\Delta_T\a=\Delta\a-f_T''\a+| f_T'|^2\a;
	\ee
	while on the $ds\wedge\b$-components (recalling \Cref{conv1}),
	\be\label{addeq32}
	\Delta_Tds\wedge\b=\Delta ds\wedge\b+f_T''ds\wedge\b+| f_T'|^2ds\wedge\b.
	\ee
	Let $\lan\cdot,\cdot\ran$ be the metric on $\Lambda(T^*\bm)\otimes \F$ induced by $g^{T\bm}$ and $h^{\F}$, and $|\cdot|:=\sqrt{\lan\cdot,\cdot\ran}$. Since the metric $g^{T\bm}$ and $h^{\F}$ are product-type, $\Delta_T$ preserves the  decomposition \eqref{candec} on $[-2,2]\times Y$ above. So it is easy to see that for any $\phi\in C_c^\infty((-2,2)\times Y),$ \be\lan\label{25} \Delta_T \phi \a,\phi ds\wedge\b\ran=\lan  \phi \a,\Delta_T\phi ds\wedge\b\ran=0.\ee
As a result, for any \( \phi \in C_c^\infty((-2,2) \times Y) \) (recalling \Cref{conv1}),
	\begin{align}\label{addeq5}
		\begin{split}
			&\ \ \ \	\int_{\bm}|D_T\phi ds\wedge\b|^2+|D_T\phi \a|^2\dvol_{\bm}\\
			&=\int_{\bm}\lan \Delta_T\phi ds\wedge\b,\phi ds\wedge\b\ran+	\lan \Delta_T\phi\a, \phi\a\ran\dvol_{\bm}\mbox{ ( Integration by parts)}\\
			&=\int_{\bm}\lan \Delta_T\phi u,\phi u\ran\dvol_{\bm}=\int_{\bm} |D_T\phi u|^2\dvol_{\bm}\mbox{ (By \eqref{25}, then integration by parts)}.
		\end{split}
	\end{align}

 Lastly, the proof of Theorem \ref{eigencon} frequently makse use of the following extensions. Consider any function $\rho \in C^\infty(\mathbb{R})$ such that $\rho|_{[2,\infty)}\equiv c_2$ and $\rho|_{(-\infty,-2]}\equiv c_1$ are constant functions. We can treat $\rho$ as a smooth function on $\bm$ in the following way: For $(s,y) \in [-2,2]\times Y$, $\rho(s,y)=\rho(s)$, while $\rho|_{M_1-[-2,-1]\times Y}=c_1$ and $\rho|_{M_2-[1,2]\times Y}=c_2$. Similarly, the derivative $\rho'=\frac{d}{ds}\rho$ can be interpreted as an element in $C_c^\infty((-2,2)\times Y)\subset C^\infty(\bm)$.

	\subsection{Estimates of Eigenforms on the tube $[-1,1]\times Y$}\label{tube}
Recall that \(\lambda_k(T)\) denotes the \(k\)-th eigenvalue of \(\Delta_T\), \(\tilde{\lambda}_k(T)\) is the \(k\)-th eigenvalue of \(\Delta_{T,1} \oplus \Delta_{T,1}\), and \(\lambda_k\) is the \(k\)-th eigenvalue of \(\Delta_1 \oplus \Delta_2\).  In this subsection, we establish key estimates to prove Theorem \ref{eigencon}. First, in \Cref{o1}, we obtain a uniform bound for \(\lambda_k(T)\) and \(\tilde{\lambda}_k(T)\) using simple Rayleigh quotient arguments. Next, we derive a trace formula for Witten-deformed Dirac operators in \Cref{limit1}. Then, by \Cref{limit1} and \Cref{cor-limit1}, we prove \Cref{limit2}, which implies that the eigenforms of \(\Delta_T\) converge to differential forms satisfying the desired boundary conditions. Finally, we establish \Cref{limit0}, implying that the \(L^2\)-norm of the eigenforms on the tube becomes negligible as \(T \to \infty\).

		 First, one observes that $\l_k(T)$ and $\tilde{\l}_k(T)$ have uniform upper bounds:
	\begin{lem}\label{o1}
		There exists an increasing $T$-independent sequence $\{\Lambda_k\}_{k=1}^\infty$, such that $\lambda_{k}(T)\leq \Lambda_k$ and $\tl_k(T)\leq \Lambda_k$.
	\end{lem}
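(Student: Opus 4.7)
The approach is the standard min-max principle applied to a $T$-independent test subspace. The key observation enabling uniform bounds is that by construction $f_T$ is the constant $-T/2$ on $M_1\setminus((-2,-1]\times Y)$, so $df_T\equiv 0$ on this open set $U\subset\bm$. On smooth $\F$-valued forms compactly supported in $U$, the operators $d_T=d^{\F}+df_T\wedge$ and $d_T^{*}$ reduce to the untwisted $d^{\F}$ and $d^{\F,*}$, and hence $\Delta_T$ acts as the ordinary Hodge Laplacian, independently of $T$.

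Fix once and for all an $L^2$-orthonormal sequence $\{\phi_j\}_{j=1}^\infty\subset C_c^\infty(U;\F)$, which exists since $C_c^\infty(U;\F)$ is infinite-dimensional, and set
\[\Lambda_k := k\cdot\max_{1\leq j\leq k}\|D\phi_j\|_{L^2(\bm)}^{2},\qquad D:=d^{\F}+d^{\F,*}.\]
This is an increasing $T$-independent sequence. Take $V_k=\spa\{\phi_1,\ldots,\phi_k\}\subset\Dom(\Delta_T)$. For any unit vector $u=\sum_{j=1}^k c_j\phi_j$, the pointwise identity $D_Tu=Du$ on $\supp(u)\subset U$, combined with Cauchy--Schwarz, gives
\[\|D_Tu\|_{L^2(\bm)}^{2} = \|Du\|_{L^2(\bm)}^{2}\leq k\sum_{j=1}^{k}|c_j|^{2}\|D\phi_j\|_{L^2(\bm)}^{2}\leq\Lambda_k.\]
The min-max characterization
\[\l_k(T)=\inf_{\dim V=k}\sup_{u\in V\setminus\{0\}}\frac{\|D_Tu\|_{L^2(\bm)}^2}{\|u\|_{L^2(\bm)}^2}\]
then yields $\l_k(T)\leq\Lambda_k$ for every $T$.

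For $\tl_k(T)$, I would embed each $\phi_j$ as $(\phi_j,0)$ into $\Omega_{\abs}(\bm_1;\F_1)\oplus\Omega_{\rel}(\bm_2;\F_2)_T$; this is admissible because $\phi_j$ is compactly supported in the interior of $\bm_1$ and trivially satisfies all boundary conditions. Since $f_T\equiv -T/2$ on $U$, the weight $e^{-2f_T}$ reduces to the constant $e^{T}$ on the support of every such test form, so every integral taken in the weighted inner product $(\cdot,\cdot)_{L^2,T}$ equals $e^{T}$ times the unweighted integral. Consequently the weighted Rayleigh quotient for $\tD_{T,1}$ coincides numerically with the unweighted Rayleigh quotient for $D^{2}$, and the same test subspace yields $\tl_k(T)\leq\Lambda_k$ by min-max applied to $\tD_{T,1}\oplus\tD_{T,2}$. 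There is no genuine obstacle in this lemma; its entire content is the choice of the test subspace inside the region where $df_T$ vanishes, so that the first-order operators $d_T,d_T^{*}$ act trivially in $T$.
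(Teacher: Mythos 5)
Your proof is correct and takes essentially the same approach as the paper: both arguments apply the min-max principle to a $T$-independent test subspace of forms compactly supported in the region where $df_T\equiv 0$ (the paper uses bump functions supported in disjoint balls in $M_1^\circ$, you use an $L^2$-orthonormal family in $C_c^\infty(U;\F)$), exploiting that $d_T=d^{\F}$ and $d_T^*=d^{\F,*}$ there. Your treatment of $\tl_k(T)$ is a cleaner spelling-out of the paper's ``similarly,'' correctly noting that the constant weight $e^{-2f_T}=e^T$ cancels in the Rayleigh quotient.
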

	\begin{proof}
		Choose $k$ disjoint balls $\{B_j\}_{j=1}^k$ in $M_1,$ $k$ nonzero smooth functions $\eta_j$ with support $\mathrm{supp}(\eta_j)\subset B_j$. Let $V_k$ be the linear space generated by $\{\eta_j\}$. Then by the Rayleigh quotient argument and the fact that $D_T=D$ on $M_1,$ one can see
		$$\l_{k}(T)\leq \Lambda_k:=\sup_{\psi\in V_k}\frac{\int_{\bm}|D\psi|^2\dvol_{\bm}}{\int_{\bm}|\psi|^2\dvol_{\bm}}=\sup_{\psi\in V_k}\frac{\int_{\bm}|D_T\psi|^2\dvol_{\bm}}{\int_{\bm}|\psi|^2\dvol_{\bm}}.$$
		Similarly, $\tl_k(T)\leq\Lambda_k$ for some sequence $\{\Lambda_k\}.$
	\end{proof}
	
	\def\mycmd{0}
	\ifx\mycmd\undefined
	undefed
	\else
	\if\mycmd1
	\begin{lem}\label{limit1}
		Let $u\in \Omega(\bm;\F)$ be a unit eigenform for eigenvalue $\leq\l$. Then for $s\in\qty[-2,-1+\sqrt{\frac{2}{T}}]\cup\qty[1-\sqrt{\frac{2}{T}},2]$
		\be\label{new10}\int_Y|u|^2(s,y)\dvol_Y\leq C(\lambda+1).\ee
		\be\label{new1}\int_Y|D_Tu|^2(s,y)\dvol_Y\leq C(\lambda^2+1).\ee

		Let $w_i\in \Omega(\bm_i;\F_i)(T),i=1,2$ be a unit eigenform for eigenvalues $\leq\l$. Then for $s\in\qty[-2,-1+\sqrt{\frac{2}{T}}]$ or $s\in\qty[1-\sqrt{\frac{2}{T}},2]$
		\be\label{new20}\int_Y|w_i|^2(s,y)\dvol_Y\leq C(\lambda+1)\ee
		\be\label{new2}\int_Y|D_Tw_i|^2(s,y)\dvol_Y\leq C(\lambda^2+1)\ee
		
		\be\label{new30}\int_Y|w_i|^2(0,y)\dvol_Y\leq C(\lambda+1).\ee
		\be\label{new3}\int_Y|w_i|^2(0,y)+|D_Tw_i|^2(0,y)\dvol_Y\leq C(\lambda^2+1).\ee
		
	\end{lem}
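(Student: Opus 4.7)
The plan is to split the range of $s$ into the ``Morse'' region (where $f_T$ is locally constant and $D_T$ reduces to the ordinary Hodge operator $D$) and the thin transition strip of width $\sqrt{2/T}$ near $s=\pm 1$, and to treat each by a different device. I write the argument out for $u$; the bounds for $w_i$ on $\bm_i$ are proved by the same scheme using the absolute/relative boundary conditions at $\{0\}\times Y$ to control the integration-by-parts contributions at that interface.

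On the Morse region $s\in[-2,-1]\subset M_1$ (and symmetrically on $[1,2]\subset M_2$), the function $f_T$ is identically constant, so $d_T=d^{\F}$ and the eigenvalue equation reduces to the unperturbed $D^2 u=\l u$. Combined with $\|u\|_{L^2(\bm)}=1$ and $\|Du\|_{L^2(\bm)}^2=\l$, interior elliptic regularity for the elliptic complex $(d^{\F},d^{\F,*})$ on any subdomain $M_1'\Subset M_1$ gives $\|u\|_{H^j(M_1')}^2\leq C(1+\l^j)$ for $j=1,2$ with $C$ independent of $T$. The Sobolev trace inequality on $\{s\}\times Y$ then produces \eqref{new10} and \eqref{new1} for $s\in[-2,-1]$. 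The analogous argument on $\bm_i$ with absolute/relative boundary regularity at $\{0\}\times Y$ yields \eqref{new20}, \eqref{new30}, \eqref{new2}, and \eqref{new3} for $w_i$.

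On the thin strip $s\in[-1,-1+\sqrt{2/T}]$, set $\phi(s):=\int_Y|u|^2(s,y)\,\dvol_Y$. Differentiating in $s$ and applying Cauchy--Schwarz on $[-1,s]\times Y$ give
\[
\phi(s) \leq \phi(-1)+2\int_{[-1,s]\times Y}|\partial_s u|\,|u|\,\dvol \leq \phi(-1)+\int_{\mathrm{strip}}|\nabla u|^2\,\dvol+1.
\]
Since $\phi(-1)\leq C(1+\l)$ is already controlled by the Morse-region step, the lemma reduces to the estimate $\int_{\mathrm{strip}}|\nabla u|^2\leq C(1+\l)$ uniformly in $T$.

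The main obstacle is this last bound. A naive Bochner--Weitzenb\"ock pairing $\l=\langle\Delta_T u,u\rangle$ via $\Delta_T=\nabla^*\nabla+R+|df_T|^2+L_{f_T}$ only yields $\|\nabla u\|^2=O(\l+T)$, since $\|L_{f_T}\|_\infty=O(T)$. To remove the growth in $T$, I would exploit the positivity of $|df_T|^2$ in the bulk of the tube: condition (c) on $f_T$ gives $(f_T')^2\geq c_1T^2$ on $[-1/2,1/2]\times Y$, so the global pairing coupled with $\|u\|_{L^2(\bm)}=1$ forces $\|u\|^2_{L^2(\mathrm{bulk})}\leq(\l+CT)/(c_1T^2)=O(T^{-1})$. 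Propagating this smallness backward into the thin strip via an Agmon-type identity---obtained by pairing the eigenvalue equation with $\chi^2 e^{2S}u$ for a $T$-dependent weight $S$ built from $|df_T|$---absorbs the destabilising $L_{f_T}$ term into the positive $|df_T|^2|u|^2$ piece and delivers $\int_{\mathrm{strip}}|\nabla u|^2\leq C(1+\l)$ with $C$ independent of $T$. The bound \eqref{new1} then follows by replaying the same slice argument with $u$ replaced by $D_T u$: since $[D_T,\Delta_T]=0$, $D_T u$ is itself an eigenform of $\Delta_T$ with eigenvalue $\l$ and $L^2$-norm $\sqrt{\l}$, whence \eqref{new10} applied to $D_T u$ gives $\int_Y|D_T u|^2(s,\cdot)\,\dvol_Y\leq C\l(1+\l)\leq C(1+\l^2)$.
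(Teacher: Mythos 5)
Your overall strategy mirrors the paper's: estimate the slice integral where $f_T$ is locally constant via elliptic regularity, then propagate into the transition region, and obtain \eqref{new1} by applying \eqref{new10} to the eigenform $D_T u$. However there is a genuine gap at the central step, the uniform-in-$T$ bound $\int_{\mathrm{strip}}|\nabla u|^2\leq C(1+\l)$.

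The ``absorption'' you propose cannot work on the strip itself. Near $s=-1$ the Hessian dominates: $|f_T'|^2\sim T^2(s+1)^2$ while $f_T''\sim T$, so for $|s+1|\lesssim T^{-1/2}$ the scalar potential $|df_T|^2+L_{f_T}$ is of order $-T$, not nonnegative, and there is nothing positive to absorb $L_{f_T}$ into precisely on the set you need to control. The Agmon mechanism also points the wrong way: a weight built from $|df_T|$ propagates $L^2$-smallness \emph{from} the well (the strip, where $|df_T|$ is small) \emph{into} the barrier (the bulk, where $|df_T|\geq cT$), whereas you need to move information in the opposite direction; and unwinding the weight reintroduces $e^{2S}|u|^2$ factors that are uncontrolled without the very bound you are trying to establish.

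What the paper does instead is quantitatively sharper. In the first version of the proof one works over substrips $[-1,-1+\sqrt{\gamma/T}]\times Y$: integration by parts there produces a boundary contribution $\lesssim\sqrt{T}\,A_T$ (with $A_T$ the sup over slices of $\int_Y|u|^2$) and a bulk penalty $\lesssim T\cdot\sqrt{\gamma/T}\,A_T=\sqrt{\gamma T}\,A_T$, which after the Cauchy--Schwarz step returns an inequality of the form $A_T\leq 2\,\phi(-1)+2\sqrt{\gamma/T}\,\l+C\sqrt{\gamma}\,A_T$; the crucial $\sqrt{\gamma}$-gain from shrinking the substrip lets one fix $\gamma$ small (independently of $T$) and solve for $A_T$. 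The compiled version avoids iteration altogether by exploiting $\partial_s f_T\geq 0$: writing $u=\a+ds\wedge\b$, the function $h(s)=\int_Y|\a|^2(s,\cdot)\,\dvol_Y$ satisfies $h'(s)\leq\int_Y|d_Tu|^2+|u|^2$ because the $\partial_sf_T$ cross-term has a favorable sign, and integrating from the far end of the collar gives the bound. Either way a structural feature of $f_T$ — the small-width iteration or the monotonicity — is used that your argument omits.

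A smaller issue: interior elliptic regularity on $M_1'\Subset M_1$ does not reach $\{-1\}\times Y$, the boundary of $M_1$. One needs the paper's cutoff-plus-G\aa rding argument (with a cutoff supported where $f_T$ is constant, tracing at $s=-2$), or propagation from $s=-2$ to $s=-1$ via the monotonicity inequality above.
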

	\begin{proof}\ \\
		$\bullet$ {\textit{We only prove \eqref{new10} and \eqref{new1} for $s\in[-2,-1+\sqrt{\frac{2}{T}}].$} }\\
		First, by trace formula and G\aa rding's inequality,
		\begin{align*}
			\int_Y |u|^2(-1,y)\dvol_Y&\leq C\int_{M_1}|u|^2+|\nabla u|^2\dvol_{M_1}\leq C\int_{M_1} |u|^2+|(d+d^*)u|^2\dvol_{M_1} \\
			&=C\int_{M_1}|u|^2+|(d_T+d_T^*)u|^2\dvol_{M_1}\leq C(\lambda+1).
		\end{align*}
		Similarly, one still has for $s\in [-2,-1]$,
		\be\label{lnew}\int_Y |u|^2(s,y)\dvol_Y\leq C(\lambda+1).\ee
		\def\c{\gamma}
		
		Next, for $\c\in(0,1)$ to be determined, suppose $s_0\in\qty[-1,-1+\sqrt{\frac{\c}{T}}]$ attains the supremum of $$A_T:=\sup_{s\in [-1,-1+\sqrt{\frac{\c}{T}}]}\int_{Y}|u|^2(s,y)\dvol_Y,$$
		then 
		\begin{align}\begin{split}\label{uuu1}
				\int_{Y}|u(s_0,y)-u(-1,y)|^2\dvol_{Y}&\leq \int_{Y}\left|\int_{-1}^{-1+\sqrt{\frac{\c}{T}}}|\frac{\partial}{\partial s'}u(s',y)|ds'\right|^2\dvol_{Y}\\
				&\leq\sqrt{\frac{\c}{T}}\int_{Y}\int_{-1}^{-1+\sqrt{\frac{\c}{T}}}|d u(s',y)|^2+|d^* u(s',y)|^2ds'\dvol_{Y}.\\
		\end{split}\end{align}
		Integration by parts,
		\begin{align}
			\begin{split}\label{uuu2}
				&\ \ \ \ \l\geq\int_{\bm}|d_T u|^2+|d_T^* u|^2\dvol_{\bm}\geq \int_{-1}^{-1+\sqrt{\frac{\c}{T}}}\int_{Y}|d_T u|^2+|d^*_T u|^2\dvol_{Y}ds\\
				&\geq \int_{-1}^{-1+\sqrt{\frac{\c}{T}}}\int_{Y}|d u|^2+|d^* u|^2+(L_Tu,u)+|f_T'|^2|u|^2\dvol_Yds\\
				&-\int_Y|df_T||u|^2\qty(-1+\sqrt{\frac{\c}{T}},y)\dvol_Y\\
				&\geq \int_{-1}^{-1+\sqrt{\frac{\c}{T}}}\int_{Y}|d u|^2+|d^* u|^2-\Cn_4T|u|^2\dvol_Yds-\sqrt{T}A_T\\
				&\geq\int_{-1}^{-1+\sqrt{\frac{\c}{T}}}\int_{Y}|d u|^2+|d^* u|^2\dvol_Yds-(1+\Cn_4\sqrt\gamma)\sqrt{ T}A_T.\\
			\end{split}
		\end{align}
		See \cref{aw} for the definition of $\Cn_4.$
		By (\ref{uuu1}) and (\ref{uuu2}), one can see that
		\[A_T\leq (\l+1)\qty(\sqrt{\frac{\c}{T}}+C)+\sqrt{\c}(1+\Cn_4\sqrt{\c})A_T.\]
		Fix $\c\in(0,1)$, such that $\sqrt{\c}(1+\Cn_4\sqrt{\c})\leq 1/2.$ Thus, whenever $s\in\qty[-1,-1+\sqrt{\frac{\c}{T}}]$,
		\[\int_Y|u(s,y)|^2\dvol_Y\leq 3C(\l+1).\]
		Similarly, one can show that for $s\in\qty[-1+\sqrt{\frac{\c}{T}}, -1+2\sqrt{\frac{\c}{T}}]$
		\[\int_Y|u(s,y)|^2\dvol_Y\leq 3^2C(\l+1).\]
		Let $m=[\frac{2}{\gamma}]+1$, then repeating the arguments above for $m$ times, one can see that
		\be\label{new11}\int_Y|u(s,y)|^2\dvol_Y\leq 3^mC(\l+1)\ee
		whenever $s\in\qty[-1,-1+\sqrt{\frac{2}{T}}].$
		
		Replace $u$ with $D_Tu$ and repeat the arguments above, one has 
		\[\int_Y|D_Tu(s,y)|^2\dvol_Y\leq C(\l^2+1)\] 
		whenever $s\in\qty[-1,-1+\sqrt{\frac{2}{T}}].$\\
		$\bullet$ {\textit{Similarly, we have \eqref{new20} and \eqref{new2}.}}\\
		$\bullet$ {\textit{We only prove \eqref{new30} and \eqref{new3} for $w_1$.}}\\
		Assume that on $[-2,0]\times Y$, $w_1=\a_1+\b_1 ds$, and set
		\[h(s):=\int_Y|\a_1|^2(s,y)\dvol_Y.\]
		Then $h(0)=\int_Y|w_1|^2(0,y)\dvol_Y$.
		
		We first show that $h(0)\leq C(\l+1).$
		
		Notice that for $s\in[-1,0]$, $f_T$ is increasing, one computes
		\begin{align}\begin{split}\label{new4}
				&\ \ \ \ h'(s)=2\int_Y\lan\partial_s\a_1,\a_1\ran\dvol_Y\\
				&=2\int_Y\lan d_T\a_1,ds\wedge\a_1\ran\dvol_Y-2\int_Y\lan \partial_sf_Tds\wedge\a_1,ds\wedge\a_1\ran\dvol_Y\\
				&\leq 2\int_Y\lan d_T\a_1,ds\wedge\a_1\ran\dvol_Y\leq 2\int_Y|d_T\a_1|^2+|\a_1|^2\dvol_Y.
		\end{split}\end{align}
		
		By (\ref{new4}) and (\ref{new20}), one can see that
		\[h(0)=h(-1)+\int_{-1}^0h'(s)ds\leq C(1+\l).\]
		
		Similarly, one can show that
		\[\int_Y|D_Tw_i|^2(0,y)\dvol_Y\leq C(1+\l^2).\]
	\end{proof}
	\else

	We have the following trace formulas for Witten deformed Dirac operators:
	\begin{lem}\label{limit1}
		Let $u\in \Omega(\bm;\F)$. Then for $s\in\qty[-7/4,7/4]$
		\be\label{new10}\int_Y|u|^2(s,y)\dvol_Y\leq C\left(\int_{[-2,2]\times Y}|D_Tu|^2+|u|^2\dvol_{\bm}\right).\ee
		Let $w_i\in \Omega_\bd(\bm_i;\F_i)(T), i=1,2.$ Then
\be\ba\label{new20}&\int_Y|w_1|^2(s,y)\dvol_Y\leq C\left(\int_{[-2,0]\times Y}|D_Tw_1|^2+|w_1|^2\dvol_{\bm_1}\right), \forall s\in\qty[-7/4,0];\\
        &\int_Y|w_2|^2(s,y)\dvol_Y\leq C\left(\int_{[0,2]\times Y}|D_Tw_2|^2+|w_2|^2\dvol_{\bm_2}\right),\forall s\in\qty[0,7/4].\\
        \ea\ee

	\end{lem}
	\begin{proof}Note that on $[-2,2]\times Y$, one has decompositions (see \eqref{candec}) \be\label{addeq4} u=\a+ds\wedge\b \mbox{ and } w_i=\a_i+ds\wedge\b_i(i=1,2).\ee
		Since $g^{T\bm}$ and $h^{\F}$ are product-type, point-wisely, \be\label{abu}|\a|^2+|\b|^2=|u|^2\mbox{ and }\lan\a,ds\wedge \b\ran=0.\ee
		
		Let $\rho\in C_c^\infty(-2,2)$, such that $\rho|_{(-7/4,7/4]}\equiv1$. Then we can regard $\rho$ as a smooth function on $\bm$.

		For $s\in[-2,2]$, set \[h(s):=\int_Y|\rho\a|^2(s,y)\dvol_Y.\]
	Recalling \Cref{conv1}, we compute:
		\begin{align}\begin{split}\label{new4}
				&\ \ \ \ h'(s)=2\Re\int_Y\lan\partial_s\rho\a,\rho\a\ran(s,y)\dvol_Y=2\Re\int_Y\lan i_{\frac{\p}{\p s}}D\rho\a,\rho\a\ran(s,y)\dvol_Y\\
				&=2\Re\int_Y\lan i_{\frac{\p}{\p s}}D_T\rho\a,\rho\a\ran(s,y)\dvol_Y-2\int_Y f_T'|\rho\a|^2(s,y)\dvol_Y\\
				&\leq 2\Re\int_Y\lan i_{\frac{\p}{\p s}}D_T\rho\a,\rho\a\ran(s,y)\dvol_Y\mbox{ (By \eqref{D}, that is, $f_T'\geq0$)}\\
				&\leq \int_Y|D_T\rho\a|^2(s,y)+|\rho u|^2(s,y)\dvol_Y.\\
		\end{split}\end{align}
		 The last inequality follows from the  arithmetic-geometric mean inequality and \eqref{abu}.

		One can see that
		\begin{align}\begin{split}\label{mod12}
				h(s)&=\int_{-2}^sh'(s)ds\mbox{ (Since $h(-2)=0$)}\\&
				\leq \int_{\bm}|u|^2+|D_T\rho\a|^2\dvol_{\bm}\mbox{ (By (\ref{new4}))}\\&\leq\int_{\bm}|u|^2+|D_T \rho u|^2\dvol_{\bm}\mbox{ (By \eqref{addeq5})}\\
    &\leq C\int_{\bm}|u|^2+|D_T  u|^2\dvol_{\bm} \mbox{ (By Leibniz rule \eqref{addeq1})}.\end{split}\end{align}

		\def\htl{\tilde{h}}
		\def\ps{\frac{\p}{\p s}}
		For $s\in[-2,2]$, set \[\htl(s):=\int_Y|\rho\b|^2(s,y)\dvol_Y.\]
		
		Proceeding as before, one computes \begin{align}\begin{split}\label{new50}
				&\ \ \ \ \htl'(s)=2\Re\int_Y\lan\partial_s\rho\b,\rho\b\ran(s,y)\dvol_Y\\
				&=-2\Re\int_Y\lan D_Tds\wedge\rho\b ,\rho\b\ran(s,y)\dvol_Y+2\int_Y\ f_T'|\rho\b|^2(s,y)\dvol_Y\\
				&\geq -\int_Y|D_T\rho ds\wedge\b|^2(s,y)+|\rho u|^2(s,y)\dvol_Y.
		\end{split}\end{align}
		Repeating what we just did before, by (\ref{new50}), one can see that
		\be\label{mod-121}\htl(s)=-\int_{s}^2\htl'(s)ds\leq C \int_{\bm}|u|^2+|D_T  u|^2\dvol_{\bm}.\ee
	As a result, combining \eqref{mod12} and \eqref{mod-121}, we have \eqref{new10} for $u.$

		Repeating the arguments above and noticing that $\b_1(0,y)=0, \a_2(0,y)=0$, we have \eqref{new20} for $w_1$ and $w_2$. 
		
	\end{proof}

    \begin{cor}
       \label{cor-limit1}
	Assume that \( u \) is a unit eigenform of \( \Delta_T \) corresponding to the eigenvalue \( \lambda \). Consider the decomposition \eqref{candec} of \( u \), namely,  
\[ u = \alpha + ds \wedge \beta. \]  
Then, the following estimates hold for any $s\in[-7/4/7/4]$:  
\begin{equation} \label{cor-eq1}  
\int_Y |D_T \alpha|^2(s,y) \, \dvol_Y \leq C(1+\lambda^2),  
\end{equation}  
and  
\begin{equation} \label{cor-eq12}  
\int_Y |D_T (ds \wedge \beta)|^2(s,y) \, \dvol_Y \leq C(1+\lambda^2).  
\end{equation}  

Similarly, if \( w_i \) (\( i = 1,2 \)) is a unit eigenform of \( \Delta_{T,i} \) corresponding to the eigenvalue \( \lambda \), then analogous estimates hold for \( w_i \) (\( i = 1,2 \)).
    \end{cor}
    \begin{proof}
        Let \( \rho \in C_c^\infty(-2,2) \) be a smooth function satisfying \( \rho|_{(-7/4,7/4]} \equiv 1 \). We can regard \( \rho \) as a smooth function on \( \bm \). Then \( \rho \alpha \) is a smooth form on \( \bm \).  

Applying \eqref{new10} for $D_T\rho\a$, for any \( s \in [-7/4,7/4] \), we have  
\be\label{cor-eq5}  
\int_Y |D_T \rho \alpha|^2(s,y) \dvol_Y \leq C \left( \int_{[-2,2] \times Y} \left( |D_T^2 \rho \alpha|^2 + |D_T \rho \alpha|^2 \right) \dvol_{\bm} \right).
\ee  

We have  
\be\label{split-eigen1}  
D_T^2 (\rho u) = (\Delta \rho) u -2 \lan\nabla \rho, \nabla u\ran + \rho \Delta_T u.  
\ee  

Let \(\rho_1 \in C_c^\infty((-2, -13/8) \times Y \cup (13/8, 2) \times Y)\) be a function satisfying \(\rho_1 \equiv 1\) in a neighborhood of \(\supp(|\nabla \rho|)\). Then, applying the Lichnerowicz-type formula and Leibniz's rule, one can easily verify that  
\be\ba\label{eq420}
&\ \ \ \ \int_{\supp(\nabla \rho)}|\nabla u|^2\dvol_{\bm}\leq\int_{\bm} |\nabla (\rho_1 u)|^2 \, \dvol_{\bm} \leq \int_{\bm} |D (\rho_1 u)|^2+C|\rho_1u|^2 \, \dvol_{\bm}\\&=\int_{\bm} |D_T (\rho_1 u)|^2+C|\rho_1u|^2 \, \dvol_{\bm}\leq C'\int_{\bm} |D_T  u|^2+|u|^2 \, \dvol_{\bm}.
\ea\ee
Thus,
\be\label{split-eigen11}
\int_{\bm}|\nabla\rho|^2|\nabla u|^2\dvol\leq C\int_{\bm} |D_T  u|^2+|u|^2 \, \dvol_{\bm}.
\ee

Since \( \Delta_T = D_T^2 \) preserves the decomposition \eqref{candec}, we conclude that on \( [-2,2] \times Y \),  
\be\label{split-eigen}  
|\Delta_T\rho ds \wedge \beta|^2 + |\Delta_T \rho\alpha|^2 = |\Delta_T \rho u|^2.  
\ee  

We then compute 
\be\label{split-eigen3}  
\begin{aligned}  
&\ \ \ \  \int_{[-2,2] \times Y} |D_T^2 \rho \alpha|^2 + |D_T \rho \alpha|^2 \dvol_{[-2,2] \times Y}\\
&\leq  \int_{\bm} |D_T^2\rho u|^2 + |D_T\rho u|^2  \dvol_{\bm}\quad\text{(By \eqref{addeq5} and \eqref{split-eigen})}\\
&\leq C \int_{\bm} (|\Delta_Tu|^2 + |D_T u|^2 + |u|^2) \dvol_{\bm} \quad\text{( By \eqref{addeq1}, \eqref{split-eigen1} and \eqref{split-eigen11} )}\\&\leq C''(\lambda^2 + 1). 
\end{aligned}  
\ee

Thus, estimate \eqref{cor-eq1} follows directly from \eqref{cor-eq5} and \eqref{split-eigen3}. Similarly, estimate \eqref{cor-eq12} can be derived in the same manner.
    \end{proof}
	\fi
	\fi

	\def\mycmde{0}
	\ifx\mycmde\undefined
	undefed
	\else
	\if\mycmd1
	\begin{lem}\label{limadd}
		Assume $u,w_i,i=1,2$ meet the same conditions as in Lemma \ref{limit1}.
		Then 
		\be\label{new5}\int_{-1/2}^{1/2}\int_Y|u(s,y)|^2\dvol_Yds\leq \frac{C(\l+1)}{T^{2}},\ee
		\be\label{new6}\int_{-1/2}^{0}\int_Y|w_1(s,y)|^2\dvol_Yds\leq \frac{C(\l^2+1)}{T^{2}},\ee
		\be\label{new7}\int_{0}^{1/2}\int_Y|w_2(s,y)|^2\dvol_Yds\leq \frac{C(\l^2+1)}{T^{2}}.\ee
		
		Moreover, if $u,w_i,i=1,2$ are harmonic, then for any $l\in\Z^+$, there exists $T$-independent $C_l,$ such that
		\be\label{new51}\int_{-1/2}^{1/2}\int_Y|u(s,y)|^2\dvol_Yds\leq \frac{C_l}{T^{l}},\ee
		\be\label{new61}\int_{-1/2}^{0}\int_Y|w_1(s,y)|^2\dvol_Yds\leq \frac{C_l}{T^{l}},\ee
		\be\label{new71}\int_{0}^{1/2}\int_Y|w_2(s,y)|^2\dvol_Yds\leq \frac{C_l}{T^{l}}.\ee
		
	\end{lem}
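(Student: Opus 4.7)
The plan is to combine the Bochner--Weitzenb\"ock identity $\Delta_T=\Delta+|df_T|^2+L_{f_T}$ (where $L_{f_T}$ is a zeroth order operator with $|L_{f_T}|\le C|\Hess(f_T)|\le C'T$ supported in $[-1,1]\times Y$) with a $T$-independent cutoff argument exploiting the uniform lower bound $|df_T|^2\ge cT^2$ on $[-3/4,3/4]\times Y$; the latter follows from condition (c) in \cref{aw} together with the explicit expression $f_T(s)=T/2-T(s-1)^2/2$ on $[1/2,3/4]$, which gives $|f_T'(s)|\ge T/4$ there.

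For \eqref{new5}, I would choose a $T$-independent cutoff $\chi\in C_c^\infty((-3/4,3/4))$ with $\chi\equiv1$ on $[-1/2,1/2]$ and apply the Bochner identity to $\chi u$. Using $d_T(\chi u)=\chi d_Tu+d\chi\wedge u$ and $d_T^*(\chi u)=\chi d_T^*u-i_{\nabla\chi}u$, one gets the $T$-independent upper bound
\[\|d_T(\chi u)\|_{L^2}^2+\|d_T^*(\chi u)\|_{L^2}^2\le C(\lambda+1),\]
while dropping the non-negative $\|d(\chi u)\|^2+\|d^*(\chi u)\|^2$ yields the lower bound
\[\|d_T(\chi u)\|^2+\|d_T^*(\chi u)\|^2\ge (cT^2-C'T)\int_{[-1/2,1/2]\times Y}|u|^2.\]
Combining these two inequalities gives \eqref{new5}.

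For \eqref{new6} and \eqref{new7} the same scheme applies on $[-3/4,0]\times Y$ (resp.\ $[0,3/4]\times Y$), with $\chi$ allowed to equal $1$ at the interior boundary $s=0$ so that $\chi w_i$ still lies in $\Omega_{\bd}(\bm_i;\F_i)(T)$. Because the cutoff does not vanish at $\{0\}\times Y$, rewriting $\|d_T(\chi w_i)\|^2+\|d_T^*(\chi w_i)\|^2$ via the Bochner identity picks up a boundary contribution at $\{0\}\times Y$ of the form $f_T'(0)\int_Y|\chi w_i|^2(0,y)\,\dvol_Y$ coming from integration by parts in the cross terms. Since $f_T'(0)\sim T$, controlling this boundary term using the traces from \eqref{new20} and \eqref{new2} of Lemma \ref{limit1} (the latter giving $\int_Y|D_Tw_i|^2(0,y)\,\dvol_Y=O(\lambda^2+1)$) produces the quadratic-in-$\lambda$ dependence in the stated bounds. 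The main technical obstacle is tracking the signs of the boundary contributions carefully so that they are absorbed into the estimate rather than obstructing it.

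For the harmonic improvements \eqref{new51}--\eqref{new71}, the key observation is that $d_Tu=d_T^*u=0$, so the upper bound above collapses to
\[\|d_T(\chi u)\|^2+\|d_T^*(\chi u)\|^2=\|d\chi\wedge u\|^2+\|i_{\nabla\chi}u\|^2\le C\int_{\supp(d\chi)}|u|^2.\]
Taking a telescoping family of cutoffs $\chi_0,\chi_1,\ldots,\chi_l$ with nested intervals $[-3/4,3/4]\supset I_0\supset I_1\supset\cdots\supset I_l=[-1/2,1/2]$, $\chi_k\equiv1$ on $I_{k+1}$ and $\supp(\chi_k)\subset I_k$, the previous estimate becomes $A_{k+1}\le (C/T^2)A_k$ for $A_k:=\int_{I_k}|u|^2$, so that starting from $A_0\le1$ we arrive at $A_l\le C_l/T^{2l}$, proving \eqref{new51}. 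The same scheme applies to $w_i$ once the boundary contribution at $\{0\}\times Y$ is also re-estimated at each iteration using the trace theorem applied to the improved bulk bound from the previous stage; this iterative boundary estimate is the main technical obstacle, but it closes because the factor $f_T'(0)\sim T$ is balanced by the improving bulk bounds and the elliptic regularity of harmonic forms.
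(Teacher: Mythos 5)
Your overall strategy (the Bochner--Weitzenb\"ock identity with a $T$-independent cutoff supported where $|df_T|^2\ge cT^2$, followed by a telescoping improvement in the harmonic case) is the same as the paper's, and for \eqref{new5} and \eqref{new51} the argument is sound. However, for the $w_i$ estimates there is a genuine gap in your treatment of the boundary term at $\{0\}\times Y$, and it affects both \eqref{new6}--\eqref{new7} and the iteration in \eqref{new61}--\eqref{new71}.

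You identify the boundary contribution as being of the form $f_T'(0)\int_Y|\chi w_i|^2(0,y)\,\dvol_Y$ and propose to control it by the trace bounds \eqref{new20}, \eqref{new2} from Lemma \ref{limit1}. But \eqref{new20} only gives $\int_Y|w_i|^2(0,y)\,\dvol_Y\le C(\l+1)$, so this controls the boundary term by $CT(\l+1)$. Inserting that into the Bochner inequality yields $cT^2\int_{[-1/2,0]}|w_1|^2 \le C(\l^2+1)+CT(\l+1)$, hence only $\int_{[-1/2,0]}|w_1|^2 = O\big((\l+1)/T\big)$, i.e.\ $T^{-1}$, not the $T^{-2}$ asserted in \eqref{new6}. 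The same floor persists at every step of your telescoping scheme for the harmonic case: one would get $A_{k+1}\le \tfrac{C}{T^2}A_k + \tfrac{C}{T}$, which stalls at $O(T^{-1})$ and cannot produce $O(T^{-l})$ for arbitrary $l$.

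The missing ingredient is a sign observation rather than a trace estimate. The full boundary term is $-\int_Y\langle ds\wedge w_1, dw_1\rangle(0,y)\,\dvol_Y$; writing $dw_1 = d_Tw_1 - df_T\wedge w_1$ it splits as
\begin{equation*}
-\int_Y\langle ds\wedge w_1, d_Tw_1\rangle(0,y)\,\dvol_Y \;+\; \int_Y \partial_sf_T(0)\,|\a_1|^2(0,y)\,\dvol_Y,
\end{equation*}
with $w_1=\a_1+\b_1\,ds$. Since $f_T$ is increasing at $s=0$, the second ($T$-growing) piece is \emph{nonnegative} and hence may simply be dropped in the lower bound, rather than estimated. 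Only the first piece needs to be estimated via Cauchy--Schwarz and the trace bounds from Lemma \ref{limit1}, and it is $O(\l^2+1)$ uniformly in $T$, which is exactly what produces \eqref{new6}--\eqref{new7} with the $T^{-2}$ rate. In the harmonic case one has $d_Tw_1=0$, so this first piece vanishes identically; there are then no boundary terms whatsoever to be ``re-estimated at each iteration,'' and the telescoping improvement closes directly with ratio $C/T^2$. Your proposed iterative boundary re-estimation is therefore not only more complicated than needed, it would fail without the sign observation. Once you incorporate this sign argument (and verify the analogous sign works on the $\bm_2$ side, where the outward normal is $-\partial_s$ but the relative boundary condition forces $\a_2(0,\cdot)=0$), the rest of your proposal goes through as written.
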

	\begin{proof}\ \\
		$\bullet$ {\textit{Proof of \eqref{new5} and \eqref{new51}}.}\\
		Let $0\leq\eta\in C_c^\infty((-2,2))$, such that $\eta|_{[-\frac{1}{2},\frac{1}{2}]}\equiv1$, $\eta|_{[\frac{5}{8},2)\cup [-2,-\frac{5}{8}]}\equiv 0$, $|\nabla \eta|\leq 64$. We can regard $\eta$ as a smooth function on $\bm.$
		
		Integrate by parts, 
		\begin{align}\begin{split}\label{new121}
				&\ \ \ \ \l\geq \int_{\bm}\lan\Delta_Tu,\eta^2u\ran\dvol_{\bm}=\int_{\bm}\lan Du,D\eta^2u\ran+\lan L_Tu,\eta^2u\ran+|f_T'|^2\eta^2|u|^2\dvol_{\bm}\\
				&\geq\int_{\bm}\eta^2\lan Du,Du\ran-|\eta'\eta||\lan Du,u\ran|+\lan L_Tu,\eta^2u\ran+|f_T'|^2\eta^2|u|^2\dvol_{\bm}\\
				&\geq\int_{\bm}\eta^2\lan Du,Du\ran/2-4|\eta'|^2|u|^2+C'T^2\eta^2|u|^2\dvol_{\bm}\\
		\end{split} \end{align}
		(\ref{new121}) implies that
		\be\label{new131}
		\int_{-1/2}^{1/2}\int_Y|u|^2\dvol_Yds\leq \frac{C(\l+1)}{T^2}.\ee
		
		Moreover, if $u$ is harmonic, (\ref{new121}) implies that
		\be\label{new132}
		\int_{-1/2}^{1/2}\int_Y|u|^2\dvol_Yds\leq \frac{C}{T^2} \int_{-5/8}^{5/8}\int_Y|u|^2\dvol_Yds.
		\ee
		Repeating the arguments above, one can see that
		\be\label{new133}
		\int_{-5/8}^{5/8}\int_Y|u|^2\dvol_Yds\leq \frac{C'}{T^2}.
		\ee
		(\ref{new132}) and (\ref{new133}) then imply that
		\be\label{new1320}
		\int_{-1/2}^{1/2}\int_Y|u|^2\dvol_Yds\leq \frac{C_4}{T^4}.
		\ee
		Repeating the arguments above, we have (\ref{new51}) for any $l\in\Z^+.$\\
		$\bullet$ {\textit{Proof of \eqref{new6} and \eqref{new61}}.}\\ 
		Let $\eta_1$ be a nonnegative bounded smooth function on $\bm_1$, such that $\eta_1|_{[-\frac{1}{2},0]\times Y}\equiv1$, $\eta_1|_{\bm_1-[-3/4,0]\times Y}\equiv 0$, $|\nabla \eta_1|\leq 64$. Let $w_1=\a+\b ds$, then
		by Lemma \ref{limit1} and integration by parts,
		\begin{flalign}\begin{split}\label{new12}
				&\ \ \ \ \l\geq \int_{\bm_1}\lan\Delta_Tw_1,\eta_1^2w_1\ran\dvol_{\bm_1}\\
				&=\int_{\bm_1}\lan Dw_1,D\eta_1^2w_1\ran+\lan L_Tw_1,\eta^2_1w_1\ran+|f_T'|^2\eta_1^2|w_1|^2\dvol_{\bm_1}\\
				&-\int_Y \lan ds\wedge w_1, dw_1\ran(0,y)\dvol_Y\\
				&=\int_{\bm_1}\lan Dw_1,D\eta_1^2w_1\ran+\lan L_Tw_1,\eta^2_1w_1\ran+|f_T'|^2\eta_1^2|w_1|^2\dvol_{\bm_1}\\
				&-\int_Y \lan ds\wedge w_1,d_Tw_1\ran(0,y)\dvol_Y+\int_Y\partial_s f_T\lan\a,\a\ran(0,y)\dvol_Y\\
				&\geq \int_{\bm_1}\lan Dw_1,D\eta_1^2w_1\ran+\lan L_Tw_1,\eta^2_1w_1\ran+|f_T'|^2\eta_1^2|w_1|^2\dvol_{\bm_1}-C(\l^2+1)\\
				&\geq\int_{\bm_1}\eta^2\lan Dw_1,Dw_1\ran/2-4|\eta_1'|^2|w_1|^2+C'T^2\eta_1^2|w_1|^2\dvol_{\bm_1}-C(\l^2+1)\\
		\end{split} \end{flalign}
		(\ref{new12}) implies that
		\be\label{new13}
		\int_{-1/2}^{0}\int_Y|w_1|^2\dvol_Yds\leq \frac{C(\l^2+1)}{T^2}.
		\ee
		Similarly, when $w_1$ is harmonic, since $d_Tw_1=0$, (\ref{new12}) also implies (\ref{new61}).
		
	\end{proof}
	\fi\fi
	
	\def\mcmd{0}
	
	\ifx\mcmd\undefined
	undefed
	\else
	\if\mcmd1
	\begin{lem}\label{limit2}
		Assume $u,w_i,i=1,2$ meet the same conditions as in Lemma \ref{limit1}. Moreover, if $u|_{[-2,2]}=v_{1}(s,y)+v_{2}(s,y)ds$, define $P_a(u)(y)=v_2(-1,y)$, $P_b(u)(y)=v_1(1,y)$. 
		\be\label{lemeq1}\int_Y|P_a(u)|^2\dvol_Y\leq \frac{C(\l+1)}{\sqrt{T}},\ee
		and
		\be\label{lemeq2}\int_Y|P_b(u)|^2\dvol_Y\leq \frac{C(\l+1)}{\sqrt{T}}.\ee
		
		Moreover, 
		\be\label{lemeq3}\int_Y|P_a(du)|^2\dvol_Y\leq \frac{C(\mu+1)^2}{\sqrt{T}}\leq\frac{C(\l+1)^2}{\sqrt{T}},\ee
		and
		\be\label{lemeq4}\int_Y|P_b(d^*u)|^2\dvol_Y\leq \frac{C(\mu+1)^2}{\sqrt{T}}\leq\frac{C(\l+1)^2}{\sqrt{T}}.\ee
		Similar statements holds for $w_i,i=1,2.$
	\end{lem}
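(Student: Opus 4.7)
The four inequalities \eqref{lemeq1}--\eqref{lemeq4} are boundary-trace estimates showing that eigenforms of $\Delta_T$ satisfy the absolute (resp.\ relative) boundary condition at $s=-1$ (resp.\ $s=1$) to leading order as $T\to\infty$. We focus on \eqref{lemeq1}; the estimate \eqref{lemeq2} follows by a symmetric argument at $s=1$, and \eqref{lemeq3}--\eqref{lemeq4} are obtained by applying \eqref{lemeq1}--\eqref{lemeq2} to $du$ and $d^*u$, which are themselves eigenforms of $\Delta_T$ with the same eigenvalue $\mu\le\lambda$; the energy bound $\|d\,du\|_{L^2}^2+\|d^*du\|_{L^2}^2\le\mu^2$ produces the $(\mu+1)^2$ factor in the conclusion.

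\textbf{Key cylinder identity.} On $[-2,2]\times Y$, write $u = v_1 + ds\wedge v_2$ with $v_1,v_2$ forms on $Y$ depending on $s$. Using the product structure of $g^{T\bm}$ and $\nabla f_T = f_T'\,\partial_s$, a direct computation yields
\[
d_T^* u \;=\; \bigl(d^{Y,*}v_1 - \partial_s v_2 + f_T'\,v_2\bigr) \;-\; ds\wedge d^{Y,*}v_2,
\]
so $\partial_s v_2 = f_T'\,v_2 + d^{Y,*}v_1 - [d_T^* u]_Y$, where $[\cdot]_Y$ extracts the pure $Y$-form component. This is the workhorse of the argument: it recasts the radial derivative of $v_2$ as a sign-definite weight plus quantities controlled by the Witten energy $\int_{\bm}(|d_Tu|^2+|d_T^*u|^2)\le\lambda$.

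\textbf{Cut-off integration by parts.} Pick a smooth cutoff $\chi$ with $\chi(-1)=1$, $\mathrm{supp}(\chi)\subset[-1,-1+\sigma]$, $|\chi'|\le C/\sigma$, for a parameter $\sigma>0$ to be optimized. Then
\[
\int_Y |v_2(-1,y)|^2\,\dvol_Y \;=\; -\int_{-1}^{-1+\sigma}\!\!\int_Y \partial_s\!\bigl(\chi(s)^2|v_2|^2\bigr)\,\dvol_Y\,ds.
\]
Expanding $\partial_s(\chi^2|v_2|^2) = 2\chi\chi'|v_2|^2 + 2\chi^2\Re\langle\partial_s v_2,v_2\rangle$ and substituting the key identity, the right-hand side splits into: a cutoff-derivative term $\int\chi\chi'|v_2|^2$; a sign-favorable weight term $-2\int\chi^2 f_T'|v_2|^2\le 0$ that is discarded in the upper bound (since $f_T'\ge 0$ on $[-1,0]$); and cross terms paired with $d^{Y,*}v_1$ and $[d_T^*u]_Y$. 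The $d^{Y,*}v_1$-piece is transferred by an $L^2$-integration by parts on $Y$ onto $d^Y v_2$ and then absorbed by Cauchy--Schwarz into the Witten energy, while the $[d_T^*u]_Y$-piece is controlled directly by Cauchy--Schwarz against $\|d_T^*u\|_{L^2}\le\sqrt\lambda$.

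\textbf{Scale balancing and the main obstacle.} Input bounds from Lemma~\ref{limit1} (which gives $\int_Y|u|^2(s,y)\dvol_Y\le C(\lambda+1)$ slice by slice on the cylinder) and from the Bochner formula $\Delta_T=\Delta+|df_T|^2+L_{f_T}$ with $|L_{f_T}|\le C|f_T''|\le CT$ (which yields $\int_{\bm}(f_T')^2|u|^2\le C(\lambda+T)$) allow each contribution above to be estimated in terms of $\sigma$, $\lambda$ and $T$. The optimal balance $\sigma\sim T^{-1/2}$ produces the advertised factor $1/\sqrt T$. The main technical difficulty is precisely this scale balancing: the naive cutoff-derivative term is of size $\sigma^{-1}\!\!\int_{-1}^{-1+\sigma}\!\!\int_Y|v_2|^2\lesssim\lambda+1$ independently of $\sigma$, so one must instead retain (rather than discard) the sign-definite weight term $\int\chi^2 f_T'|v_2|^2$ on the left-hand side and exploit it in tandem with the sharper interior decay of $v_2$ coming from the $(f_T')^2|v_2|^2$ contribution to the energy. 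Carrying out this bookkeeping rigorously---keeping track of the degeneration of $f_T'$ at $s=-1$ on an interval of width $\sim e^{-T^2}$ built into $f_T$ in \cref{aw}, and of the interplay between $\sigma$ and the Witten energy---is where the $T^{-1/2}$ exponent ultimately comes from.
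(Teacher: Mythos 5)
Your key cylinder identity $\partial_s v_2 = f_T'v_2 + d^{Y,*}v_1 - [d_T^*u]_Y$ is correct and is precisely the source of the paper's differential inequality \eqref{new50}. But the cut-off integration by parts you build on it does \emph{not} close, and you say so yourself: the naive cutoff-derivative term is $O(\lambda+1)$ independent of $\sigma$, so no choice of $\sigma$ produces the $T^{-1/2}$ gain. The sentence proposing to ``retain the sign-definite weight term $\int\chi^2 f_T'|v_2|^2$ on the left-hand side and exploit it in tandem with the sharper interior decay'' is where the proof would need to happen, and it is not carried out. As written, that retained term is \emph{bounded} (it is $\lesssim T\sigma^2\cdot\tilde h(-1)\sim\tilde h(-1)$ for $\sigma\sim T^{-1/2}$), so moving it to the left does not by itself create the extra factor of $\sqrt T$; the gain has to be extracted from the $T$-size potential, which your bookkeeping never does explicitly. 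This is a genuine gap, not a detail.

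The paper closes the argument by reorganizing in a way you do not attempt. Writing $\tilde h(s)=\int_Y|\beta|^2(s,y)\,\dvol_Y$, identity \eqref{new50} together with the slice bounds of Lemma~\ref{limit1} gives the \emph{almost-monotonicity} $\tilde h'(s)\ge -C(\lambda+1)^2$ near $s=-1$, hence $\tilde h(s)\ge\tilde h(-1)-C(\lambda+1)^2/\sqrt T$ for $s\in[-1,-1+\sqrt{2/T}]$. This converts a pointwise trace into a mass lower bound on a cylinder of width $\sqrt{2/T}$: $\int_{-1+e^{-T^2}}^{-1+\sqrt{2/T}}\int_Y T|\beta|^2\ge\sqrt T\,\tilde h(-1)-C(\lambda+1)^2$. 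The paper then uses $f_T''+|f_T'|^2\ge T$ on the interior of the tube (and $f_T''\ge 0$, $|f_T'|\le 1$ at $s=-1+e^{-T^2}$) together with a cut-off and integration by parts to bound that same quantity from above by $C\int_{\bm}\bigl(|d_Tu|^2+|d_T^*u|^2+|\eta'||\beta|^2\bigr)+C(\lambda+1)\lesssim(\lambda+1)^2$, with the boundary term at $s=-1+e^{-T^2}$ controlled by Lemma~\ref{limit1}. Dividing by $\sqrt T$ yields $\tilde h(-1)\lesssim(\lambda+1)^2/\sqrt T$. In short: you have the right local identity, but you try to win the $T^{-1/2}$ from the cut-off width, whereas the paper wins it from the size-$T$ potential acting on a $T^{-1/2}$-thin cylinder on which $\tilde h$ provably cannot drop. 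To repair your proof you would need to replace the cut-off IBP step by this two-scale argument (thin-slab lower bound from \eqref{new50} plus the spectral gap), and also be careful that the final bound is $(\lambda+1)^2/\sqrt T$ rather than $(\lambda+1)/\sqrt T$, because the slice bound on $|d_T^*u|$ from Lemma~\ref{limit1} already carries a $\lambda^2$.
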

	\begin{proof}
		Let $\e(T):=\int_Y|v_{2}(-1,y)|^2\dvol_Y$. Notice that on $\Omega(Y;F|_{Y})du$, $\Delta_{T}=-\frac{\partial^2}{\partial s^2}+\Delta^Y+|\frac{\partial}{\partial s}f_{T}|^2+\frac{\partial^2}{\partial s^2}f_T$.  By repeating previous steps,
		
		\begin{align}\begin{split}\label{v1}
				\inf_{s'\in[-1,-1+\sqt]}\int_Y|v_2(s',y)|^2\dvol_Y\geq 
				c\e(T)-\frac{C(\l+1)}{\sqrt{T}}.\\
		\end{split}\end{align}
		
		Let $\eta\in C_c^\infty[-2,2)$ be a bump function, such that $\eta|_{[-2,-1/2]}\equiv1$, $\eta_{[-1/4,2)}\equiv0$.
		
		Note that on $\qty[-1+\sqt,1-\sqt]$, $|f_T'|^2-|f_T''|\geq T$; on $[-1+e^{-T^2},-3/4]$, $f_T''=T\geq0$ , $|f_T'(-1+e^{-T^2})|\leq 1$, by (\ref{v1}) and integration by parts,
		\begin{align}
			\begin{split}\label{v2}
				& c\sqrt{T}\e(T)-C(\l+1)\leq\int_{-1+e^{-T^2}}^{-1+\sqt}\int_YT|v_2(s,y)|^2\dvol_Yds\\
				&\leq \int_{-1+e^{-T^2}}^{-1+\sqt}\int_Y|dv_2(s,y)|^2+|d^*v_2(s,y)|^2+f_T''|v_2|^2+|f_T'|^2u^2\dvol_Yds\\
				&\leq \int_{-1+e^{-T^2}}^{2}\int_Y|d\eta v_2(s,y)|^2+|d^*\eta v_2(s,y)|^2+f_T''|\eta v_2|^2+|\nabla  f_T|^2\eta^2u^2\dvol_Yds\\
				&\leq \int_{-1+e^{-T^2}}^{2}\int_Y|d_T\eta v_2(s,y)|^2+|d_T^*\eta v_2(s,y)|^2\dvol_Yds\\
				&+ \int_Y|df_T||v_2|^2(s,y)\dvol_Y\Big|_{s=-1+e^{-T^2}}\\
				&\leq\int_{\bm}|d_T^*u|^2+|d_T u|^2+|\eta'||\partial_s v_2||v_2|\dvol_{\bm}+C(\l+1).
			\end{split}
		\end{align}
		Moreover,
		\begin{align}\begin{split}\label{v22}
				&\ \ \ \ \int_{\bm}|\eta'||\partial_s v_2||v_2|\dvol_{\bm}\leq \int_{\bm}|\eta'||D_T u||u|+|\eta'||df_T u||u|\dvol_{\bm}\\
				&\leq \int_{\bm}|D_T u|^2+|u|^2+CT|\eta'||u|^2\dvol_{\bm}\leq C(1+\l)+C\int_{\bm}T|\eta'||u|^2\\
				&\leq C(\l+1) \mbox{ (By Lemma \ref{limadd})}.
		\end{split}\end{align}

		According to (\ref{v2}) and (\ref{v22}), $\e(T)\leq \frac{C(\lambda+1)}{\sqrt{T}}.$ Similarly, one has (\ref{lemeq2}).
		
		Replace $u$ with $d_T u$ and notice that $d_Tu=du$ on $\{-1\}\times Y$, (\ref{lemeq3}) follows. Similarly, one has (\ref{lemeq4}).
		
	\end{proof}
	\else

	The following lemma suggests that the eigenforms of \( \Delta_T \) converge to differential forms satisfying the desired boundary conditions. To understand this philosophically, in the neighborhood of \( \{-1\} \times Y \), the behavior of \( \Delta_T \) on the \( ds \wedge \beta \) component is governed by the operator \( -\partial_s^2 + T + T^2(s+1)^2 \), which satisfies \( -\partial_s^2 + T + T^2(s+1)^2 \geq T \). As a result, if \( u \) is a unit eigenform corresponding to the \( k \)-th eigenvalue of \( \Delta_T \), then \( i_{\frac{\partial}{\partial s}} u(-1, y) = \beta(-1, y) \) is expected to be small. Otherwise, by the Rayleigh quotient argument, \( \lambda_k(T) \) would grow significantly as \( T \to \infty \), contradicting \Cref{o1}.
	\begin{lem}\label{limit2}
		Assume $u\in \Omega(\bm,\F)$ and consider the decomposition \eqref{candec} for $u$, that is,
        $u=\a+ds\wedge\b$. Then we have
		\be\label{lemeq1}\int_Y|\b(-1,y)|^2\dvol_Y\leq \frac{C\int_{[-2,2]\times Y}|\Delta_T u|^2+|D_T u|^2+|u|^2\dvol_{\bm}}{\sqrt{T}},\ee
		and
		\be\label{lemeq2}\int_Y|\a(1,y)|^2\dvol_Y\leq \frac{C\int_{[-2,2]\times Y}|\Delta_T u|^2+|D_T u|^2+|u|^2\dvol_{\bm}}{\sqrt{T}}.\ee
		
	Assume $w_i,i=1,2\in\Omega_\bd(\bm_i,\F_i)(T)$. Then similar statements holds for $w_i,i=1,2.$
	\end{lem}
	\begin{proof}

Let $\rho\in C^\infty(-2,2)$, such that $\rho|_{[-7/4,7/4]}\equiv1$. Then we can regard $\rho$ as a smooth function on $\bm$ with support inside $(-2,2)\times Y$.

Then, for any \( s \in [-7/4, 7/4] \), applying \eqref{new10} to \( D_T \rho \, ds \wedge \beta \) and using arguments similar to those in the proof of \eqref{split-eigen3}, we obtain
\be\ba\label{DTa}
&\ \ \ \ \int_Y|D_T\rho ds\wedge\b|^2\dvol_Y\leq C\int_{[-2,2]\times Y}|\Delta_T u|^2+|D_T u|^2+|u|^2\dvol_{\bm}.
\ea
\ee
		Using the same notation as in the proof of Lemma \ref{limit1}, the estimates \eqref{new50}, \eqref{DTa}, and Lemma \ref{limit1} imply that  \def\htl{\tilde{h}}
		\begin{align}\begin{split}\label{v1}
				\inf_{s\in[-1,-1+\sqt]}\htl(s)\geq 
				\htl(-1)-\frac{C\int_{[-2,2]\times Y}|\Delta_T u|^2+|D_T u|^2+|u|^2\dvol_{\bm} }{\sqrt{T}}.
		\end{split}\end{align}
		Let $\eta\in C_c^\infty(-2,0)\subset C^\infty(\R)$, such that $\eta|_{[-3/2,-1/2]}\equiv1$. Then $\eta$ could be regarded as a smooth function on $\bm$ with support inside $[-2,0]\times Y.$

		We compute (refer to the explanations of inequalities provided below),
		\begin{align}
			\begin{split}\label{v2}
				& C\left(\sqrt{T}\htl(-1)-\int_{[-2,2]\times Y}|\Delta_T u|^2+|D_T u|^2+|u|^2\dvol_{\bm}\right) \leq\int_{-1+e^{-T^2}}^{-1+\sqt}\int_YT|\b(s,y)|^2\dvol_Yds\\
				&\leq \int_{-1+e^{-T^2}}^{-1+\sqt}\int_Y|D\b(s,y)\wedge ds|^2+f_T''|\b\wedge ds|^2+|f_T'|^2|\beta\wedge ds|^2\dvol_Yds\\
				&\leq \int_{-1+e^{-T^2}}^{0}\int_Y|D\eta \b(s,y)\wedge ds|^2+f_T''|\eta \b\wedge ds|^2+|f'_T|^2\eta^2|\beta\wedge ds|^2\dvol_Yds\\
				\end{split}
                \end{align}
Here the first inequality follows from \eqref{v1}. For the inequality in the second line, by Condition \eqref{C}, on $[-1+e^{-T^2},-1/2]$, $f_T''=T$, and the other terms are positive. The inequality in the third line follows from the construction of $\eta$ and by Condition \eqref{C} and \eqref{c}, on $\qty[-1+\sqt,0]$, $f_T''+|f_T'|^2\geq0$ if $T$ is large. 

              Applying integration by parts and noting that \( |f_T'(-1+e^{-T^2})| \leq 1 \) by Condition \eqref{C},
                \begin{align}\begin{split}\label{eq34111}
                &\ \ \ \ \int_{-1+e^{-T^2}}^{0}\int_Y|D\eta \b(s,y)\wedge ds|^2+f_T''|\eta \b\wedge ds|^2+|f'_T|^2\eta^2|\beta\wedge ds|^2\dvol_Yds
                \\&\leq \int_{-1+e^{-T^2}}^{0}\int_Y|D_T\eta \b(s,y)\wedge ds|^2\dvol_Yds+ \int_Y|f'_T||\b|^2(-1+e^{-T^2},y)\dvol_Y\\
				&\leq \int_{\bm}|D_T\eta \b\wedge ds|^2\dvol_{\bm}+C\int_Y|\b|^2(-1+e^{-T^2},y)\dvol_Y\\
				&\leq \int_{\bm}|D_T\eta u|^2\dvol_{\bm}+C\int_Y|\b|^2(-1+e^{-T^2},y)\dvol_Y\mbox{ (By \eqref{addeq5})}
			\end{split}
		\end{align}


		According to (\ref{v2}), \eqref{eq34111}, Leibniz rule \eqref{addeq1} and Lemma \ref{limit1},
		$$\htl(-1)\leq \frac{C\int_{[-2,2]\times Y}|\Delta_T u|^2+|D_T u|^2+|u|^2\dvol_{\bm}}{\sqrt{T}}.$$

	\end{proof}
	\fi
	\fi
	
	Note that $T^2a^2 - T \geq T$ if $|a| \geq \frac{\sqrt{2}}{\sqrt{T}}$. One can see $\Delta_T \geq T$ on the tube $[-1,1]\times Y$, except when $||s| - 1| \leq \frac{\sqrt{2}}{\sqrt{T}}$, a very small region for large $T$. Thus, we expect that if \( u \) is an eigenform of \( \Delta_T \), its \( L^2 \)-norm on the tube \( [-1,1] \times Y \) should be small. This expectation is supported by the following lemma.
	
	\begin{lem}\label{limit0}
		Suppose $u,w_i,i=1,2$ meet the same conditions as in Lemma \ref{limit1}. Then 
		\[\int_{-1}^1\int_Y|u|^2(s,y)\dvol_Yds\leq \frac{C\int_{\bm}|D_T u|^2+|u|^2\dvol_{\bm}}{\sqrt{T}}.\]
		Similar statements hold for $w_i,i=1,2.$
	\end{lem}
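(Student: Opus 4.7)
The plan is to deploy a localization argument for the Witten Laplacian on the slab $[-1,1]\times Y$, exploiting the fact that on this region the Agmon potential $|\nabla f_T|^2$ dominates the Weitzenb\"ock perturbation $L_{f_T}$ away from the thin transition zones near $s=\pm 1$. Concretely, I fix a large constant $a>0$ (depending only on the universal constants $\Cn_1,\Cn_4$ from \cref{aw}) and choose a smooth cutoff $\eta$ on $\bm$ with $\eta\equiv 1$ on $[-1+2aT^{-1/2},1-2aT^{-1/2}]\times Y$, $\supp(\eta)\subset[-1+aT^{-1/2},1-aT^{-1/2}]\times Y$, and $|d\eta|\leq C\sqrt{T}$.

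Pairing $\Delta_T u=\l_k(T)u$ with $\eta^2 u$, integrating by parts, applying the Bochner--Witten identity $\Delta_T=\Delta+|\nabla f_T|^2+L_{f_T}$, and absorbing the $\eta$-derivative cross terms by AM--GM yields an inequality of the form
\[
\int_{\bm}\eta^2\bigl(|\nabla f_T|^2-C|L_{f_T}|\bigr)|u|^2\dvol_{\bm}\leq \l_k(T)\int_{\bm}\eta^2|u|^2\dvol_{\bm}+C\int_{\bm}|d\eta|^2|u|^2\dvol_{\bm}.
\]
On $\supp(\eta)$ the construction of $f_T$ forces $|\nabla f_T|^2\geq ca^2T$ (either from condition (c) on $[-1/2,1/2]$, or from $|f_T'(s)|=T(1-|s|)$ on the remaining intervals) while $|L_{f_T}|\leq \Cn_4 T$. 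For $a$ sufficiently large in terms of $\Cn_4$, one has $|\nabla f_T|^2-C|L_{f_T}|\geq c'T$ on $\supp(\eta)$, and since Lemma~\ref{o1} bounds $\l_k(T)$ uniformly in $T$, rearrangement gives $\int\eta^2|u|^2\leq (C/T)\int|d\eta|^2|u|^2$ once $T$ is large. The slicewise estimate $\int_Y|u|^2(s,y)\dvol_Y\leq C(\l+1)$ from \eqref{new10}, together with $|d\eta|^2\leq CT$ and $s$-length of $\supp(d\eta)$ of order $T^{-1/2}$, bounds the right-hand side by $C\sqrt{T}(\l+1)$, giving $\int\eta^2|u|^2\leq C(\l+1)/\sqrt T$. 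Adding the contribution of the two uncovered strips of total $s$-length $O(T^{-1/2})$, each handled directly by \eqref{new10}, produces the claimed bound for $u$.

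The argument for $w_i$ on $\bm_i$ is essentially identical; the only novelty is the interior boundary $\{0\}\times Y$ at which integration by parts produces boundary terms, but these vanish thanks to the absolute condition $i_{\partial_s}w_1=i_{\partial_s}d_T w_1=0$ on $\{0\}\times Y$ (respectively the dual relative condition for $w_2$). The main technical obstacle is the constant calibration: both $|\nabla f_T|^2$ and $|L_{f_T}|$ are of size $T$ on a neighborhood of $s=\pm 1$, so $a$ must be chosen large enough (in terms of $\Cn_4$) to ensure a $T$-independent positive lower bound on $|\nabla f_T|^2-C|L_{f_T}|$ on $\supp(\eta)$, and the Cauchy--Schwarz absorption of the cross terms must be carried out so as not to consume the necessary coefficient in front of $|\nabla f_T|^2$.
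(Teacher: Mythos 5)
Your argument is correct and follows essentially the same route as the paper's: an Agmon-type localization at scale $T^{-1/2}$ exploiting that the potential $|\nabla f_T|^2 + L_{f_T}$ dominates by $\gtrsim T$ on the interior of the slab, combined with the slicewise $L^2$-bound \eqref{new10} to handle the $O(T^{-1/2})$-width zones near $s=\pm 1$. The only cosmetic difference is that you implement the localization with a smooth IMS cutoff $\eta$ (so the localization error appears as a $|d\eta|^2|u|^2$ commutator term), whereas the paper integrates by parts directly on the hard slab $[-1+\sqrt{2/T},1-\sqrt{2/T}]\times Y$, pushing the localization error into boundary integrals controlled by \eqref{new10}--\eqref{new1}; this also lets the paper avoid the small rearrangement you need (which is harmless here, since the desired estimate is trivially true once $\lambda \gtrsim T$, but is worth noting since Lemma~\ref{o1} applies only to the $k$-th eigenvalue, not to the general $\lambda$ of Lemma~\ref{limit1}).
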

	\begin{proof}
		By Lemma \ref{limit1}, one can show that
		\be\label{smallint}\int_{||s|-1|\leq \sqt}\int_Y|u|^2(s,y)\dvol_Yds\leq \frac{C\int_{\bm}|D_T u|^2+|u|^2\dvol_{\bm}}{\sqrt{T}}.\ee
		We compute (refer to the explanations of inequalities provided below for each line),
		\begin{align*}
			&\ \ \ \ \int_{-1+\sqt}^{1-\sqt}\int_Y {T}u^2(s,y)\dvol_Yds\leq C\int_{-1+\sqrt{\frac{2}{{T}}}}^{1-\sqt}\int_Y |D u|^2+| f'_{T}|^2u^2+\lan L_Tu,u\ran\dvol_Yds\\
			&\leq C\int_{-1+\sqrt{\frac{2}{{T}}}}^{1-\sqt}\int_Y |D_Tu|^2\dvol_Yds+C'\int_Y|f_T'||u|^2\dvol_Y\Big|_{s=\pm(-1+\sqrt{\frac{2}{T}})}\\
			&\leq C\int_{\bm} |D_Tu|^2\dvol_{\bm}+C'\sqrt{T}\int_Y|u|^2\dvol_Y\Big|_{s=\pm(-1+\sqrt{\frac{2}{T}})}\\
			&\leq C''\sqrt{T}\int_{\bm}|D_T u|^2+|u|^2\dvol_{\bm}\mbox{ (By Lemma \ref{limit1})}.\end{align*}
		Here the first inequality  follows from the fact that on $\qty[-1+\sqt,1-\sqt]$, $-|f_T''|+|f_T'|^2\geq CT$ if $T$ is large (By Condition \eqref{C} and \eqref{c}), and $|Du|^2$ is obviously nonnegative. The second inequality follows from integration by parts. The third inequality follows from the fact that $|f_T'|\qty(\pm\big(1- \sqt\big))=\sqrt{2}\sqrt{T}$, and $\sqrt{2}$ is absorbed by $C$.
		
		Thus,
		\be\label{bigint}\int_{-1+\sqrt\frac{2}{{T}}}^{1-\sqt} \int_Y|u(s,y)|^2\dvol_Ydu\leq\frac{C\int_{\bm}|D_T u|^2+|u|^2\dvol_{\bm}}{\sqrt{T}}.\ee
The lemma for $u$ now follows from \eqref{smallint} and \eqref{bigint}.
		
		Now we prove the lemma for $w_1$. Assume that $w_1=\a+ds\wedge\b$ on the tube. Let $\eta\in C^\infty(-\infty,0]$, such that $0\leq \eta\leq 1$, $\eta|_{(-\infty,-1/2]}\equiv1$, $\eta(s)=0$ if $s\in[-1/4,0].$ We can regard $\eta$ as a smooth function on $\bm_1.$
		
		Proceeding as before, one can show that \be\label{addeq11}\int_{-1}^0\int_Y|\eta w_1|^2\dvol_Y ds\leq \frac{C\int_{\bm_1}|D_T w_1|^2+|w_1|^2\dvol_{\bm}}{\sqrt{T}}.\ee
		Next, we would like to estimate $(1-\eta) w_1$. Note that
		\begin{align}
			\begin{split}\label{mod21}
				&\ \ \ \ C\int_{\bm_1}|D_T w_1|^2+|w_1|^2\dvol_{\bm_1}\geq \int_{\bm_1}\lan D_Tw_1,D_T(1-\eta)^2w_1\ran\dvol_{\bm_1}\\&=\int_{\bm_1}\lan Dw_1,D(1-\eta)^2w_1\ran+\lan L_{T}w_1,(1-\eta)^2w_1\ran+|f_T'|^2(1-\eta)^2|w_1|^2\dvol_{\bm_1} \quad\text{(By \eqref{addeq1})}\\
				&-\int_Y \lan w_1, i_{\frac{\p}{\p s}}dw_1\ran(0,y)\dvol_Y \quad\text{(Integration by parts)}.
		\end{split}\end{align}
		Note that $ i_{\frac{\p}{\p s}}d_Tw_1(0,y)=0$ and $f_T'\geq 0$
		\be\label{mod211}
		\int_Y \lan  w_1,i_{\frac{\p}{\p s}}dw_1\ran(0,y)\dvol_Y=	\int_Y \lan  w_1,i_{\frac{\p}{\p s}}d_Tw_1\ran(0,y)\dvol_Y-\int_Yf_T'|\a|^2(0,y)\dvol_Y\leq0.
		\ee
		While by \eqref{addeq1},
		\begin{align}\begin{split}\label{mod212}&\ \ \ \ \int_{\bm_1}\lan Dw_1,D(1-\eta)^2w_1\ran+\lan L_{T}w_1,(1-\eta)^2w_1\ran+|f_T'|^2(1-\eta)^2|w_1|^2\dvol_{\bm_1}\\
				&\geq\int_{\bm_1}(1-\eta)^2|Dw_1|^2-2|\eta'||1-\eta|| Dw_1||w_1|+\lan L_{T}w_1,(1-\eta)^2w_1\ran\\
				&+|f_T'|^2(1-\eta)^2|w_1|^2\dvol_{\bm}\geq\int_{\bm_1}-|\eta'|^2|w_1|^2+C'T^2(1-\eta)^2|w_1|^2\dvol_{\bm_1}.
			\end{split}
		\end{align}
		The last inequality follows from the fact that for $T$ being large, $|f_T'|^2-|f_T''|\geq C'T^2$ when $s\in [-1/2,0]$ and arithmetic-geometric mean inequality.

		It follows from \eqref{mod21}, \eqref{mod211} and \eqref{mod212}  that
		\be\label{addeq12}\int_{-1}^0\int_Y|(1-\eta) w_1|^2\dvol_Y ds=\int_{\bm}|(1-\eta) w_1|^2\dvol_{\bm} \leq \frac{C\int_{\bm_1}|D_Tw_1|^2+|w_1|^2\dvol_{\bm_1}}{{T^2}}.\ee
		The lemma for $w_1$ then follows from \eqref{addeq11} and \eqref{addeq12}. Similarly, we have the lemma for $w_2.$
	\end{proof}
	
	\subsection{Proof of Theorem \ref{eigencon}}\label{proof-eigencon}
 The following proof is based on a standard Rayleigh quotient  argument, using the established estimates in \Cref{o1}–\Cref{limit0}.
		In the proof, we will identify $u\in \Omega(M_1\cup M_2,\F|_{M_1\cup M_2})$ with $(u|_{M_1},u|_{M_2})\in\Omega(M_1,F_1)\oplus\Omega(M_2,F_2)$. \ \\
		$\bullet$ We will show that $\lim\sup_{T\to\infty}\lambda_{k}(T)\leq \lambda_k.$\\
		Let $u_j=(u_{j,1},u_{j,2})$ be the $j$-th eigenvalue of $\Delta_1\oplus\Delta_2$ on $\Omega_{\rel}(M_1;F_1)\oplus \Omega_{\abs}(M_2;F_2)$ ($1\leq j\leq k$). 
		
		To use the Rayleigh quotient argument, we would like to find a nice extension of $u$ to a differential form on $\Omega(\bm,\F).$ Let $\eta\in C^\infty([0,1])$, such that $\eta|_{[0,1/4]}\equiv0$, $\eta|_{[1/2,1]}\equiv1.$
		For any $u=(v_1,v_2)\in \Omega_{\abs}(M_1;F_1)\oplus \Omega_{\rel}(M_2;F_2)$, let $Q_T:\Omega_{\abs}(M_1;F_1)\oplus\Omega_{\rel}(M_2,F_2)\to W^{1,2}\Omega(\bm;\F)$, such that,
		\begin{equation*}
			Q_T({u})(x)=\begin{cases}
				v_i(x), \mbox{ if  $x\in M_i$;}\\
				\eta(-s)v_1(-1,y)e^{-f_T(s)-T/2}, \mbox{if $x=(s,y)\in [-1,0]\times Y$;}\\
				\eta(s)v_2(1,y)e^{f_T(s)-T/2}, \mbox{if $x=(s,y)\in [0,1]\times Y$.}
			\end{cases}
		\end{equation*}
		Suppose $v_i=\a_i+ds\wedge\b_i(i=1,2)$ on the tube $[-2,2]\times Y$.
		
	 One can see easily that although $Q_T(u)$ is not smooth, but its weak derivative exists:
		\begin{equation}\label{qtdt}D_TQ_T(u)(x)=\begin{cases}D_Tv_i=Dv_i, \mbox{ if }x\in M_i;\\
				D^Y\a_1(-1,y)e^{-f_T-T/2},\mbox{ if } x=(s,y)\in[-1,-1/2]\times Y; \\
				D^Yds\wedge\b_2(1,y) e^{f_T-T/2}, \mbox{ if }x=(s,y)\in[1/2,1]\times Y;\\
				O(e^{-cT})(\a_1(-1,y)+D^Y\a_1(-1,y)), \mbox{ if }x=(s,y)\in[-1/2,0];\\
				O(e^{-cT})(ds\wedge\b_2(1,y)+D^Yds\wedge\b_2(1,y)), \mbox{ if }x=(s,y)\in[0,1/2].
		\end{cases}\end{equation}
		Here the first line is obvious. The second and the third line follow from the boundary conditions and the fact that $(\frac{\p}{\p s}+f_T') e^{-f_T-T/2}=0,(-\frac{\p}{\p s}+f_T') e^{f_T-T/2}ds=0$. The last two lines follow from \eqref{b}, \eqref{c} and \eqref{D} and the fact that $T^2$ could be absorbed by $e^{-aT},\forall a>0$.
		
		Let $\bar{u}=Q_T(u)$, then one can see that $\dim span\{\bar{u}_j\}_{j=1}^k=k$. Moreover, by trace formula \cite[\S 5.5]{evans2022partial} and G\aa rding's inequality, one can show that for any $u=(v_1,v_2)\in span\{u_j\}_{j=1}^k,$
		\begin{equation}\label{trace}\int_Y|v_i((-1)^{i},y)|^2+|D^Y v_i((-1)^i,y)|^2\dvol_Y\leq C(1+\lambda_k^2)\int_{M_i}|v_i|^2\dvol_{\bm}.\end{equation}
		
		Also,
		\begin{align*}
			&\ \ \ \ \int_{\bm} |D_T \bar{u}|^2\dvol_{\bm}\\
			&=\int_{M_1\cup M_2}|D u|^2\dvol+\int_{-1}^0\int_Y |D_T \bar{u}|^2\dvol_Yds+\int_{0}^1\int_Y |D_T \bar{u}|^2\dvol_Yds\\&=I+II+III.
		\end{align*}
		First, notice that $I\leq\lambda_k\int_{M_1\cup M_2}|u|^2\dvol\leq \lambda_k\int_{\bm}|\bar{u}|^2\dvol .$\\
		By \eqref{qtdt} and \eqref{trace},
		\begin{align}\begin{split}\label{ineqii}
				&\ \ \ \ II\leq\int_{-1}^{-1/2}\int_Y|D^Y \a_1|^2(-1,y)e^{-2f_T(s)-T}\dvol_Yds\\
                &+C\int_{-1/2}^{0}\int_Ye^{-cT}(|\a_1|^2+|D^Y\a_1|^2)(-1,y)\dvol_Yds\\
				&\leq \frac{C_2(1+\lambda_k^2)}{\sqrt{T}}\int_{M_1\cup M_2}|\bar{u}|^2\dvol_{\bm}\leq \frac{C_2(1+\lambda_k^2)}{\sqrt{T}}\int_{\bm}|\bar{u}|^2\dvol_{\bm}.
			\end{split}
		\end{align}
		Here $\sqrt{T}$ in the denominator comes from the Gauss-type integral.
		
		Similarly, $III\leq \frac{C_2(1+\lambda_k^2)}{\sqrt{T}}\int_{\bm}|\bar{u}|^2\dvol_{\bm}.$
		By Rayleigh quotient, $\l_{k}(T)\leq \l_k+\frac{2C_2(1+\lambda_k^2)}{\sqrt{T}}$. Consequently, as $T\to \infty$, $\lim\sup_{T\to\infty}\lambda_{k}(T)\leq \lambda_k.$\ \\ \ \\
		$\bullet$ We aim to prove that \( \lim\inf_{T\to\infty} \lambda_k(T) \geq \lambda_k \). To achieve this, we need to construct a suitable ``restriction" mapping from \( \Omega(\bm, \F) \) to \( \Omega(M_1, F_1) \oplus \Omega(M_2, F_2) \).
        
Let \( \rho \in C_c^\infty((-2,2)) \) be a smooth cutoff function satisfying  
\[
\rho|_{(-2,-1-2T^{-1/4})\cup(1+2T^{-1/4},2]} \equiv 0, \quad \text{and} \quad \rho|_{[-1-T^{-1/4},1+T^{-1/4}]} \equiv 1.
\]  
For \( u \in \Omega(\bm, \F) \), consider its decomposition \eqref{candec},  
\[
u = \alpha + ds \wedge \beta.
\]  
Consider the operator \( R_T: \Omega(\bm, \F) \to \mathcal{A}_{\mathrm{abs}}(M_1, F_1) \oplus \mathcal{A}_{\mathrm{rel}}(M_2, F_2) \)  given by  
\[
R_T(u)(x) =
\begin{cases}
    u(x), & \text{if } x \in \bm - [-2,2] \times Y, \\
    u(s,y) - \rho(s) ds \wedge \beta(-1,y), & \text{if } (s,y) \in [-2,-1] \times Y, \\
    u(s,y) - \rho(s) \alpha(1,y), & \text{if } (s,y) \in [1,2] \times Y.
\end{cases}
\]  
From the estimate  
\[
\int_Y |D^Y ds \wedge \beta|^2 (-1,y) \, \dvol_Y \leq \int_Y |D ds \wedge \beta|^2 (-1,y) \, \dvol_Y = \int_Y |D_T ds \wedge \beta|^2 (-1,y) \, \dvol_Y,
\]  
and using \eqref{lemeq1}, \eqref{cor-eq12}, and the properties of \( \rho \), we obtain that if \( u \) is a unit eigenform of \( \Delta_T \) with eigenvalue \( \lambda \), then  
\begin{equation} \label{prove-thm-eq2}
\int_{M_1} \Big|D\big( \rho(s) ds \wedge \beta(-1,y)\big)\Big|^2 \, d\operatorname{vol}_{M_1} \leq \frac{C(1+\lambda^2)}{T^{1/4}}.
\end{equation}  
Similarly, \begin{equation} \label{prove-thm-eq3}
\int_{M_1} \Big|D \big(\rho(s) \a(1,y)\big)\Big|^2 \, d\operatorname{vol}_{M_1} \leq \frac{C(1+\lambda^2)}{T^{1/4}}.
\end{equation}  

Let \( u_j(T) \) denote the \( j \)-th eigenform of \( \Delta_T \). Applying \eqref{prove-thm-eq2}, \eqref{prove-thm-eq3}, \Cref{o1}, \Cref{limit0}, and the Rayleigh quotient argument, we conclude  
\[
\lambda_k(T) + C(\Lambda_k^2+1)T^{-1/4} \geq \sup_{v \in \mathrm{span} \{ R_T(u_j(T)) \}_{j=1}^k} \frac{\int_{M_1\cup M_2} |Dv|^2 \, d\operatorname{vol}_{\bm}}{\int_{M_1\cup M_2} |v|^2 \, d\operatorname{vol}_{\bm}} \geq \lambda_k.
\]  
Thus, we obtain  
\[
\liminf_{T\to\infty} \lambda_k(T) \geq \lambda_k.
\]
	 \ \\
		$\bullet$ Similarly, one can show that $\lim_{T\to\infty}\tl_{k}(T)=\l_k.$

	\section{Large Time Contributions}\label{conl}
	
	\def\H{\mathcal{H}}
	\def\lan{\langle}
	\def\ran{\rangle}
	In this section, we prove Theorem \ref{larcon}. The key idea is to use the Rayleigh quotient argument to establish a uniform lower bound for \(\l_k(T)\), which in turn leads to a uniform upper bound for \(|\Tr_s(N\exp(-t\Delta_T')(1-\P^\delta(T)))|\). Finally, applying the dominated convergence theorem completes the proof of Theorem \ref{larcon}.
	
	Let $M_3=[-1,0]\times Y$, $M_4=[0,1]\times Y$, then $\bm=M_1\cup M_2\cup M_3\cup M_4$.
	Let $\Delta_{i,T,\bd}$ be the restriction of $\Delta_T$ on $M_i$ with boundary condition $\bd$(i=1,2,3,4).
	For $\bd=\rel,\abs,D$ and $N$,  we mean relative, absolute, Dirichlet, and Neumann boundary conditions respectively.
	Let $\l_{k,bd}(T)$ be the $k$-the eigenvalues of $\Delta_{1,T,\bd}\oplus\Delta_{2,T,\bd}\oplus\Delta_{3,T,\bd}\oplus\Delta_{4,T,\bd}$ (acting on $\Omega_{\bd}^*(M_1;F_1)\oplus\Omega_{\bd}^*(M_2;F_2)\oplus\Omega_{\bd}^*(M_3;\F|_{M_3})\oplus\Omega_{\bd}^*(M_4;\F|_{M_4})$), it follows from the  Rayleigh quotient argument that (see for example, \cite[Lemma 3.1]{tachizawa1992eigenvalue})
	\be\label{ineq}\l_{k,D}(T)\geq\l_{k}(T)\geq \l_{k,N}(T).\ee

Using an inequality similar to \eqref{mod211}, we observe that if \( w_i \in \Omega_{\bd}(\bm_i, \F_i)(T) \), 
\[
\int_{\bm_i} |D_T w_i|^2 \, \dvol_{\bm_i} \geq \int_{\bm_i} |D w_i|^2 \, \dvol_{\bm_i} + \langle L_T w_i, w_i \rangle + |f_T'|^2 |w_i|^2 \, \dvol_{\bm_i}.
\]  
Then by the Rayleigh quotient argument, one still has 
\be\label{ineq1}
\tilde{\lambda}_k(T) \geq \lambda_{k,N}(T).
\ee
	Before moving on, let's study the following one-dimensional model problem first. Let $\Delta^{\R}_{T,N,\pm}:=-\partial_s^2+|f_T'|^2\pm f_T''$ on $[-1,0]$ with Neumann boundary conditions, and $\l^{\R}_{k,T,N,\pm}$ be the $k$-th eigenvalues of $\Delta^{\R}_{T,N,\pm}.$
	
	
	\begin{lem}\label{51}
		For $k\geq 2, T\geq1$, one has $\l^{\R}_{k,T,N,\pm}\geq v_k(T)$. Here $\{v_k(T)\}_{k=1}^\infty$ is the collection of $\Big\{T\max\{c_1l^2-c_2,0\}\Big\}_{l=0}^\infty\bigcup \Big\{c_3l^2\Big\}_{l=0}^\infty$, listed in increasing order and counted with multiplicity, and constants $c_1$, $c_2$, and $c_3$ are independent of $T.$  
	\end{lem}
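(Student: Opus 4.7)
The plan is to apply Neumann--Neumann bracketing at the interior point $s=-1/2$, yielding $\l^{\R}_{k,T,N,\pm}\geq\tilde{\l}_k$, where $\tilde{\l}_k$ is the $k$-th element (counted with multiplicity) of the sorted union of the Neumann spectra on $I_L:=[-1,-1/2]$ and $I_R:=[-1/2,0]$. I will show that $I_R$ produces a sequence dominating $\{c_3 l^2\}$ and $I_L$ one dominating $\{T\max\{c_1 l-c_2,0\}\}$, then combine these via a sorted-union comparison.

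On $I_R$, the bounds $|f_T'|\geq \Cn_1 T$ and $|f_T''|\leq \Cn_2 T$ force $V_\pm:=|f_T'|^2\pm f_T''\geq \Cn_1^2 T^2/2$ for $T$ large. Combined with the Neumann Laplacian eigenvalues $4\pi^2(l-1)^2$ on $[-1/2,0]$, the $l$-th Neumann eigenvalue on $I_R$ satisfies $\l_l^R\geq \pi^2 l^2$ for every $l\geq 1$, providing the $\{c_3 l^2\}$ sequence with $c_3=\pi^2$. On $I_L$, odd symmetry together with property (b) in the definition of $f_T$ give $V_\pm(s)\geq T^2(s+1)^2-CT$ uniformly on $I_L$, absorbing the bulk $\pm f_T''$ and the perturbation on the exponentially-thin transition layer $|s+1|\leq e^{-T^2}$, where $T^2(s+1)^2$ is negligible while $V_\pm=O(T)$. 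The rescaling $t=\sqrt{T}(s+1)$ sends $I_L$ onto $[0,\sqrt{T}/2]$ and converts $-\partial_s^2+T^2(s+1)^2$ into $T(-\partial_t^2+t^2)$ with Neumann conditions. The key input is then a uniform-in-$R$ Weyl bound: \emph{the $l$-th Neumann eigenvalue of $-\partial_t^2+t^2$ on $[0,R]$ is at least $c_1' l-c_2'$ for universal $c_1',c_2'>0$}, which follows from the one-dimensional counting estimate $N(E)\leq \pi^{-1}\int_0^R\sqrt{(E-t^2)_+}\,dt+O(1)$ by case analysis $E\lessgtr R^2$ and optimization in $R$. Hence $\l_l^L\geq T(c_1 l-c_2)$ for constants depending only on $\Cn_1,\ldots,\Cn_4$.

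For $k\geq 2$, the bound $\tilde{\l}_k\geq v_k$ combines two observations. First, when $v_k=0$, the standard Witten-Laplacian picture (made rigorous via trial-function computations using a cutoff of $e^{-f_T}$ as the quasi-mode) shows that $\Delta^{\R}_{T,N,\pm}$ has at most one near-zero eigenvalue for $T$ large, while all other eigenvalues lie above $cT$ for some $c>0$. After bracketing, the same picture on $I_L\oplus I_R$ implies $\tilde{\l}_k\geq 0$ for $k\geq 2$. Second, when $v_k>0$, the value $v_k$ coincides with the $k$-th element of the sorted union of $\{T(c_1 l-c_2)\}_l\cup\{c_3 l^2\}_l$, since both sequences share identical positive parts and differ only in the non-positive tails; elementwise monotonicity of sorted unions then gives $\tilde{\l}_k\geq v_k$. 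The central technical obstacle is the uniform-in-$R$ Weyl estimate for the harmonic oscillator on $[0,R]$, handled by the two-regime trade-off described above.
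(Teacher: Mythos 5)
Your proof follows the same high-level strategy as the paper's---Neumann bracketing on $[-1,0]$ plus a harmonic-oscillator rescaling $\tilde s=\sqrt{T}(s+1)$ near the degenerate critical point $s=-1$---but the choice of split point changes the technical burden significantly. The paper cuts at the $T$-dependent point $s=-1+1/\sqrt{T}$, so that after rescaling, $I_1$ becomes the \emph{fixed} interval $[0,1]$, on which $\tilde\Delta^\R_{T,N,\pm}\geq T(-\partial_{\tilde s}^2+\tilde s^2-C)$. One then only needs the elementary fact that the Neumann eigenvalues of a single $T$-independent operator $-\partial_{\tilde s}^2+\tilde s^2-C$ on $[0,1]$ form a fixed sequence $\mu_l\to\infty$, so $\mu_l\geq c_1l-c_2$. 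Your cut at the fixed point $s=-1/2$ instead forces $I_L$, after rescaling, onto the $T$-dependent interval $[0,\sqrt{T}/2]$, which is why you need a \emph{uniform-in-$R$} lower bound $\nu_l\geq c_1'l-c_2'$ for the Neumann spectrum of $-\partial_t^2+t^2$ on $[0,R]$. That bound is true (the two-regime Weyl count with uniform $O(1)$ error does work for this Sturm--Liouville problem), but it is genuinely more delicate than the fixed-interval eigenvalue asymptotics the paper uses, and you only sketch it; as written, it is the weakest link in the argument. Conversely, your $I_R=[-1/2,0]$ bound exploits $V_\pm\geq\Cn_1^2T^2/2$ there and gives $\l_l^R\geq\pi^2 l^2$ outright, which is cleaner than the paper's $I_2=[-1+1/\sqrt{T},0]$ estimate (the paper only uses $V_\pm\geq 0$ on $I_2$, yielding $\pi^2(l-1)^2$). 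Finally, the Witten-gap digression you invoke for the $v_k=0$ case is unnecessary: $\Delta^\R_{T,N,\pm}$ is the restriction of a Witten Laplacian and is already nonnegative, so $\l_k^\R\geq 0=v_k$ holds trivially whenever $v_k=0$; no spectral-gap input is required there. In short, the proposal is correct in outline and buys you a $T$-independent decomposition, at the cost of a nontrivial uniform Weyl estimate that would need to be proved in full; the paper's $T$-dependent cut avoids that estimate entirely.
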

	\begin{proof}
		Let $I_1:=[-1,-1+\frac{1}{\sqrt{T}}]$, $I_2:=[-1+\frac{1}{\sqrt{T}},0]$. It follows from the constructions of $f_T$ that when $T$ is large enough $\Delta^\R_{T,N,\pm}\geq-\p_{s,N}^2$ on $I_2$. That is, for all $\phi\in C^\infty(I_2)$ with $\phi'(-1+\frac{1}{\sqrt{T}})=\phi'(0)=0$,
		\be\label{est11}\int_{I_2}\Delta^\R_{T,N,\pm}\phi\cdot\phi ds\geq \int_{I_2}-\partial_s^2\phi\cdot \phi ds.\ee
		
		For $I_1$, changing the variable $\sqrt{T}(s+1)\to \ts$, and suppose that under new coordinates, ${\Delta^\R_{T,N,\pm}}$ is given by $\tilde{\Delta}^{\R}_{T,N,\pm}.$
Then direct computations yield, on $\tilde{s}\in[0,1]$,
		
		\be\label{est12}\tilde{\Delta}^{\R}_{T,N,\pm}\geq T(-\p_{\ts}^2+\ts^2-C)\geq T(-\p_{\ts}^2-C).\ee 
		
		The lemma then follows from (\ref{est11}), (\ref{est12}) and \eqref{ineq}.
	\end{proof}
	It follows from (\ref{ineq}), \eqref{ineq1} and Weyl's law that
	\begin{lem}\label{est1} If $T\geq1$,
		$\l_k(T)\geq u_k(T)\geq u_k(1)$ and $\tilde{\l}_{k}(T)\geq u_k(T)\geq u_k(1)$.
		Here $\{u_k(T)\}_{k=1}^\infty$ is the collection of $4$ copies of $\{v_l(T)+c_4m^{2/(\dim M-1)}\}_{l=0,m=0}^\infty$ and $2$ copies of $\{c_5 l^{2/\dim M}\}_{l=0}^
		\infty$, listed in increasing order and counted with multiplicities.
		Moreover, constants $c_4$ and $c_5$ are independent of $T.$
	\end{lem}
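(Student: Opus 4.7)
The plan is to apply the domain-monotonicity inequality $\lambda_k(T) \geq \lambda_{k,N}(T)$ from \eqref{ineq}, which reduces the problem to understanding the spectrum of the decoupled Neumann-type operator $\Delta_{1,T,N} \oplus \Delta_{2,T,N} \oplus \Delta_{3,T,N} \oplus \Delta_{4,T,N}$; its ordered list of eigenvalues is simply the merged list of the eigenvalues of the four summands, so it suffices to analyze each piece separately.

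On $M_1$ and $M_2$, the function $f_T$ is constant, so $d_T = d^{\F}$ and the Witten Laplacian coincides with the ordinary Hodge Laplacian with Neumann boundary conditions. Standard Weyl asymptotics on these fixed compact manifolds with boundary yield eigenvalues bounded below by $c_5 l^{2/\dim M}$ with $c_5$ independent of $T$, producing two copies of $\{c_5 l^{2/\dim M}\}_{l=1}^\infty$ in the merged list.

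The main step is the analysis of the tube pieces $M_3 = [-1,0] \times Y$ and $M_4 = [0,1] \times Y$. Since the metric, $h^{\F}$, and $f_T$ all depend only on the $s$-coordinate, I would separate variables by decomposing a form as $\alpha(s,y) + \beta(s,y) \wedge ds$ with $\alpha, \beta$ pulled back fiberwise from $Y$. A direct computation expanding $d_T$, $d_T^*$, and $\Delta_T = D_T^2$ shows that this decomposition is preserved and that $\Delta_T$ restricts to $\Delta^{\R}_{T,N,+} \otimes 1 + 1 \otimes \Delta^Y$ on the $\alpha$-sector and $\Delta^{\R}_{T,N,-} \otimes 1 + 1 \otimes \Delta^Y$ on the $\beta \wedge ds$-sector, the $\pm$ arising from the Weitzenböck-type contribution of the Hessian piece $L_{f_T}$. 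The Neumann boundary conditions on forms decouple into Neumann conditions for the one-dimensional factor in the $s$-variable (giving exactly the $\Delta^\R_{T,N,\pm}$ of the previous lemma), while $Y$ is closed so no boundary conditions arise in the tangential factor. Combining the previous lemma with Weyl's law for $\Delta^Y$ on $Y$ (eigenvalues $\mu_m \geq c_4 m^{2/(\dim M-1)}$) gives a lower bound $v_l(T) + c_4 m^{2/(\dim M-1)}$ in each of the four sectors (two per tube), with constants independent of $T$.

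Merging the six lists in increasing order reproduces $\{u_k(T)\}_{k=1}^\infty$, and \eqref{ineq} then yields $\lambda_k(T) \geq u_k(T)$. The only delicate point I expect is verifying the $\pm$-dichotomy and the separation of variables on the tubes — in particular, that the Neumann boundary conditions on forms translate cleanly into Neumann conditions on the $s$-factor in both the $\alpha$ and $\beta\wedge ds$ sectors; once this is confirmed, the rest of the argument is a routine invocation of Weyl's law and eigenvalue monotonicity under a direct-sum decomposition.
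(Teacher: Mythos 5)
Your argument is correct and is exactly the proof the paper has in mind: it states Lemma \ref{est1} as a direct consequence of the domain-monotonicity inequality \eqref{ineq} together with Weyl's law, and your proposal supplies precisely the missing details (Neumann decoupling into four summands, separation of variables on the two tube pieces into the $\alpha$ and $\beta\wedge ds$ sectors with the one-dimensional potentials $\Delta^{\R}_{T,N,\pm}$, Weyl asymptotics on $Y$ and on $M_1$, $M_2$, and the merge-of-sorted-lists principle for eigenvalues of a direct sum). The only cosmetic quibble is that the $\pm$ assignment to the two sectors is reversed relative to the convention coming from $L_{f_T}=f_T''\,c(\p_s)\hat c(\p_s)$ (the $\alpha$ sector picks up $-f_T''$, the $\beta\wedge ds$ sector picks up $+f_T''$), but this is immaterial since both signs appear in the merged list.
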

	\begin{rem}\label{rem53}
	Two copies of the sequence \( \{v_l(T) + c_4 m^{2/(\dim M - 1)}\}_{l=0,m=0}^\infty \) come from the operators \( \Delta_{T,N,\pm}^\mathbb{R} + \Delta^Y \) on \( [-1,0] \times Y \) with Neumann boundary conditions, and another two copies of the same sequence arise from the Laplacian on \( [0,1] \times Y \) with Neumann boundary conditions. Additionally, two copies of \( \{c_5 l^{2/\dim M}\}_{l=0}^\infty \) come from the usual Hodge Laplacian (with Neumann boundary conditions) on \( M_1 \) and \( M_2 \). It is easy to see that there exist constants \(\tau, C > 0\) such that for sufficiently large \(k\),  
\be\label{unif} u_k(1) \geq C k^\tau. \ee
To establish this, it suffices to show that the sequence \(\{v_l(1) + c_4 m^{2/(\dim M - 1)}\}_{l=0,m=0}^{\infty}\) exhibits power growth. Specifically, let \(\{\omega_k\}_{k=0}^{\infty}\) be the same sequence arranged in non-decreasing order. We aim to prove that \(\omega_k \geq C' k^{\tau'}\) for some constants \(C', \tau' > 0\). By the construction of \(v_l(1)\), the sequence \(\{\omega_k\}\) corresponds to the spectrum of some elliptic operator on \([-1,0] \times Y\), guaranteeing its power growth.
	\end{rem}

	\begin{proof}[Proof of Theorem \ref{larcon}]   
First, we claim that for any \( t > 0 \),  
\be\label{point-wise}
\lim_{T \to \infty} \Tr_s\left(Ne^{-t\Delta_T'}\big(1-\P^\delta(T)\big)\right) = \sum_{i=1}^2 \Tr_s\left(Ne^{-t\Delta_i'}\right).
\ee  

This claim follows easily from the uniform lower bound \eqref{unif} and the convergence of eigenvalues. For the reader’s convenience, we provide more details below.  

Fix an orthonormal basis \( \{\phi_j\}_{j=1}^\infty \) of \( L^2(\bm) \), where \( \phi_j \) is the \( j \)-th eigenform of \( \Delta_T \). For each \( k \geq 1 \), let \( \P^k(T) \) be the projection onto the span of \( \{\phi_j\}_{j=1}^k \). Similarly, we define the projection \( \P^k \) corresponding to the eigenspaces of \( \Delta_1 \oplus \Delta_2 \).  

By \Cref{est1} and \eqref{unif}, there exist constants \( C_0, \tau_0 > 0 \) independent of \( T \), such that for all \( k \geq 1 \), we have \( \lambda_k(T) \geq C_0 k^{\tau_0} \) and \( \lambda_k \geq C_0 k^{\tau_0} \). Thus, for any \( \epsilon > 0 \), there exists \( k_0 = k_0(\epsilon, t) \) such that  
\be\label{cp1}
\left|\Tr_s\Big(Ne^{-t\Delta_T'}\big(1-\P^{k_0}(T)\big)\Big)\right| < \epsilon \quad \text{and} \quad \left|\Tr_s(Ne^{-t(\Delta_1'\oplus\Delta_2')}(1-\P^{k_0}))\right| < \epsilon.
\ee  

Moreover, by \Cref{eigencon}, there exists \( T_0 = T_0(k_0,t,\ep) \) such that for \( T \geq T_0 \),  
\be\label{cp2} 
\left|\Tr_s\left(Ne^{-t\Delta_T'}\big(1-\P^\delta(T)\big)\P^{k_0}(T)\right) - \Tr_s\left(Ne^{-t(\Delta_1'\oplus\Delta_2')}\P^{k_0}\right)\right| < \epsilon.
\ee  

The claim then follows directly from \eqref{cp1} and \eqref{cp2}.

        Let $$F(t):=\dim(M)\sum_{k=1}^\infty e^{-t\max\{u_k(1),\delta\}}.$$ Then it is clear that \begin{itemize}
            \item By \eqref{unif}, $F(t)/t\in L^1((1,\infty))$.
            \item \begin{equation}\label{oneinfty}
			t^{-1}|\Tr_s\left(Ne^{-t\Delta_T'}(1-\P^\delta(T))\right)|\leq t^{-1}F(t).\end{equation}
        \end{itemize} 
		
		Hence by \eqref{point-wise} and the dominated convergence theorem,
		\[ \lim_{T\to\infty}\int_1^\infty t^{-1}\Tr_s\qty(Ne^{-t\Delta_T'}(1-\P^\delta(T)))dt=\sum_{i=1}^2\int_1^\infty t^{-1}\Tr_s(Ne^{-t\Delta_i'})dt.\]
		Similarly, one can show that
		\[ \lim_{T\to\infty}\int_1^\infty t^{-1}\Tr_s(Ne^{-t\Delta_{T,i}'})dt=\int_1^\infty t^{-1}\Tr_s(Ne^{-t\Delta_i'})dt.\]
		
	\end{proof}
	
	\section{A Gluing Formula for Heat Trace in Small Time}\label{glus}
  In this section, we use the finite propagation speed argument to reduce the study of the heat trace in the small-time regime to a one-dimensional model (see \Cref{comparison}).  For this purpose, we first introduce several Laplacians on model spaces.
	\subsection{Laplacians on model spaces}

	Let $\Delta_{B,1}$ be the usual Hodge Laplacian on $[-2,-1]$ with absolute boundary conditions, and $k_{B,1}$ be the heat kernel for $\Delta_{B,1}$. It's easy to see that $\ker(\Delta_{B,1})$ is one-dimensional and generated by constant functions. Thus, \be \label{tracer}\Tr_s(e^{-t\Delta_{B,1}})=\lim_{t\to\infty}\Tr_s(e^{-t\Delta_{B,1}})=1.\ee
	
	Let $\Delta_{B,2}$ be the usual Hodge Laplacian on $[1,2]$ with relative boundary conditions, and $k_{B,2}$ be the heat kernel of $\Delta_{B,2}$. Similarly, \be \label{tracer1}\Tr_s(e^{-t\Delta_{B,2}})=-1.\ee
	

	Let $\bar{\Delta}_{B}$ be the usual Hodge Laplacian on $[-2,2]$ with absolute boundary condition on $-2$, relative boundary condition on $2$, and $\bar{k}_{B}$ be its heat kernel.

	We can also regard $f_T$ as a smooth function in $(-2,2)$, and let ${\Delta}^\R_T$ be the Witten Laplacian on $(-2,2)$ with respect to $f_T$, with absolute boundary condition on $-2$, and relative boundary condition on $2$, 
	and ${k}_T$ be the heat kernel for ${\Delta}^\R_T$ .
	
	On the restriction of $F|_{Y}\to Y$, let $\Delta_{Y }$ be the induced Hodge Laplacian on $\Omega(Y;F|_{Y})$.

	\subsection{Finite propogation speed}
The following lemmas are needed to reduce the problems to the one-dimensional model in the small time.
	A standard finite-propagation-speed argument (c.f. \cite[$\S$ 4.4]{taylor1996pseudodifferential}) implies that
	\begin{lem}[Finite Propagation Speed]\label{speed0}
		Let $s_0$ be a smooth section of $\Omega(\bm;\F)$ with a compact support $C$, $s_t:=\exp(\sqrt{-1}tD_T)s_0$. Let $C_t:=\{x'\in \bm: d(C,x')\leq 2t\}$, then the support of $s_t$ is inside $C_t.$
	\end{lem}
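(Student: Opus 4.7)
The plan is to derive this as an instance of the standard finite propagation speed theorem for first-order self-adjoint hyperbolic operators. The first observation is that
\[ D_T - (d^{\F}+d^{\F,*}) = df_T\wedge + i_{\nabla f_T} \]
is a \emph{self-adjoint, zeroth-order} multiplication (pointwise Clifford action by $df_T$), so $D_T$ has the same principal symbol as the plain de Rham operator $d^{\F}+d^{\F,*}$, namely $\sqrt{-1}\,c(\xi)$ with $c(\xi)=\xi^\flat\wedge-i_{\xi^\sharp}$. Since the operator norm of $c(\xi)$ equals $|\xi|$, the natural propagation speed is $1$, and the factor $2$ in the definition of $C_t$ is a comfortable overestimate.

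Concretely, I would fix $x_0 \in \bm$ with $r_0:=d(x_0,C)>2t_0$, and for $0\leq t\leq t_0$ set
\[ E(t) := \int_{B(x_0,\,r_0-2t)} |s_t|^2 \,\dvol. \]
Since $s_0\equiv 0$ on $B(x_0,r_0)$, we have $E(0)=0$, and it suffices to show $E'(t)\leq 0$: then $E(t_0)=0$, and by continuity of $s_{t_0}$ at $x_0$ one concludes $s_{t_0}(x_0)=0$, which gives the claim by varying $x_0$ over the complement of $C_{t_0}$.

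The computational heart is the pointwise conservation law
\[ \partial_t|s_t|^2 = \operatorname{div} W, \qquad W^j := \sqrt{-1}\langle c(e^j) s_t,\,s_t\rangle, \]
in any local orthonormal frame $\{e_j\}$. This follows from skew-adjointness of $c(e^j)$ together with a normal-frame computation at the base point, using that the perturbation $\hat c(df_T)$ is pointwise self-adjoint, so $\langle\hat c(df_T)s_t,s_t\rangle\in\mathbb{R}$ and cancels in the expression $\sqrt{-1}\langle D_T s_t,s_t\rangle - \sqrt{-1}\langle s_t,D_T s_t\rangle$. The divergence theorem on the shrinking ball, combined with the pointwise estimate $|W\!\cdot\!\nu|\leq|s_t|^2$ (from $c(\nu)^2=-|\nu|^2=-1$, which forces the operator norm of $c(\nu)$ to be $1$), yields
\[ E'(t) = \int_{\partial B(x_0,\,r_0-2t)} \bigl(W\cdot\nu - 2|s_t|^2\bigr) \, dS \leq 0, \]
as desired.

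The main obstacle is really just bookkeeping: one must verify the divergence identity at a point by choosing a normal frame so that Christoffel-type terms drop out, and one must confirm that the $df_T$-contributions truly cancel in pairs, both of which are local, pointwise checks standard for Dirac-type operators. The generous factor of $2$ in the propagation radius removes any need for sharp constants. An alternative, perhaps cleaner, route is to approximate $\exp(\sqrt{-1}tD_T)s_0$ by a Chernoff-type product formula and invoke the general finite propagation speed theorem for symmetric first-order systems directly, bypassing the energy calculation altogether.
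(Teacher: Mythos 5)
Your energy-estimate argument is precisely the ``standard argument'' the paper invokes without spelling out: $D_T$ is a formally self-adjoint Dirac-type operator (the Witten term $\hat c(df_T)$ and the non-unitary part of $\nabla^{\bar F}$ are both zeroth-order and self-adjoint, hence disappear from $\Im\langle D_T s_t,s_t\rangle$), its principal symbol $c(\xi)$ has operator norm $|\xi|$, and the shrinking-ball energy estimate with the generous speed $2$ then gives unit propagation speed a fortiori. So the proposal is correct and coincides with the paper's intended proof; the one point worth making explicit in a write-up is that the pointwise divergence identity should be computed with the unitarization $\nabla^u=\tfrac12(\nabla^{\bar F}+\nabla^{\bar F,*})$ (or simply run the integrated version via Green's identity), since $h^{\bar F}$ need not be $\nabla^{\bar F}$-parallel.
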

	
	For $a,b\in (-2,2)(a<b)$, let 
	$M_{a,b}=[a,b]\times Y.$ It follows from \Cref{speed0} that
	
	
	\begin{lem}\label{speed}
    Let $K_T$ be the heat kernel for $\Delta_T$.
		Suppose $x\notin M_{-9/8,-7/8}\cup M_{7/8,9/8}$ and $d(x,x')\geq 4\delta$ for some $\delta>0$. Then there exist $T$-independent constants $c=c(\delta), C=C(\delta)>0$, such that for $t\in(0,1),$
		\[|K_T(t,x,x')|+|\nabla_x K_T(t,x,x')|\leq Ce^{-c/t}\]
		if $T$ is large enough. Here $\nabla_x$ means the derivative is taken in the $x$-direction.
	\end{lem}
	\begin{proof}
		The proof follows a standard argument, adapted from \cite[$\S$ 13.b]{bismut1991complex}.
		Choose a bump function $\phi$ on $\R$ such that
		$$
		\phi(\lambda)=1,|\lambda| \leqslant \delta ; \quad \phi(\lambda)=0,|\lambda| \geqslant 2 \delta .
		$$
		Let $f_1,f_2\in S(\R)$ such that
		$$
		\hat{f}_1(\lambda)=(4 \pi t)^{-\frac{1}{2}} \exp \left(-\lambda^2 / 4 t\right) \phi(\lambda), \quad \hat{f}_2(\lambda)=(4 \pi t)^{-\frac{1}{2}} \exp \left(-\lambda^2 / 4 t\right)\big(1-\phi(\lambda)\big),
		$$
		where $S(\R)$ is the Schwartz space on $\R.$
		
		Since $D_T^2=\Delta_T$, one can see that
		$$
		f_1(D_T)+f_2(D_T)=\exp \left(-t \Delta_T\right).
		$$
		Denote $K_1$ and $K_2$ to be the integral kernel of $f_1(D_T)$ and $f_2(D_T)$ respectively, then
		$$
		K_T(t, x, x')=K_{{1}}(t, x, x')+K_2(t, x, x').
		$$

		
		
		By Lemma \ref{speed0} and the construction of $\phi$, 
		$K_1(t,x,x')=0$ if $d(x,x')\geq 4\delta.$ So it suffices to establish the estimate for $K_2.$

		Let $R_t=f_2(D_T)$, then for any $L^2$-section $s$,
		$$
		\begin{aligned}
			R_t(s)(x):=\int_{\bm} K_2(t, x, x') s(x') d x' 
			&=\frac{1}{\sqrt{4 \pi t}} \int_{|\lambda| \geqslant \delta} \exp \left(-\frac{\l^2}{4t}\right)[1-\phi(\lambda)] \exp (i \lambda D_T) s(x) d \lambda .
		\end{aligned}
		$$
		
		
		Then for any  $L^2$-section $s,s'$, suppose that we have orthogonal decomposition
		$s=\sum_{l}s_l,s'=\sum_{l}s_l'$ associated with eigenforms of $D_T$, that is, $D_T s_l=\l_ls_l,D_Ts_l'=\l_ls_l'.$
		
		One can see that 	
		\begin{align}
			\begin{split}\label{ktwos}
				&\ \ \ \ \left|\left(\Delta^k_TR_ts,s'\right)_{L^2}\right|=\left|\left(\int_{\bm} \Delta^k_{T,x}K_2(t, x, x') s(x') d x',s'(x)\right)\right|\\
				&=\left|\left(\frac{1}{\sqrt{4 \pi t}} \int_{|\lambda| \geq \delta} \exp \left(-\lambda^2 / 4 t\right)[1-\phi(\lambda)] \exp (i \lambda D_T) D_T^{2 k} s(x) d \lambda,s'(x)\right)\right|\\
				&=\left|\sum_l\left(\frac{1}{\sqrt{4 \pi t}} \int_{|\lambda| \geq \delta} \exp \left(-\lambda^2 / 4 t\right)[1-\phi(\lambda)] \exp (i \lambda \l_l) \l^{2k}_l s_l(x) d \lambda,s'_l(x)\right)\right|\\
				&\leq C\sum_l \exp \left(-\delta^2 / 16 t\right) |(s_l,s_l')| \\
				&\leq C_k \exp (-\delta^2 / 16t)\|s\|_{L^2}\|s'\|_{L^2}\mbox{ (By Cauchy-Schwarz inequality)},
			\end{split}
		\end{align}
		where $\Delta_{T,x}$ means that the operator is acting on the $x$ component. 
		Here the equality in the third line follows from the fact that $s=\sum s_l$ is an orthogonal decomposition associated with eigenforms of $D_T$. For the first inequality, notice that $i\l_l\exp(i\l\l_l)d\l=d\exp(i\l\l_l)$, we integration by parts for $2k$ times, the $\l_l^{2k}$ term would eventually becomes 1. Since we still integral over $\{\l:|\l|\geq \delta\}$, replace $\exp(-\l^2/4t)$ by its upper bound $\exp(-\delta^2/8t)\exp(-\l^2/8t)$ and do the integration, the result follows.
		
		By \eqref{ktwos}, we have  \be\label{opn1}\|\Delta_T^kR_t\|\leq C_k \exp (-\delta^2 / 16t).\ee

		Let $\rho\in C_c^\infty(M)$ be a bump function, such that, $0\leq \rho\leq1$; in $\bm-M_{-\frac{17}{16},-\frac{15}{16}}- M_{\frac{15}{16},\frac{17}{16}}$, $\rho\equiv 1$; in $M_{-\frac{33}{32}, -\frac{31}{32}}\cup M_{\frac{31}{32}, \frac{33}{32}}$, $\rho\equiv 0$.

		Notice that when restricted in  $ \bm-M_{-\frac{33}{32},-\frac{31}{32}}- M_{\frac{31}{32},\frac{33}{32}}$, $\Delta_T\geq \Delta$ if $T$ is big enough (See Lemma \ref{constr}, \eqref{addeq31} and \eqref{addeq32}). Here $\Delta_T\geq \Delta$ on some open subset $U\subset \bm$ means that for all $\phi\in \Omega^*(U;\F|_{U})$ with compact support inside $U$, 
		\be\label{gar}(\Delta_T\phi,\phi )_{L^2}\geq (\Delta\phi,\phi )_{L^2} \mbox{ or equivalently } \|D_T\phi\|_{L^2}\geq \|D\phi\|_{L^2}.\ee
		
		Now 
		\begin{align}\begin{split}\label{opn3}
				&\ \ \ \ 	\int_{\bm}|D \rho R_ts|^2\dvol
				\leq\int_{\bm}|D_T \rho R_ts|^2\dvol\mbox{ (By \eqref{gar} )}\\
				&\leq C\int_{\bm}|D_T R_ts|^2+|R_ts|^2\dvol\mbox{ (By \eqref{addeq1})}\\
				&= C\int_{\bm}\lan \Delta_T R_ts, R_ts\ran+|R_ts|^2\dvol\\
				&\leq C_1'\exp(-\delta^2/16t)\|s\|_{L^2}^2\mbox{ (by \eqref{opn1})}.
		\end{split}\end{align}

	By Gårding's inequality, \eqref{opn3}, and \eqref{opn1} (with \(k=0\)), together with the definition of \(R_t\) and Sobolev's embedding theorem, we deduce that  
\[
| \rho(x) K_2(t, x, x')|+|\nabla_x \rho(x) K_2(t, x, x')| \leq C e^{-\delta^2 / 16t}.
\]	
	\end{proof}
	Let $\eta_i(i=1,2)$ be a smooth function on $(-\infty,\infty)$ satisfying
	\begin{enumerate}[(1)]
		\item $0\leq\eta_i\leq1$.
		\item $\eta_1\equiv1$ in $(-\infty,-3/2)$,$\eta_1\equiv 0$ in $(-5/4,\infty)$;
		\item $\eta_2(s)=\eta_1(-s)$.
	\end{enumerate}
	Let $\eta$ be an even function, such that $\eta|_{[0,\infty)}\equiv\eta_2.$
	We can regard $\eta_i$ as a smooth function on $M_i\subset\bm(i=1,2) $ and $\eta$ as a smooth function on $\bm.$
	
	Note that on $\bm-[-1,1]\times Y$, $\Delta_T=\Delta_1\oplus \Delta_2$; on $(-2,2)\times Y$, $\Delta_T=\Delta_T^\R+\Delta_Y$; on $(-2,-1)\times Y\cup(1,2)\times Y$, $\Delta_T=\bar\Delta_B+\Delta_Y$, by finite propagation speed, we should have
	\begin{lem}\label{heatm}
		There exist $T$-independent constants $C$ and $c>0,$ such that for $t\in(0,1)$,
		\[\qty|\Tr_s(N(1-\eta)e^{-t\Delta_T})-\Tr_s(N(1-\eta)e^{-t\Delta^\R_{T}}\otimes e^{-t\Delta_Y})|\leq Ce^{-c/t},\]
		\[\left|\Tr_s(N\eta e^{-t(\Delta_T)})-\sum_{i=1}^2\Tr_s(N\eta_ie^{-t(\Delta_i)})\right|\leq Ce^{-c/t},\]
		and
		\[\qty|\Tr_s(N\eta_ie^{-t\Delta^\R_{T}}\otimes e^{-t\Delta_Y})-\Tr_s(N\eta_ie^{-t\bar{\Delta}_{B}}\otimes e^{-t\Delta_Y})|\leq Ce^{-c/t}.\]
	\end{lem}
	\begin{proof}
		Let $f_1$ and $f_2$ be functions constructed in Lemma \ref{speed} for some $T$-independent and small $\delta>0$ .

		Let $u$ be a unit eigenform of eigenvalue $\l$ for $\Delta_T$. For any $j\in \Z_+$, it is easy to see that $|f_2(s)|\leq\frac{C_je^{-c/t}}{(|s|^2+1)^j}$ (similar to the first inequality in \eqref{ktwos}), hence
		\begin{align}\begin{split}\label{speed22}
				\qty|\Big(N(1-\eta)f_2(D_T)u,u\Big)_{L^2}|\leq \frac{C_je^{-c/t}}{(|\l|+1)^j}.
		\end{split}\end{align}
		By Lemma \ref{est1} and (\ref{speed22}), choosing large enough $j$, one has \be
		\label{speed23}|\Tr_s(N(1-\eta)f_2(D_T))|\leq Ce^{-c/t}.\ee Similarly, 
		\be\label{speed24}|\Tr_s(N(1-\eta)f_2(D_T^\R\oplus D^Y)|\leq Ce^{-c/t}.\ee
		Let  $M_1'=M_1-(-2,-1]\times Y$ and $M_2'=M_2-[1,2)\times Y$. Since $L^2(\bm)=L^2(M_{1}')\oplus L^2([-2,2]\times Y)\oplus L^2(M_2'),$ let $\{u_k\}$, $\{w_k\}$ and $\{v_k\}$ be an orthonormal basis of $L^2(M_{1}')$, $L^2([-2,2]\times Y)$ and $L^2(M_2')$ respectively. 
		Then by Lemma \ref{speed0} and the fact that the support of $1-\eta$ is away from $M_1'$ and $M_2'$, a standard argument shows that, if $\delta$ is small enough,
		\be\label{speed25}(N(1-\eta)f_1(D_T) u_k,u_k)=(N(1-\eta)f_1(D_T) v_k,v_k)=0,\ee
		\be\label{speed251}(N(1-\eta)f_1(D^\R_T\oplus D^Y) u_k,u_k)=(N(1-\eta)f_1(D^\R_T\oplus D^Y) v_k,v_k)=0,\ee
		and 
		\be\label{speed26}(N(1-\eta)f_1(D_T) w_k,w_k)=(N(1-\eta)f_1(D^\R_T\oplus D^Y) w_k,w_k).\ee
		
		Since $e^{-ts^2}=f_1(s)+f_2(s)$, by (\ref{speed23}), (\ref{speed24}), (\ref{speed25}), \eqref{speed251} and (\ref{speed26})
		\[|\Tr_s(N(1-\eta)e^{-t\Delta_T})-\Tr_s(N(1-\eta)e^{-t\Delta^\R_{T}}\otimes e^{-t\Delta_Y})|\leq Ce^{-c/t}.\]
		
		Similarly, one can derive the other two inequalities.
		
	\end{proof}
	
	Similarly,
	\begin{lem}\label{conbd}
		There exists $T$-independent $C,c>0,$ such that for $t\in(0,1),i=1,2$,
		\[|\Tr_s(N(1-\eta_i)e^{-t\Delta_i})-\Tr_s(N(1-\eta_i)e^{-t\Delta_{B,i}}\otimes e^{-t\Delta_Y})|\leq Ce^{-c/t},\]
		\[|\Tr_s(N\eta_ie^{-t\Delta_{B,i}}\otimes e^{-t\Delta_Y})-\Tr_s(N\eta_ie^{-t\bar{\Delta}_{B}}\otimes e^{-t\Delta_Y})|\leq Ce^{-c/t}.\]
	\end{lem}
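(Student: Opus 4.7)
\medskip

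The plan is to mimic the proof of Lemma \ref{heatm} almost verbatim, with $\Delta_T$ on $\bm$ replaced by $\Delta_i$ on $M_i$ and $\Delta^{\R}_T$ on $(-2,2)$ replaced by $\Delta_{B,i}$ on its interval. Fix $\delta>0$ small (to be chosen $\leq 1/8$), choose $\phi\in C_c^\infty((-2\delta,2\delta))$ with $\phi\equiv 1$ on $[-\delta,\delta]$, and define $f_1,f_2\in \mathcal{S}(\R)$ by
\[
\hat f_1(\lambda)=(4\pi t)^{-1/2}e^{-\lambda^2/4t}\phi(\lambda),\qquad \hat f_2(\lambda)=(4\pi t)^{-1/2}e^{-\lambda^2/4t}(1-\phi(\lambda)),
\]
so that $e^{-ts^2}=f_1(s)+f_2(s)$. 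With $D_i:=d^{F_i}+d^{F_i,*}$ and $D_{B,i}$ the corresponding Dirac-type operator on the interval, one gets $e^{-t\Delta_i}=f_1(D_i)+f_2(D_i)$ and similarly for the product operator on the tube, reducing each of the two inequalities to two separate estimates, one for $f_1$ and one for $f_2$.

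For the $f_2$ contribution, repeat the computation \eqref{speed22} using the pointwise bound $|f_2(s)|\leq C_j e^{-c/t}/(1+|s|^2)^j$ together with Weyl's law for $\Delta_i$ on $\Omega_{\bd}(M_i;F_i)$ and for $\Delta_{B,i}\otimes 1+1\otimes \Delta_Y$ on the tube (and for $\bar{\Delta}_B\otimes 1+1\otimes \Delta_Y$ for the second inequality). This gives $|\Tr_s(N(1-\eta_i)f_2(D_i))|+|\Tr_s(N(1-\eta_i)f_2(D_{B,i}\oplus D_Y))|\leq Ce^{-c/t}$ and analogously $|\Tr_s(N\eta_i f_2(D_{B,i}\oplus D_Y))|+|\Tr_s(N\eta_i f_2(\bar{D}_B\oplus D_Y))|\leq Ce^{-c/t}$. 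For the $f_1$ contribution I invoke finite propagation speed exactly as in the proof of Lemma \ref{heatm}. For the first inequality, split $L^2(M_i)=L^2(M_i\setminus ((-2,-1]\times Y))\oplus L^2((-2,-1]\times Y)$ (for $i=1$; analogously for $i=2$) and pick orthonormal bases; for $\delta<1/8$, the support of $1-\eta_i$ lies strictly inside the tube and at distance $>2\delta$ from its complement, so the off-diagonal matrix coefficients of $f_1(D_i)$ involving the non-tube summand vanish by finite propagation, and the diagonal matrix coefficients on the tube agree with those of $f_1(D_{B,i}\oplus D_Y)$ since, by the product-metric assumption near $Y$, the operators $D_i$ and $D_{B,i}\oplus D_Y$ coincide pointwise on the tube and the artificial boundary at $s=\mp 2$ is not reached by propagation in time $2\delta$. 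For the second inequality, the support of $\eta_i$ lies in $\{|s|\geq 5/4\}$, while $\Delta_{B,i}$ and $\bar{\Delta}_B$ differ only through the boundary condition at $s=\mp 1$ and the presence of the far interval on the $\bar{\Delta}_B$ side; taking $\delta<1/8$ again guarantees that these loci of discrepancy are more than $2\delta$ away from the support of $\eta_i$, so by finite propagation the corresponding matrix coefficients of $f_1$ match identically.

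The only step that is not completely routine is the finite propagation speed statement for $D_i$ and for the product operator in the presence of the boundary conditions at $\p M_i$ and $\{\pm 2\}\times Y$; this is precisely the boundary analogue of Lemma \ref{speed0} and follows from the self-adjointness of $D_i$ on $\A_{\bd}(M_i;F_i)$ (\S\ref{hodgere}) together with the standard energy-estimate proof of unit propagation speed, which is insensitive to whether the underlying manifold is closed or has boundary so long as the wave front has not yet reached the boundary — exactly the situation enforced by our choice $\delta<1/8$. With this in place, combining the $f_1$- and $f_2$-estimates yields both displayed inequalities of Lemma \ref{conbd}.
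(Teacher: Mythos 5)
Your proposal is correct and follows exactly the route the paper intends: the paper itself dispatches Lemma \ref{conbd} with a single ``Similarly,'' referring to the $f_1/f_2$ splitting, the Schwartz-decay bound for $f_2$ combined with Weyl's law, and finite propagation speed for $f_1$, which is precisely what you spell out (and you correctly note that finite propagation holds in the presence of boundary so long as the wave front stays away from it, and that the boundary conditions at the shared endpoint coincide so the $f_1$-kernels match there). The only quibble is numerical: with the paper's stated propagation constant ($C_t=\{d(x,x')\leq 2t\}$ in Lemma \ref{speed0}), the $f_1$-kernel is only known to vanish for $d\geq 4\delta$, and the discrepancy locus for the second inequality (the endpoint $s=\mp1$) sits at distance $1/4$ from $\supp\eta_i\cap[-2,-1]$, so you should take $\delta<1/16$ rather than $\delta<1/8$; this is a harmless constant adjustment and does not affect the argument.
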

	
	\subsubsection{Comparison of $e^{-t(\Delta_1\oplus\Delta_2)}$ and $e^{-t\Delta_T}$}
	By the finite propagation speed, or more explicitly by \Cref{heatm} and \Cref{conbd}, we establish the following lemma, which reduces the problem to one-dimensional models.
    \begin{lem}\label{comparison}
         There exists $C,c>0$, such that for $t\in(0,1]$,
     	\begin{align}\begin{split}\label{ntn12}
			&\left|\Tr_s(Ne^{-t\Delta_T})-\Tr_s(Ne^{-t(\Delta_1\oplus\Delta_2)})\right.\\
            &\left.\quad-\left(\Tr_s(N^\R e^{-t{\Delta}_T^\R})-\Tr_s\big(N^\R e^{-t(\Delta_{B,1}^\R\oplus\Delta_{B,2}^\R)}\big)\right)\chi(Y)\rank(F)\right|\leq Ce^{-c/t}
	\end{split}\end{align}
    \end{lem}
    \begin{proof}
    By \Cref{conbd}, we have for $t\in(0,1]$,
	\begin{align}\begin{split}\label{heatbd1}
			&\ \ \ \ \left|\Tr_s(Ne^{-t(\Delta_1\oplus\Delta_2)})-\sum_{i=1}^2\Tr_s(N\eta_ie^{-t\Delta_i})\right.\\
			&+\left.\sum_{i=1}^2\left(\Tr_s(N\eta_ie^{-t\bar{\Delta}_{B}}\otimes e^{-t\Delta_Y})-\Tr_s(Ne^{-t\Delta_{B,i}}\otimes e^{-t\Delta_Y})\right)\right|\\
			&\leq C\exp(-c/t).
	\end{split}\end{align}
	Next, notice that on $[-2,-1]\times Y$, the number operator can be decomposed as $N=N^{Y}+N^\R$ (Here $N^Y$ and $N^\R$ are the number operators on $Y$ and $\R$ components respectively),
	\begin{align}\begin{split}\label{trnby}
			&\ \ \ \ \Tr_s(Ne^{-t\Delta_{B,i}}\otimes e^{-t\Delta_Y})\\
			&=\Tr_s(N^\R e^{-t\Delta_{B,i}}\otimes e^{-t\Delta_Y})+\Tr_s(e^{-t\Delta_{B,i}}\otimes N^Ye^{-t\Delta_Y})\\
			&=\Tr_s(N^\R e^{-t\Delta_{B,i}^\R})\chi(Y)\rank(F)+\Tr_s(N^{Y}e^{-t\Delta_Y})\Tr_s(e^{-t\Delta_{B,i}}).
	\end{split}\end{align}

	As a result, by (\ref{tracer}), (\ref{tracer1}), \eqref{trnby}  and the fact that $\sum_{i=1}^2\Tr_s(e^{-t\Delta_{B,i}})=0$,
	\begin{align}\begin{split}\label{heatbd2}
			&\ \ \ \   \sum_{i=1}^2\Tr_s(Ne^{-t\Delta_{B,i}}\otimes e^{-t\Delta_Y})\\
			&=\Tr_s(N^\R e^{-t\Delta_{B,1}^\R})\chi(Y)\rank(F)+\Tr_s(N^\R e^{-t\Delta_{B,2}^\R})\chi(Y)\rank(F).\\
	\end{split}\end{align}

	It follows from Lemma \ref{heatm} that
	\begin{align}\begin{split}\label{heatm1}
			&\ \ \ \ \left|\Tr_s(Ne^{-t\Delta_T})-\sum_{i=1}^2\Tr_s(N\eta_ie^{-t\Delta_i})\right.\\
			&+\left.\sum_{i=1}^2\left(\Tr_s(N\eta_ie^{-t\bar{\Delta}_{B}}\otimes e^{-t\Delta_Y})-\Tr_s(Ne^{-t\Delta^\R_{T}}\otimes e^{-t\Delta_Y})\right)\right|\\
			&\leq C\exp(-c/t).
	\end{split}\end{align}
	Moreover,
	\begin{align}\begin{split}\label{trnry}
			&\ \ \ \ \Tr_s(Ne^{-t\Delta^\R_{T}}\otimes e^{-t\Delta_Y})\\
			&=\Tr_s(N^{Y}e^{-t\Delta_Y})\Tr_s(e^{-t{\Delta}_T^\R})+\Tr_s(N^\R e^{-t{\Delta}_T^\R})\Tr_s(e^{-t\Delta_Y})\\
			&=\Tr_s(N^{Y}e^{-t\Delta_Y})\Tr_s(e^{-t{\Delta}_T^\R})+\Tr_s(N^\R e^{-t{\Delta}_T^\R})\chi(Y)\rank(F).\\
	\end{split}\end{align}
	
	Note that $\Tr_s(e^{-t{\Delta}_T^\R})=\lim_{t\to\infty}\Tr_s(e^{-t{\Delta}_T^\R})=\dim(\ker({\Delta}_T^\R)_0)-\dim(\ker({\Delta}_T^\R)_1)$. Here $\ker({\Delta}_T^\R)_i$ denotes the space of harmonic $i$-forms$(i=0,1)$. Since $f_T$ is odd, one can see easily that if $u(s)\in\ker({\Delta}_T^\R)_0$, then $u(-s)ds\in\ker({\Delta}_T^\R)_1.$
	
	As a result, $\Tr_s(e^{-t{\Delta}_T^\R})=0$. So by \eqref{trnry},
	\begin{align}\begin{split}\label{heatm2}
			\Tr_s(Ne^{-t\Delta^\R_{T}}\otimes e^{-t\Delta_Y})=\Tr_s(N^\R e^{-t{\Delta}_T^\R})\chi(Y)\rank(F).
	\end{split}\end{align}
	
	Estimate \eqref{ntn12} then follows from (\ref{heatbd1}), (\ref{heatbd2}), (\ref{heatm1}), (\ref{heatm2}).
	\end{proof}
	\subsection{Proof of Theorem \ref{main} when $\dim(M)$ is even}
	First, by Theorem \ref{eigencon}, if $T$ is large, $\dim(\Im\P^\delta(T))=\dim(\ker\Delta_1)+\dim(\ker\Delta_2)$. So for $t\in(0,1)$ (note that $e^{-ct}-1=O(t)$ for $|c|\leq\delta$),
	\begin{align}\begin{split}\label{tracens}
			&\ \ \ \ \Tr_s(Ne^{-t\Delta_T})-\Tr_s(N e^{-t(\Delta_1\oplus\Delta_2)})\\
			&=\Tr_s\Big(Ne^{-t\Delta_T}(1-P^\delta(T))\Big)-\Tr_s(N e^{-t(\Delta_1\oplus\Delta_2)'})+O(t).
	\end{split}\end{align}
Here, and throughout, \( O(g(t)) \) denotes terms satisfying \( |O(g(t))| \leq C g(t) \) for some constant \( C \) independent of \( T \), where \( g(t) \) is any positive function and \( t \in (0,1) \).
    
	When $M$ is even-dimensional, $\chi(Y)=0.$
	It follows from \eqref{ntn12} and (\ref{tracens}) that
	\[\Big|\Tr_s\Big(Ne^{-t\Delta_T}(1-P^\delta)\Big)-\Tr_s(N e^{-t(\Delta_1\oplus\Delta_2)'})\Big|=O(t)\] 
	for $t\in(0,1).$
	So by \eqref{point-wise} and the dominated convergence theorem,
	\[\lim_{T\to\infty}\int_0^1t^{-1}\Big|\Tr_s\Big(Ne^{-t\Delta_T}(1-P^\delta)\Big)-\Tr_s(N e^{-t(\Delta_1\oplus\Delta_2)'})\Big|dt=0.\]
	Hence, \[(\zeta^{\mS}_{T,\la})'(0)=(\zeta_1^{\mS})'(0)+(\zeta_2^{\mS})'(0)+o(1).\]
Similarly, \[(\zeta^{\mS}_{i,T})'(0)=(\zeta_i^{\mS})'(0)+o(1).\]

	\section{One-dimensional Models and Coupling Techniques}\label{cons}
To prove Theorem \ref{main} when \( \dim(M) \) is odd, it suffices, by \eqref{ntn12}, to estimate one-dimensional heat trace
\[
\Tr_s(N^\R e^{-t{\Delta}^\R_{T}}) - \Tr_s(N^\R e^{-t(\Delta_{B,1}^\R\oplus\Delta^\R_{B,2})})
\]  
This involves the coupling techniques. More explicitly,
we estimate coupled heat trace $\Tr_s(N^\R e^{-t{\Delta}^\R_{t^{-7}T}})$ in \cref{coups}, and show that (\Cref{lem77}) \[|\Tr_s(N^\R e^{-t{\Delta}^\R_{t^{-7}T}}) - \Tr_s(N^\R e^{-t(\Delta_{B,1}^\R\oplus\Delta^\R_{B,2})})|=O(t^{0.1}),\]
which enable us to apply dominated convergence theorem.

In \Cref{partial72}, we establish \Cref{main1} and \Cref{main2}, which are equivalent to \Cref{main} if we disregard certain mysterious terms arising from the comparison between the coupled and uncoupled heat traces:  
\[
\Tr_s(N^\R e^{-t{\Delta}^\R_{T}}) - \Tr_s(N^\R e^{-t{\Delta}^\R_{t^{-7}T}}).
\]  
Finally, in \Cref{last}, we explicitly compute these terms, thereby fully establishing \Cref{main}.
 
 First, it is straightforward to see that
    \begin{prop}\label{prop71}
        	Let \be\label{vtpm} V_{T,\pm}=|f_T'|^2\pm f_T''.\ee Recall that $k_{T}$ is the heat kernel for Witten Laplacian $\Delta^\R_{T}$ on $(-2,2)$ with the absolute boundary condition on $-2$ and the relative boundary condition on $2$. Then restricted to functions, \be\label{delpm1}\Delta^\R_{T}=\Delta^\R_{T,-}:=-\partial^2_s+V_{T,-}\ee with the Neumann boundary condition on $-2$ and the Dirichlet boundary condition on $2$. Restricted on 1-forms (and identify $u(s)ds$ with $u(s)$ )
	\be\label{delpm2}\Delta^\R_{T}=\Delta^\R_{T,+}:=-\partial^2_s+V_{T,+}\ee with Dirichlet boundary condition on $-2$ and Neumann boundary condition on $2$.
    \end{prop}


	
	
	
	
	\subsection{Heat trace estimate when $t$ and $T$ are coupled}\label{coups}
	
	\def\tT{{\tilde{T}}}
	
	We introduce the following coupling:  
\[
\tilde{T} = t^{-7} T.
\]  
Our goal is to establish the key result \Cref{lem77}.  

Before proceeding with the estimates, we briefly outline the idea behind the coupling technique. Let \(\lambda_{k,T,\pm}\) denote the \(k\)-th eigenvalue of \(\Delta^\mathbb{R}_{T,\pm}\), and let \(\phi_{k,T,\pm}\) be the corresponding eigenfunction such that \(\{\phi_{k,T,\pm}\}_{k=1}^\infty\) forms a complete orthonormal basis of \(L^2([-2,2])\). Denote by \(k_{T,\pm}\) the heat kernel associated with \(\Delta^\mathbb{R}_{T,\pm}\). Throughout this section, we assume \(t \in (0,1]\).  

Expressing the heat kernel as  
\[
k_{T,\pm} = \sum_k e^{-t\lambda_{k,T,\pm}} \phi_{k,T,\pm}^* \otimes \phi_{k,T,\pm},
\]  
Lemma \ref{limit0} implies that  
\[
\int_{-1}^{1} |\phi_{k,T,\pm}|^2 \leq \frac{C\lambda_{k,T,\pm} + 1}{\sqrt{T}}.
\]  
This observation shows that, with the coupling \(t\) and \(T\) as defined above, the integral of the heat kernel over \([-1,1]\) can be bounded by a positive power of \(t\) (see Proposition \ref{heat1form}).   Similarly, by Lemma \ref{limit2}, the heat kernel \(k_{\tilde{T},\pm}\) behaves similarly to the standard heat kernel with the desired boundary conditions on \([-2,-1] \cup [1,2]\) (see Lemma \ref{lacon}).
	
	\begin{prop}\label{heat1form}
		If $t\in(0,1],$
		\[\int_{-1}^1 |k_{\tT,\pm}(t,s,s)|ds\leq \frac{Ct^2}{\sqrt{T}}\]
		for some constant $C$ that doesn't depend on $T.$
	\end{prop}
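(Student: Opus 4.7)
The plan is to use the eigenfunction expansion. Let $\{\phi_{k,\tT,\pm}\}_{k\geq 1}$ be an $L^2$-orthonormal basis of eigenfunctions of $\Delta^\R_{\tT,\pm}$ with eigenvalues $\lambda_k := \lambda_{k,\tT,\pm} \geq 0$ (non-negative since $\Delta^\R_{\tT,\pm}$ factors as $(d_{\tT}^\R)^*d_{\tT}^\R$ or $d_{\tT}^\R(d_{\tT}^\R)^*$). Then $k_{\tT,\pm}(t,s,s) = \sum_k e^{-t\lambda_k}|\phi_k(s)|^2 \geq 0$, so writing $m_k := \int_{-1}^1 |\phi_k|^2\,ds$, the quantity to bound equals $\sum_k e^{-t\lambda_k} m_k$. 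I will combine two ingredients: (a) the concentration estimate $m_k \leq C(1+\lambda_k)/\sqrt{\tT}$, and (b) the heat-trace-type bound $\sum_k (1+\lambda_k)e^{-t\lambda_k} \leq C/t^{3/2}$ for $t \in (0,1]$. Together with $\tT = t^{-7}T$ they give
\[
\sum_k e^{-t\lambda_k} m_k \leq \frac{C}{\sqrt{\tT}}\sum_k (1+\lambda_k)e^{-t\lambda_k} \leq \frac{C}{\sqrt{\tT}\,t^{3/2}} = \frac{C t^{7/2}}{\sqrt{T}\,t^{3/2}} = \frac{Ct^2}{\sqrt{T}} \leq \frac{Ct}{\sqrt{T}}.
\]

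For (a) I split $[-1,1]$ into boundary layers of width $O(1/\sqrt{\tT})$ around each of $\pm 1$ and the interior. The potential $V_{\tT,\pm} = |f'_{\tT}|^2 \pm f''_{\tT}$ vanishes on the flat strips $[-2,-1]$ and $[1,2]$, is positive of size $\tT^2$ deep inside $[-1,1]$, stays positive of size $\gtrsim \tT$ elsewhere on $[-1,1]$ except on a narrow ``sliver'' of width $O(1/\sqrt{\tT})$ adjacent to $s=-1$ (for the $-$ case) or $s=1$ (for the $+$ case), where it is negative of size $\lesssim\tT$. On the flat strips $\phi_k$ is trigonometric, so the mixed Neumann/Dirichlet boundary conditions and the $L^2$-normalization force $|\phi_k(\pm 1)| + |\phi_k'(\pm 1)|/\sqrt{1+\lambda_k} \leq C$; applying Gronwall to $\phi_k'' = (V_{\tT,\pm} - \lambda_k)\phi_k$ across a layer of width $O(1/\sqrt{\tT})$, on which $|V_{\tT,\pm}| \leq C\tT$, yields $\|\phi_k\|_{L^\infty}^2 \leq C(1+\lambda_k)$ there, so each boundary layer contributes at most $C(1+\lambda_k)/\sqrt{\tT}$ to $m_k$. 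For the interior I pick a cutoff $\chi$ supported in $[-1,1]$ that avoids the negative sliver, with $\chi\equiv 1$ on the bulk and $|\chi'| \leq C\sqrt{\tT}$. Pairing the eigenvalue equation with $\chi^2\phi_k$ and integrating by parts yields
\[
\int V_{\tT,\pm}\chi^2|\phi_k|^2\,ds \leq \lambda_k + 2\int(\chi')^2|\phi_k|^2\,ds \leq C\sqrt{\tT}(1+\lambda_k),
\]
using the layer $L^\infty$-bound on $\supp(\chi')$. Since $V_{\tT,\pm} \geq c\tT$ on $\supp(\chi)$, dividing by $c\tT$ completes the interior bound, and hence (a).

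For (b) I invoke the lemma just proved above ($\lambda_k \geq v_k$ for $k \geq 2$, with $v_k$ drawn from $\{\tT(c_1\ell-c_2)_+\}_\ell \cup \{c_3\ell^2\}_\ell$): the $c_3\ell^2$-piece gives $\sum_\ell e^{-tc_3\ell^2} \leq C/\sqrt{t}$ by Gaussian summation, and together with the elementary inequality $\lambda e^{-t\lambda/2} \leq C/t$ this yields both $\sum_k e^{-t\lambda_k} \leq C/\sqrt{t}$ and $\sum_k \lambda_k e^{-t\lambda_k} \leq C/t^{3/2}$, uniformly in $\tT$ for $t \in (0,1]$.

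The hardest step is the concentration estimate (a): the cutoff scale $|\chi'| \sim \sqrt{\tT}$ is precisely tuned to the boundary-layer width, and the Gronwall propagation must cross the negative-potential sliver with constants independent of both $\tT$ and $t$. For abnormally large $\lambda_k \gg \tT$ the Gronwall growth factor $e^{\sqrt{\lambda_k/\tT}}$ degenerates; in that regime one falls back on the trivial $m_k \leq 1$ combined with the strong exponential suppression $e^{-t\lambda_k}\leq e^{-ct\tT} = e^{-ct^{-6}T}$, which is more than enough to recover the required estimate.
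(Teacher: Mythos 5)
Your proposal is correct and follows essentially the same route as the paper: expand the diagonal heat kernel in eigenfunctions, combine the concentration estimate $\int_{-1}^1|\phi_{k,\tilde{T},\pm}|^2\,ds\leq C(1+\lambda_k)/\sqrt{\tilde{T}}$ with a $T$-uniform trace bound coming from the one-dimensional eigenvalue lower bound and domain monotonicity, and let the coupling $\tilde{T}=t^{-7}T$ absorb the powers of $t$. The only difference is that the paper takes the concentration estimate directly from Lemma \ref{limit0} (which propagates a first-order differential inequality in $s$ from the flat ends and then uses $f_T''+|f_T'|^2\geq T$ on the interior), whereas you re-derive it with boundary layers, Gronwall, and a cutoff; that variant works but obliges you to patch the $\lambda_k\gg\tilde{T}$ regime separately, which the paper's argument sidesteps.
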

	\begin{proof}
		By Lemma \ref{limit0},
		\begin{equation}\label{Tl}\int_{-1}^1 |\phi_{k,\tT,\pm}|^2(s)ds\leq \frac{C(1+\l_{k,\tT,\pm})}{\sqrt{\tT}}.\end{equation}
		For a fixed $a\in(0,1)$, we have
		\begin{align*}
			\int_{-1}^1 k_{\tT,\pm}(t,s,s)ds&=\int_{-1}^1 \sum_{k}e^{-t\lambda_{k,\tT,+}}|\phi_{k,\tT,+}|^2ds\\
			&\leq C\sum_{k\geq1}e^{-t\lambda_{k,\tT,+}}\frac{1+\l_{k,\tT,+}}{\sqrt{\tT}}\mbox{ (By \eqref{Tl})}\\&\leq C\sum_{k\geq1}e^{-at\lambda_{k,\tT,+}}\frac{1}{t\sqrt{\tT}}\leq \frac{Ct^2}{\sqrt{T}}.
		\end{align*}
	The first inequality in the last line follows from the fact that for some $C>0$, \( |e^{-xt} xt| \leq C\) for all \( x > 0 \) and \( t > 0 \). The second inequality in the last line follows from the one dimensional version of Lemma \ref{est1} that $\l_{k,\tT,+}\geq c (k-C)_+^{2}$ for some $\tT$-independent positive constants $c$ and $C.$
	\end{proof}
	
	\def\bk{\bar{k}}
Recall that \(\bar{\Delta}_{B}\) denotes the standard Hodge Laplacian on \([-2,2]\) with absolute boundary conditions at \(-2\) and relative boundary conditions at \(2\). Let \(\bar{k}_{B}\) be its heat kernel. Denoting by \(\bar{k}_{B,+}\) the restriction of \(\bar{k}_B\) to \(1\)-forms (where we identify \(u(s) ds\) with \(u(s)\)), we impose Dirichlet boundary conditions at \(-2\) and Neumann boundary conditions at \(2\).
	
	To orient the reader to the details in the proof of Lemma \ref{addlem} and the first part of the proof in Lemma \ref{kx0}, we provide a concise explanation here. Naively, it's apparent that on $(-2, 0)$, $\Delta_{T,+}^\R\geq\bar{\Delta}_{B,+}$ except on a very small interval $(-1,-1+e^{-T^2})$ (when $T$ is large). Intuitively, this suggests that the upper bound of $|k_{T,+}|(t,s,s),t\in(0,1),s\in(-2,-1)$ should be controlled by the upper bound of the standard heat kernel.
	
	\begin{lem}\label{addlem}
		$0\leq k_{T,+}\leq \frac{e^{CT}}{\sqrt{t}}$ for $t\in(0,1].$
	\end{lem}

	\begin{proof}
		    The lemma is  the maximal principle. 
		
		By \Cref{constr}, there exists $T$-independent $C_0$, such that $V_{T,\pm}>-C_0T$. Then
		\be\label{ppsekt}(\p_t-\p_s^2)e^{-C_0Tt}k_{T,+}(t,s,s')=e_{T}(s)e^{-C_0Tt}k_{T,+}(t,s,s')\ee
		where $e_T=-V_{T,+}-C_0T<0.$
		
		By contradiction, assume that $e^{-CTt}k_{T,+}(t_0,s_0,s')<0$ for some $(t_0,s_0)$. Then by boundary conditions, we must have $t_0>0$ and $s_0\in(-2,2].$
		Now suppose $e^{-C_0Tt}k_{T,+}(t,s,s')$ achieves its global minimal (which is negative by assumption) at $(t_*,s_*)\in(0,1]\times[-2,2]$.
		
		Then $s_*\in(-2,2]$. If $s_*\in(-2,2)$, then $\p_se^{-C_0Tt_*}k_{T,+}(t_*,s,s')|_{s=s_*}=0$. If $s_*=2$, by boundary condition, we also have $\p_se^{-C_0Tt_*}k_{T,+}(t_*,s,s')|_{s=s_*}=0$. As a result, the second derivative test implies that $\p_s^2e^{-C_0Tt_*}k(t_*,s,s)|_{s=s_*}\geq0.$
		
		However, \eqref{ppsekt} then implies that 
		\be\label{ptpo}	\p_te^{-C_0Tt}k_{T,+}(t,s_*,s')|_{t=t_*}=\p_s^2e^{-C_0Tt_*}k_{T,+}(t_*,s,s')|_{s=s_*}+e_{T}(s_*)e^{-C_0Tt_*}k_{T,+}(t_*,s_*,s')>0\ee
		For the last inequality, notice that $e_T(s_*)<0$ and $e^{-C_0Tt_*}k_{T,+}(t_*,s_*,s)<0$ by assumption. 
		However, \eqref{ptpo} contradicts with the fact that $e^{-C_0Tt}k_{T,+}(t,s,s')$ achieves its global minimal at $(t_*,s_*).$
		
		As a result, $e^{-C_0Tt}k_{T,+}(t,s,s')\geq0$, which implies $k_{T,+}(t,s,s')\geq0.$
		
		Now 
		\be\label{ppsekt1}(\p_t-\p_s^2)(e^{-C_0Tt}k_{T,+}(t,s,s')-\bk_{B,+}(t,s,s'))=e_{T}(s)e^{-C_0Tt}k_{T,+}(t,s,s')\leq 0\ee
		
		Proceeding as what we just did, we have
		\be\label{theineqabove} e^{-C_0Tt}k_{T,+}(t,s,s')-\bk_{B,+}(t,s,s')\leq 0.\ee
		The Lemma then follows from the fact that $\bk_{B,+}\leq\frac{C}{\sqrt{t}}$.
	\end{proof}
	
	\begin{lem}\label{kx0}
		For $t\in(0,1), s\in(-2,-1)$, and $T\geq1$, 
		\[ 0\leq k_{T,+}(t,s,-1)\leq\min\{\frac{C}{t^{3/2}T^{1/4}},\frac{C}{\sqrt{t}}\}\] and for  $t\in(0,1), s\in(1,2)$, and $T\geq 1,$
		\[0\leq k_{T,-}(t,s,1)\leq \min\{\frac{C}{t^{3/2}T^{1/4}},\frac{C}{\sqrt{t}}\}.\]
	\end{lem}
	\begin{proof} 
		$\bullet$ We first establish that \( k_{T,+}(t,s,-1) \leq \frac{C}{\sqrt{t}} \):\\

		Let $\rho\in C_c^\infty(-\infty,\infty)$ be a nonnegative function, such that $\rho\equiv1$ on $(-\infty,-1/8)$, $\rho\equiv 0$ on $(-1/16,\infty)$.
		By \eqref{vtpm}, \eqref{delpm1}, \eqref{delpm2} and a straightforward computation,
		\begin{align*}&\ \ \ \ (\p_t+\Delta^\R_{T,+,s'})(\rho(s') \bk_{B,+}(t,s,s')-k_{T,+}(t,s,s'))\\
			&=-2\rho'(s')\partial_{s'}\bk_{B,+}(t,s,s')-\rho''(s')\bk_{B,+}(t,s,s')+\rho(s')V_{T,+}(s')\bk_{B,+}(t,s,s').\end{align*}
		Here $\Delta^\R_{T,+,s'}$ means that the operator is acting on the $s'$ component.

		Set $$h(t,s,s')=-2\rho'(s')\partial_{s'}\bk_{B,+}(t,s,s')-\rho''(s')\bk_{B,+}(t,s,s')+\rho(s')V_{T,+}(s')\bk_{B,+}(t,s,s'),$$
		$$l(t,s,s')=-2\rho'(s')\partial_{s'}\bk_{B,+}(t,s,s')-\rho''(s')\bk_{B,+}(t,s,s').$$
		
		For $s\in(-2,-1)$, we compute
		\begin{align}\begin{split}\label{newed1}\ \ \ \ &0\leq k_{T,+}(t,s,-1)=\bk_{B,+}(t,s,-1)-\int_0^t\int_{-2}^{2}k_{T,+}(t-t',-1,s')h(t',s,s')ds'dt'\\&\leq\bk_{B,+}(t,s,-1)+C\int_0^t\int_{-1}^{-1+e^{-T^2}}\frac{T^2e^{CT}}{\sqrt{t-t'}\sqrt{t'}}ds'dt'\\&-\int_0^t\int_{-2}^{2}k_{T,+}(t-t',-1,s')l(t',s,s')ds'dt'.
		\end{split}	\end{align}
		Here the equality in the first line follows from the Duhamel's principle. The inequality in the second line follows from the fact that $\rho V_{T,+}(s)\geq 0$ if $s\notin [-1,-1+e^{-T^2}].$ Therefore outside $[-1,-1+e^{-T^2}]$, $h$ is replaced by $l$, while inside $[-1,-1+e^{-T^2}]$, the extra term $\rho(s')V_{T,+}\bar{k}_{B,+}(t',s,s')k_{T,+}(t-t',-1,s')$ is bounded above by $\frac{CT^2e^{CT}}{\sqrt{t-t'}\sqrt{t'}}$ by Lemma \ref{addlem}.
		
		It is obvious that if $T$ is large,\be\label{newed2}
		\int_0^t\int_{-1}^{-1+e^{-T^2}}\frac{T^2e^{CT}}{\sqrt{t-t'}\sqrt{t'}}ds'dt'\leq C.
		\ee
		Also, since supports of $\rho'$ and $\rho''$ are away from $(-2,-1)$, by Lemma \ref{speed}
		\be\label{newed3}
		-\int_0^t\int_{-2}^{2}k_{T,+}(t-t',-1,s')l(t',s,s')ds'dt'\leq C\int_0^t\int_{-1/16}^{-1/8}e^{-c/(t-t')}e^{-c/t'}dsdt' \leq C'.\ee
		By \eqref{newed1}, \eqref{newed2}, \eqref{newed3} and the classical heat kernel estimate for $\bar{k}_{B,+}$, $k_{T,+}(t,s,-1)\leq \frac{C}{\sqrt{t}}.$
		\ \\
		$\bullet$ Next we show that $|k_{T,+}|(t,s,-1)\leq \frac{C}{t^{3/2}T^{1/4}}$: \\
		Let $\phi_{k,T,+}$ be a unit eigenfunction for $\l_{k,T,+}$, then $\phi_{k,T,+}$ satisfies
		\begin{equation*}
			\begin{cases}
				-\phi''_{k,T,+}=\l_{k,T,+}\phi_{k,T,+}\mbox{, in $(-2,-1)$}\\
				\phi_{k,T,+}(-2)=0.
			\end{cases}
		\end{equation*}
		Hence $\phi_{k,T,+}(s)=c_k\sin(\l_{k,T,+}(s+2))$ in $(-2,-1)$ for some $c_k$.
		$$\frac{2c_k^2\l_{k,T,+}+\sin(2\l_{k,T,+})}{4\l_{k,T,+}}=\int_{-2}^{-1}|\phi_{k,T,+}|^2\leq\int_{-2}^{-2}|\phi_{k,T,+}|^2= 1$$ implies that $c_k\leq C$ for some constant $C$ that doesn't depend on $T$. As a result, $|\phi_{k,T,+}|(s)\leq C$ if $s\in(-2,-1).$
		
		Moreover, Lemma \ref{limit2} implies that $|\phi_{k,T,+}|^2(-1)\leq \frac{C(\l_{k,T,+}+1)^2}{\sqrt{T}}$. Hence, by a similar estimates in Lemma \ref{est1}, for a fixed $a\in(0,1),$ similar to the proof of Proposition \ref{heat1form},	
		\begin{align*}
			|k_{T,+}(t,s,-1)|&= |\sum_{k}e^{-t\lambda_{k,T,+}}\phi_{k,T,+}(s)\phi_{k,T,+}(-1)|\\
			&\leq C\sum_{k}e^{-t\lambda_{k,T,+}}\frac{{\l_{k,T,+}+1}}{T^{1/4}}\leq C\sum_{k}e^{-at\lambda_{k,T,+}}\frac{1}{{t}T^{1/4}}\\
			&\leq \frac{C}{t^{3/2}T^{1/4}}.
		\end{align*}
		The second inequality could be proved similarly. 
	\end{proof}
	
	
	Recall that $\Delta_{B,1}^\R$ is the usual Hodge Laplacian on $[-2,-1]$ with absolute boundary conditions, and let $k_{B,1}$ be the heat kernel of $\Delta_{B,1}^\R$. Let $k_{B,1,+}$ be the restriction of $k_{B,1}$ on 1-forms (and identify $u(s)ds$ with $u(s)$). 
	Then $k_{B,1,+}$ satisfies the Dirichlet boundary on $\{-1,-2\}$. While $\Delta_{B,2}^\R$ is the usual Hodge Laplacian on $[1,2]$ with relative boundary conditions, $k_{B,2}$ be the heat kernel of $\Delta_{B,2}^\R$. Let $k_{B,2,-}$ be the restriction of $k_{B,2}$ on functions.

	\begin{lem}\label{lacon}
		For $t \in (0, 1]$ and T sufficiently large,
		\[\int_{-2}^{-1}|k_{\tT,+}-k_{B,1,+}|(t,s,s)ds\leq \frac{Ct^{0.1}}{T^{0.01}}; \]
		\[\int_{1}^{2}|k_{\tT,-}-k_{B,2,-}|(t,s,s)ds\leq \frac{Ct^{0.1}}{T^{0.01}}.\]
	\end{lem}
	\begin{proof}
		
		First, one can see that for classical heat kernels, $$|\partial_{s'}k_{B,1,+}(t,s,s')|_{s'=-1}|\leq \frac{C(s+1)e^{-(s+1)^2/t}}{t^{3/2}}$$ for $s\in[-2,-1].$	Notice that, $k_{\tT,+}(t',s,s')$ and $k_{B,1,+}(t',s,s')$ satisfy the same equation for $t'\in(0,t),s\in[-2,-1]$. That is, $$ (\p_{t'}-\p_{s}^2)k_{\tT}(t',s,s')=(\p_{t'}-\p_{s}^2)k_{B,1,+}(t',s,s')=0, t'\in(0,t),s\in(-2,-1).$$ Also, both $k_{\tT,+}$ and $k_{B,1,+}$ satisfy the Dirichlet boundary condition at $-2$. It follows from Duhamel principle and the inequality above that for $s\in (-2,-1)$,
		\begin{align*}&\ \ \ \ |k_{\tT,+}-k_{B,1,+}|(t,s,s)\leq\int_0^t\Big|\partial_{s'}k_{B,1,+}(t-t',s,s')|_{s'=-1}k_{\tT,+}(t',s,-1)\Big|dt'\\
			&\leq C'\int_0^t \frac{(s+1)e^{-(s+1)^2/(t'-t)}}{(t-t')^{3/2}}\left|k_{\tT,+}(t',-1,s)\right|dt'=J_1(t,s).\end{align*}
		Hence,
		\begin{align*}
			&\ \ \ \ \int_{-2}^{-1}J_1(t,s)ds\leq C\int_{-2}^{-1}\int_0^t \frac{(s+1)e^{-(s+1)^2/(t'-t)}}{(t-t')^{3/2}}|k_{\tT,+}(t',s,-1)|dt'ds\\
			&\leq C\int_{0}^t\int_{-2}^{-1} \frac{(s+1)e^{-(s+1)^2/(t'-t)}}{(t-t')^{3/2}}|k_{\tT,+}(t',s,-1)|dsdt'\mbox{ (By Fubini's Theorem)}\\
			&\leq C'\int_{0}^t \frac{\sup_{s\in(-2,-1)}|k_{\tT,+}(t',s,-1)|}{\sqrt{t-t'}}dt'\mbox{ (Integrate over $ds$)}\\
			&= C'\left(\int_{0}^{t^{1.2}/T^{0.02}}\frac{\sup_{s\in(-2,-1)}|k_{\tT,+}(t',s,-1)|}{\sqrt{t-t'}}dt'+\int_{t^{1.2}/T^{0.02}}^t\frac{\sup_{s\in(-2,-1)}|k_{\tT,+}(t',s,-1)|}{\sqrt{t-t'}}dt'\right)\\
			&=I_1+I_2.
		\end{align*}
		While by Lemma \ref{kx0},
		\begin{align*}
			&\ \ \ \ I_1\leq C\int_0^{t^{1.2}/T^{0.02}}\frac{1}{\sqrt{t-t'}\sqrt{t'}}dt'\leq \frac{C'}{\sqrt{t}}\int_0^{t^{1.2}/T^{0.02}}\frac{1}{\sqrt{t'}}dt'=\frac{C''t^{0.1}}{T^{0.01}}.
		\end{align*}
		Here the second inequality follows from the fact that $\frac{1}{\sqrt{t-t'}}\leq \frac{c}{\sqrt{t}}$ if $t'\in[0,\frac{t^{1.2}}{T^{0.02}}]$ for some $T$-independent $c$ (Note that $T$ is large).
		
		By Lemma \ref{kx0} again,
		\begin{align*}
			&\ \ \ \ I_2\leq 
			C\int_{t^{1.2}/T^{0.02}}^t\frac{1}{\sqrt{t-t'}}\frac{1}{{t'}^{3/2}\tT^{1/4}}dt'\\
			&\leq \frac{C'}{{t^{0.05}T^{0.22}}}\int_{t^{1.2}/T^{0.02}}^t\frac{1}{\sqrt{t-t'}}dt' \leq\frac{C''t^{0.45}}{T^{0.22}}.
		\end{align*}
		Here the first inequality in the second line follows from the fact that $t'\geq t^{1.2}/T^{0.02}$ if $t'\in[t^{1.2}/T^{0.02},t]$ and the definition of $\tT$. Also, for example, the $t^{0.05}$ term comes from $7/4-1.2\times 3/2=-0.05.$
		
		Similarly, $\int_{1}^{2}|k_{\tT,-}-k_{B,1,-}|(t,s,s)ds\leq \frac{Ct^{0.1}}{T^{0.01}}.$
	\end{proof}
	\begin{lem}\label{strkt}
		\[\left|\int_{1}^2\tr_s(k_{\tT}(t,s,s))ds+1\right|\leq Ce^{-ct}+\frac{Ct^2}{\sqrt{T}}\]
		for some constants $C$ and $c$ that don't depend on $T.$
	\end{lem}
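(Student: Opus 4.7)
The plan is to split the supertrace into $0$-form and $1$-form contributions, handle the $0$-form piece directly by \cref{lacon}, and reduce the $1$-form piece to a $0$-form comparison on the opposite tube via the $s\mapsto -s$ symmetry, which is then closed by a Duhamel argument parallel to the one in \cref{lacon}. Concretely, since $\Tr_s(e^{-t\Delta_{B,2}^{\R}})=-1$ by \eqref{tracer1},
\[
\int_1^2 \tr_s(k_{\tT}(t,s,s))\,ds + 1 = \int_1^2 (k_{\tT,-}-k_{B,2,-})(t,s,s)\,ds - \int_1^2 (k_{\tT,+}-k_{B,2,+})(t,s,s)\,ds,
\]
and the first integral is directly controlled by \cref{lacon}.

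For the $1$-form integral I would exploit the oddness of $f_T$: it gives $V_{T,\pm}(-s)=V_{T,\mp}(s)$, and the substitution $s\mapsto -s$ swaps the mixed absolute/relative boundary conditions at the two endpoints of $[-2,2]$, so it conjugates $\Delta^{\R}_{T,-}$ and $\Delta^{\R}_{T,+}$ as scalar Schr\"odinger operators, whence $k_{\tT,+}(t,s,s')=k_{\tT,-}(t,-s,-s')$. The same substitution identifies the Neumann--Neumann $1$-form kernel $k_{B,2,+}$ on $[1,2]$ with the Neumann--Neumann $0$-form kernel $k_{B,1,-}$ on $[-2,-1]$, so
\[
\int_1^2 (k_{\tT,+}-k_{B,2,+})(t,s,s)\,ds = \int_{-2}^{-1}(k_{\tT,-}-k_{B,1,-})(t,u,u)\,du,
\]
and it suffices to bound this latter integral by $Ce^{-c/t}+Ct/\sqrt T$.

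For this Neumann-reference comparison I would mirror the Duhamel argument of \cref{lacon}: on $[-2,-1]$ the potential $V_{T,-}\equiv 0$, both kernels solve $(\partial_t-\partial_s^2)k=0$, both satisfy the Neumann condition at $s=-2$, and $\partial_{s''}k_{B,1,-}$ vanishes at $s''=-1$, so Green's identity yields
\[
(k_{\tT,-}-k_{B,1,-})(t,s,s') = \int_0^t k_{B,1,-}(t-t',s,-1)\,\partial_{s''}k_{\tT,-}(t',s'',s')\big|_{s''=-1}\,dt'.
\]
Setting $s=s'$, integrating in $s\in[-2,-1]$, and splitting the $t'$-integral into $(0,t^{1.2}/T^{0.02})$ and $(t^{1.2}/T^{0.02},t)$ as in \cref{lacon}, the task reduces to estimating the inner boundary flux $\partial_{s''}k_{\tT,-}(t',-1,\cdot)$. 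I would obtain this by expanding $k_{\tT,-}$ in the eigenbasis $\{\phi_{k,\tT,-}\}$, bounding $|\phi'_{k,\tT,-}(-1)|$ through a boundary energy estimate in the spirit of \cref{limit2}, using that the Morse minimum of $f_T$ at $s=-1$ forces the Witten ground state to match a Gaussian with nearly vanishing derivative from the interior, and handling the high-mode tail with \cref{est1}.

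The principal obstacle is precisely this boundary-derivative estimate. In \cref{lacon} the Dirichlet reference $k_{B,1,+}$ vanishes at the inner boundary, so Duhamel places the \emph{value} of $k_{\tT,+}$ at $s=-1$ under the time integral and \cref{kx0} directly supplies the required $T$-decay. Here the Neumann reference forces a \emph{derivative} to appear, and the $0$-form ground state carries $O(1)$ mass on $[-2,-1]$, so mode-by-mode bounds produce no $T$-smallness on their own; the required rate arises only through the cancellation between the bulk constant mode of $k_{B,1,-}$ and the Morse ground state of $k_{\tT,-}$, which is what makes this step the delicate part of the proof.
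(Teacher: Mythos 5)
Your reduction via the $s\mapsto-s$ symmetry is correct and pleasant: since $f_T$ is odd, $V_{T,\pm}(-s)=V_{T,\mp}(s)$, the mixed absolute/relative boundary conditions on $[-2,2]$ are swapped, and indeed $k_{\tT,+}(t,s,s')=k_{\tT,-}(t,-s,-s')$ as scalar kernels; likewise $k_{B,2,+}$ on $[1,2]$ pulls back to $k_{B,1,-}$ on $[-2,-1]$. Your split of $\tr_s$ into the $0$-form piece (handled by Proposition~\ref{lacon}) and the reflected $1$-form piece is exactly right, and you correctly isolate the remaining task: bounding $\int_{-2}^{-1}(k_{\tT,-}-k_{B,1,-})(t,u,u)\,du$.

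However, the step you flag as ``the delicate part'' is in fact a genuine gap, and it is not closable with the tools the paper develops. Because the reference kernel $k_{B,1,-}$ satisfies a Neumann condition at $s=-1$, Duhamel puts the \emph{boundary flux} $\partial_{\sigma}k_{\tT,-}(\tau,-1,\cdot)$ under the time integral, not the boundary \emph{value}. The whole mechanism of Lemma~\ref{kx0} depends on the Dirichlet vanishing of $k_{B,1,+}$ at $-1$: that places $k_{\tT,+}(\tau,-1,\cdot)$ in the Duhamel integral, and the boundary-value bound is exactly what Lemma~\ref{limit2} supplies, since $|\phi_{k,\tT,+}(-1)|^2\leq C(\l+1)^2/\sqrt{\tT}$. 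The paper has no analogue of Lemma~\ref{limit2} for $\phi'_{k,\tT,-}(-1)$. Indeed, on $[-2,-1]$ each $\phi_{k,\tT,-}$ is $A_k\cos(\sqrt{\l_k}(s+2))$ with $|A_k|$ bounded but of order one, so $\phi'_{k,\tT,-}(-1)=-A_k\sqrt{\l_k}\sin(\sqrt{\l_k})$, and smallness relies on $\sqrt{\l_{k,\tT,-}}$ being quantitatively close to a multiple of $\pi$. Theorem~\ref{eigencon} gives only convergence of eigenvalues (a one-sided $O(1/\sqrt{T})$ bound in the upper direction, no rate stated in the lower direction, and no $k$-uniformity over the spectrum), so this is not available, and the ``boundary energy estimate in the spirit of Lemma~\ref{limit2}'' you gesture at would have to be proved from scratch.

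The paper takes a completely different route that avoids the boundary-flux issue altogether. It extends $f_T$ to a function $p_T$ on the half-line $(-\infty,2]$ (concave quadratic to the left of $s=1$, constant $T/2$ on $[1,2]$), forms the corresponding Witten Laplacian $\bar\Delta_T$ with relative boundary condition at $s=2$, and observes that when $T$ is large $\ker(\bar\Delta_T)$ is one-dimensional and sits in degree one; McKean--Singer then gives $\Tr_s(e^{-t\bar\Delta_T})=-1$ exactly. The localization estimate of Proposition~\ref{heat1form} (which transfers verbatim to $\bar\Delta_{\tT}$) shows $\int_{-\infty}^1\bar k_{\tT}(t,s,s)\,ds\leq Ct/\sqrt{T}$, so the trace is concentrated on $[1,2]$ up to $O(t/\sqrt{T})$. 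Finally, because $p_T=f_T$ on $[1/2,2]$, finite propagation speed and Duhamel (Lemma~\ref{speed}) give $|\bar k_T(t,s,s)-k_T(t,s,s)|\leq Ce^{-c/t}$ for $s\in[1/2,2]$, which lets one replace $\bar k_{\tT}$ by $k_{\tT}$ on $[1,2]$. This proves the lemma with no boundary derivative estimate at all; the ``$-1$'' is produced cohomologically rather than by a pointwise kernel comparison. If you want to pursue your route, you would first have to establish a uniform bound of the form $|\partial_\sigma k_{\tT,-}(\tau,-1,\cdot)|\lesssim\tau^{-a}T^{-b}$, which is additional work beyond what is proved in the paper.
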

	\begin{proof}
		Let $p_T$ be a family of smooth functions on $(-\infty,2]$, such that 
		\begin{enumerate}[(1)]
			\item $p_T|_{[1,2]}\equiv T/2$,
			\item $p_T|_{(-\infty,1]}(s)=-T\rho(e^{T^2}(1-s))(s-1)^2/2+T/2$ , where $\rho\in C_c^\infty([0,\infty))$, such that $0\leq\rho\leq1$, $\rho|_{[0,1/4]}\equiv0,$ $\rho|_{[1/2,\infty)}\equiv1,$ $|\rho'|\leq \cn_1$ and $|\rho''|\leq \cn_2$ for some universal constant $\cn_1$ and $\cn_2.$
		\end{enumerate}
		\def\tDb{\bar{\Delta}}
		\def\tkb{\bar{k}}
		\def\tlb{\bar{\l}}
		Let $\tDb_{T}$ be the Witten Laplacian associated with $p_T$ with relative boundary condition on $2$, $\tkb_T$ be its heat kernel. Note that $\ker(\tDb_T)$ contains the 1-form $e^{p_T}ds$.
		Let $\tlb_k(T)$ be the $k$-th eigenvalue of $\tDb_T$, and $\l_{B,2,k}$ be the $k$-th eigenvalue of $\Delta_{B,2}$. Then repeating what we did before, we still have $\tlb_k(T)\to \l_{B,2,k}.$
		As a result, $\ker(\Delta_T)$ is one-dimensional and generated by the one-form $e^{p_T}ds$ if $T$ is large. Thus, \be\label{traceone}\Tr_s(e^{-t\tDb_T})=\int_{-\infty}^2\tr_s(\tkb_{T}(t,s,s))ds=-1.
		\ee 
		
		Moreover, one still has Proposition \ref{heat1form} for $\tDb_T$. That is, $\int_{-\infty}^1\tkb_{\tT}(t,s,s)ds\leq \frac{Ct^2}{\sqrt{T}}.$   
		As a result, by (\ref{traceone})
		\[\left|\int_{1}^2\tr_s(\tkb_{\tT})(t,s,s)ds+1\right|\leq \frac{Ct^2}{\sqrt{T}}.\]
		
		Moreover, since $p_T=f_T$ on $[1/2,2]$, it follows from Lemma \ref{speed} and Duhamel's principle (Proceeding as in Lemma \ref{kx0}) that for $s\in[1,2],$
		$|\tkb_{T}(t,s,s)-k_T(t,s,s)|\leq Ce^{-c/t}$. Thus, the lemma follows.
		
	\end{proof}
 Building on the above estimates, we now establish the key result of this subsection.
     \begin{lem}\label{lem77} For $t \in (0, 1]$ and $T$ being sufficiently large,
        \[\Tr_s(N^\R e^{-t\Delta^\R_{t^{-7}T}})=\Tr_s(N^\R e^{-t\Delta_{B,1}^\R})+\Tr_s(N^\R e^{-t\Delta_{B,2}^\R})+O(t^{0.1})\]
        Here, recall that \( O(g(t)) \) denotes terms satisfying \( |O(g(t))| \leq C g(t) \) for some constant \( C \) independent of \( T \), where \( g(t) \) is any positive function and \( t \in (0,1) \).
    \end{lem}
    \begin{proof} The lemma follows from the two equalities below easily.   Recall that $\tT=t^{-7}T.$ 
        \begin{align*}
		&\ \ \ \ \Tr_s(N^\R e^{-t\Delta^\R_{\tT}})=-\int_{-2}^2\tr_s(N^{\R}k_{\tT})(t,s,s)ds\\
		&=-\int_{-2}^{-1}\tr_s(N^{\R}k_{\tT})(t,s,s)ds-\int_{1}^2\tr_s(N^{\R}k_{\tT})(t,s,s)ds+O({t^2})\mbox{ (By Proposition \ref{heat1form})}\\
		&=-\int_{-2}^{-1}k_{\tT,+}(t,s,s)ds-\int_{1}^{2}k_{\tT,+}(t,s,s)ds+O({t^2})\quad\text{(By \Cref{prop71})}\\
        &=\Tr_s(N^\R e^{-t\Delta_{B,1}^\R})-\int_{1}^{2}k_{\tT,+}(t,s,s)ds+O({t^{0.1}})\quad\text{(By Lemma \ref{lacon})}.
        \end{align*}
    \begin{align*}
      &\ \ \ \   -\int_{1}^{2}k_{\tT,+}(t,s,s)ds\\
      &=\int_{1}^{2}\tr_s(k_{\tT})(t,s,s)ds-\int_1^2k_{\tT,-}(t,s,s)ds+O({t^2})\quad\text{(By \Cref{prop71})}\\
		&=-1-\int_1^2\bk_{B,2,-}(t,s,s)ds+O(t^{0.1})\mbox{ (By Lemma \ref{lacon} and \ref{strkt})}\\
		&=-1+\Tr_s((N^\R-1) e^{-t\Delta_{B,2}^\R})+O(t^{0.1})\\
		&=\Tr_s(N^\R e^{-t\Delta_{B,2}^\R})+O(t^{0.1})\mbox{ (By \eqref{tracer1})}.
	\end{align*}
    \end{proof}
	
	\subsection{Appearance of Mysterious Terms}\label{partial72}

	By \Cref{lem77}, one dimensional version of \eqref{point-wise} and the dominated convergence theorem,
	\be\label{lacon1}\lim_{T\to\infty}\int_0^1t^{-1}|\Tr_s(N^\R e^{-t\Delta^\R_{t^{-7}T}})-\Tr_s(N^\R e^{-t(\Delta_{B,1}^\R\oplus\Delta_{B,2}^\R)})|dt=0.\ee
	Let \[\tilde{\zeta}_T(z):=\frac{1}{\Gamma(z)}\int_0^1t^{z-1}(\Tr_s(N^\R e^{-t\Delta^\R_T})-\Tr_s(N^\R e^{-t\Delta^\R_{t^{-7}T}}))dt.\]
	
	By (\ref{ntn12}), (\ref{tracens}), and (\ref{lacon1}), one can see that
	
	\begin{thm}\label{main1}
		As $T\to\infty,$
		\be\label{eq713}(\zeta^{\mS}_{T,\la})'(0)=\sum_{i=1}^2(\zeta_i^{\mS})'(0)+\tilde{\zeta}_T'(0)\chi(Y)\rank(F)+o(1).\ee
		In particular, if $\dim(H^k(M;F))=\dim(H^k_{\abs}(M_1;F_1))+\dim(H^k_{\rel}(M_2;F_2))$ for all $k$, then $\zeta_{T,\la}=\zeta_T$ whenever $T$ is large enough. As a result, by Theorem \ref{larcon} and \eqref{eq713}, if $$\dim(H^k(M;F))=\dim(H^k_{\abs}(M_1;F_1))+\dim(H^k_{\rel}(M_2;F_2))$$ for all $k$, then
		\be\begin{aligned}\label{eq714}&\ \ \ \ \log \T(g^{T\bm},h^{\F},\nabla^{\bar{F}})(T)\\
			&=\log \T_{\abs}(g^{TM_1},h^{F_1},\nabla^{F_1})+\log \T_{\rel}(g^{TM_2},h^{F_2},\nabla^{F_2})+\chi(Y)\rank(F)\tilde{\zeta}_T'(0)/2+o(1).\end{aligned}\ee
	\end{thm}

	\def\tz{\tilde{\zeta}}
	Similarly, let \[\tilde{\zeta}_{T,i}(z):=\frac{1}{\Gamma(z)}\int_0^1t^{z-1}(\Tr_s(N^\R e^{-t\Delta^\R_{T,i}})-\Tr_s(N^\R e^{-t\Delta^\R_{t^{-7}T,i}}))dt.\]

	\begin{thm}\label{main2}
		As $T\to\infty,$
		\begin{align*}&\ \ \ \ \log \mathcal{T}_{i}(g^{T\bm_i},h^{\F_i},\nabla^{\bar{F}_i})(T)=\log \T_{i}(g^{TM_i},h^{F_i},\nabla^{F_i})+\chi(Y)\rank(F)\tilde{\zeta}_{T,i}'(0)/2+o(1).\end{align*}
	\end{thm}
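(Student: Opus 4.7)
The plan is to follow the proof of \cref{main1} line by line, with the closed manifold $\bm$ replaced by the manifold-with-boundary $\bm_i$, the deformed Laplacian $\Delta_T$ replaced by $\tilde{\Delta}_{T,i}$ (equivalently $\Delta_{T,i}$, which has the same spectrum and supertraces by \cref{witwei}), and the sum $\Delta_1\oplus\Delta_2$ replaced by the single Hodge Laplacian $\Delta_i$ on $M_i$. The statement then amounts to asserting that after subtracting the undeformed heat trace on $M_i$, the only surviving contribution to the small-time part of the zeta function as $T\to\infty$ is the one-dimensional contribution coming from the half-tube $[-1,0]\times Y$ (resp. $[0,1]\times Y$), and this contribution is precisely captured by the coupled 1D zeta function $\tilde{\zeta}_{T,i}$.

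First I would adapt \cref{heatm} and \cref{conbd} to $\bm_i$: finite propagation speed (\cref{speed0}) together with the product structure of $g^{T\bm}$ and $f_T$ on the tube gives, up to $O(e^{-c/t})$ errors, that $\Tr_s(Ne^{-t\tilde{\Delta}_{T,i}})$ equals $\Tr_s(Ne^{-t\Delta_i})$ plus the product trace $\Tr_s(N\eta_i e^{-t\Delta^{\R}_{T,i}}\otimes e^{-t\Delta_Y})$ minus its analog with $\Delta^{\R}_{T,i}$ replaced by the undeformed 1D Laplacian $\Delta^{\R}_{B,i}$ on $[-1,0]$ or $[0,1]$. Here $\Delta^{\R}_{T,i}$ denotes the 1D Witten Laplacian on the half-tube with matching boundary condition on the inward end ($s=\pm 1$) and the abs/rel condition at $s=0$ inherited from $\bm_i$. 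Granted the analog of \cref{strkt} giving $\Tr_s(e^{-t\Delta^{\R}_{T,i}})=\pm 1$, the $N^Y$-factor cancels and the leftover is $\Tr_s(N^\R e^{-t\Delta^{\R}_{T,i}})\chi(Y)\rank(F)$.

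The coupling step of \S\ref{cons} then applies verbatim. Using \cref{heat1form} and \cref{lacon} restricted to the half-interval, one obtains
\[
\lim_{T\to\infty}\int_0^1 t^{-1}\bigl|\Tr_s(N^\R e^{-t\Delta^{\R}_{t^{-7}T,i}})-\Tr_s(N^\R e^{-t\Delta^{\R}_{B,i}})\bigr|\,dt=0,
\]
and subtracting and re-adding the coupled heat trace produces the difference $\tilde{\zeta}_{T,i}'(0)$ that appears in the statement. Combined with the anomaly/heat-trace bookkeeping already assembled in the proof of \cref{main1}, this yields the claimed asymptotic.

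The main technical obstacle is the supertrace identity for the 1D half-tube model. The proof of \cref{strkt} relied on the odd symmetry of $f_T$ about $s=0$, which is broken once one end of the interval is replaced by an abs or rel boundary. I would handle this by imitating the $\bar\Delta$-trick of \cref{strkt}: extend $f_T|_{[-1,0]}$ (resp. $f_T|_{[0,1]}$) to a smooth function on $(-\infty,0]$ (resp. $[0,\infty)$) with the same Morse-type behavior near $s=-1$ (resp. $s=1$), and apply \cref{eigencon} to show its Witten Laplacian has a one-dimensional kernel in the appropriate degree; combined with \cref{heat1form} on the complementary region this pins the supertrace to $\pm 1$. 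Once this identity is available, the rest of the proof is a routine repetition of \S\S\ref{glus}--\ref{cons}.
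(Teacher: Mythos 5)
Your overall plan --- repeat the proof of Theorem \ref{main1} with $\Delta_T$ on $\bar{M}$ replaced by $\tilde{\Delta}_{T,i}$ on $\bar{M}_i$ and $\Delta_1\oplus\Delta_2$ replaced by the single $\Delta_i$ --- is what the paper intends, and it does go through. However, you have misidentified the technical obstacle. The identity $\Tr_s(e^{-t\Delta^\R_{T,i}})=\pm 1$ needs no heat-kernel argument and no extension of $f_T$: by McKean--Singer on a compact interval, this supertrace is constant in $t$ and $T$ and equals the Euler characteristic with the given boundary condition, namely $1$ for absolute and $-1$ for relative, which is the same value as $\Tr_s(e^{-t\Delta_{B,i}})$ in \eqref{tracer} and \eqref{tracer1}. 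It is this equality between the deformed and undeformed one-dimensional models (rather than the value $\pm 1$ itself) that kills the $\Tr_s(N^Y e^{-t\Delta_Y})$ contribution when one subtracts. Also, Lemma \ref{strkt} does not use the odd symmetry of $f_T$; that symmetry is invoked only in the lines leading to \eqref{heatm2}, where it is a convenient way to see $\chi=0$ for the mixed abs/rel problem on $[-2,2]$.

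The genuine analog of Lemma \ref{strkt} you need arises in the coupling step for $\bar{M}_2$ (relative boundary): to estimate $\int_1^2[k_{\tilde{T},2,+}-k_{B,2,+}]\,ds$ one cannot run the Duhamel argument of Proposition \ref{lacon} directly in degree $1$, because the harmonic $1$-form $e^{f_T}\,ds$ prevents $k_{\tilde{T},2,+}(t,s,1)$ from being small; one instead controls the degree-$0$ comparison (where the analog of Lemma \ref{kx0} does apply) and converts via $\int_1^2\tr_s(k_{\tilde{T},2}(t,s,s))\,ds\to -1$. But precisely because $\Delta^\R_{T,2}$ on $[0,2]$ with relative conditions already has a one-dimensional kernel concentrated on $[1,2]$, this localization follows directly from $\Tr_s(e^{-t\Delta^\R_{\tilde{T},2}})=-1$ together with the analog of Proposition \ref{heat1form} on $[0,1]$, with no auxiliary half-line operator. (For $\bar{M}_1$ no conversion is even needed, since $k_{\tilde{T},1,+}(t,s,-1)$ is itself small.) The $\bar{\Delta}$-extension in Lemma \ref{strkt} was necessary only because $\Delta^\R_T$ on $[-2,2]$ has $\chi=0$, so McKean--Singer pins down only the sum of the $[-2,-1]$ and $[1,2]$ contributions; in the one-boundary setting of $\bar{M}_i$ that obstruction is absent, and your proposed construction, while sound, is superfluous.
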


	To prove Theorem \ref{main}, it suffices to show that $\tilde{\zeta}_T'(0)=\log(2)-T+o(1)$ and $\tilde{\zeta}_{T,i}'(0)=-T/2+o(1)$, which will be accomplished in the next section. More precisely, see Proposition \ref{zeta10} and Proposition \ref{zeta11}.

	\section{Ray-Singer Metrics associated with $S^1$ and $[-2,2]$}\label{last}
Proving Theorem \ref{main} completely requires computing the mystery terms \(\tilde{\zeta}_T'(0)\) and \(\tilde{\zeta}_{T,i}'(0),i=1,2\) in \Cref{main1} and \Cref{main2}.  Since \(\tilde{\zeta}_T'(0)\) and \(\tilde{\zeta}_{T,i}'(0)\) for \( i = 1,2 \) are, by definition, independent of \( M, F, \) and \( Y \), we can compute them by studying the Ray-Singer metrics associated with the simple spaces \( S^1 \) and \([-2,2]\).  
    
	\subsection{Ray-Singer metrics associated with $S^1$}
	First, let $p_T\in C^\infty(\R)$ be a smooth function with period 8 as follows
	\begin{enumerate}[(1)]
		\item $p_T(s)=f_T(s),\forall s\in[-2,2]$;
		\item $p_T(s)=f_T(4-s),\forall s\in[2,6]$.
	\end{enumerate}
	
	Let $S^1(8)$ denote the circle with length 8. Then we regard $S^1(8)$ as the interval $[-2,6]$ with $-2$ and $6$ identified, so we could regard $p_T$ as a smooth function on $S^1(8)$.

	Let $d_{T}=d+dp_T\wedge$ be the Witten deformation of de Rham differentials on $S^1(8)$, and $\Delta^{S^1}_{T}:=d_{T}d_{T}^*+d_{T}^*d_{T}$  be its Witten Laplacian. Let $H(S^1(8),d_{T})$ be the cohomology for $d_{T},$
	and $|\cdot|_{RS,T}$ be the $L^2$-metric on $\det(H^*(S^1(8),d_{T}))$  induced by $\Delta^{S^1}_{T}$-harmonic forms. 
	
	Notice that $\tau: H^*(S^1(8),d)\to H^*(S^1(8),d_{T})$, $[w]\mapsto [e^{-p_T}w]$ is an isomorphism.
	
	Let $\T(S^1)(T)$  be the analytic torsion for $\Delta^{S^1}_{T}$, $\|\cdot\|_{\RS,T}=|\cdot|_{\RS,T}\T(S^1)(T)$ be the associated Ray-Singer metric on $\det(H^*(S^1(8),d_{T})).$
	
	\begin{lem}\label{s1}
		$\log \T(S^1)(0)=\log \T(S^1)(T)-2\log(2)+T+o(1)$ as $T\to\infty.$
	\end{lem}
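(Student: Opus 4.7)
The strategy is to invoke the Bismut--Zhang anomaly formula: since $S^1(8)$ is one-dimensional the Euler form of $TS^1(8)$ vanishes, so the Ray--Singer metric $\|\cdot\|_{\RS,T}$ on the line $\det H^*(S^1(8),d_T)$ is independent of $T$ (and this line is canonically identified with $\det H^*(S^1(8),d)$ via $\tau$). I will compute $\|\cdot\|_{\RS,T}^2$ explicitly at $T=0$ and at large $T$, equate the two via the anomaly formula, and read off the asymptotic of $\T(S^1)(T)$ from the ratio of Hodge metrics.

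First I identify the harmonic representatives. From $d_T = e^{-p_T}\, d\, e^{p_T}$ and $d_T^* = e^{p_T}\, d^*\, e^{-p_T}$ one checks that $h_0 := e^{-p_T}$ is $\Delta_T^{S^1}$-harmonic in degree $0$ and $h_1 := e^{p_T}\,ds$ is harmonic in degree $1$. Set
\begin{equation*}
A_T := \int_{S^1(8)} e^{-2p_T}\, ds = \|h_0\|_{L^2}^2, \qquad B_T := \int_{S^1(8)} e^{2p_T}\, ds = \|h_1\|_{L^2}^2.
\end{equation*}
Under $\tau$, $[1] \mapsto [h_0]$, while $[ds] \mapsto [e^{-p_T}\,ds]$. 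Using the identity $d_T(h\, e^{-p_T}) = h'\, e^{-p_T}\, ds$ for any smooth $8$-periodic $h$, one writes $[e^{-p_T}\,ds] = (8/B_T)[h_1]$ by choosing $h$ to satisfy $h' = 1 - (8/B_T) e^{2p_T}$, which admits a periodic solution since the right-hand side has zero integral over $S^1(8)$.

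Next I evaluate $\|\sigma\|_{\RS,T}^2$ for $\sigma := [1] \otimes [ds]^{\,*} \in \det H^*$. At $T=0$ the harmonic representatives are $1$ and $ds$, each of $L^2$-norm $\sqrt 8$, so $\|\sigma\|_{\RS,0}^2 = \T(S^1)(0)^2$. At parameter $T$, $\sigma$ corresponds to $h_0 \otimes (B_T/8)\, h_1^{\,*}$, giving $\|\sigma\|_{\RS,T}^2 = \T(S^1)(T)^2 \cdot A_T B_T / 64$. The anomaly formula (Theorem~0.1 of \cite{bismutzhang1992cm}) then yields
\begin{equation*}
2\log \T(S^1)(0) = 2\log \T(S^1)(T) + \log(A_T B_T) - 6\log 2.
\end{equation*}

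Finally I estimate $A_T B_T$. The function $p_T$ is identically $-T/2$ on the plateaus $[-2,-1]\cup[5,6]$ (total length $2$) and identically $T/2$ on $[1,3]$ (length $2$), contributing $2e^T$ to $A_T$ and $2e^T$ to $B_T$ respectively. On the transition intervals $[-1,1]$ and $[3,5]$, using the substitution $u = p_T(s)$ together with $|p_T'| \sim T$ on the bulk of the transition, one gets a contribution of $O(e^T/T)$ to each of $A_T$ and $B_T$; the tiny cutoff region of width $e^{-T^2}$ near the boundary of the transition (introduced by the $\rho$-cutoff in the definition of $f_T$) is easily seen to contribute at most $O(e^T e^{-T^2})$ and is therefore negligible. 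Hence $A_T = 2e^T(1+O(1/T))$ and $B_T = 2e^T(1+O(1/T))$, so $\tfrac{1}{2}\log(A_T B_T) = T + \log 2 + o(1)$, and substituting into the identity above yields $\log \T(S^1)(0) = \log \T(S^1)(T) + T - 2\log 2 + o(1)$. The one delicate point is the Laplace-type estimate in this last step: the target constant $-2\log 2$ arises as a cancellation between the $-6\log 2$ from the circle length and the $+\log 2$ coming from the leading behavior $A_T B_T \sim 4 e^{2T}$, so the bookkeeping of these two contributions has to be done carefully.
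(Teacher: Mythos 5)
Your proof is correct and takes essentially the same route as the paper: invoke the Bismut--Zhang anomaly formula to conclude that $\|\cdot\|_{\RS,T}$ pulled back by $\tau$ is $T$-independent, compute the $L^2$-metric on the determinant line via the explicit harmonic forms $e^{\mp p_T}$, and estimate the resulting integrals (the paper streamlines the bookkeeping slightly by noting $A_T=B_T=\alpha(T)$ from the odd symmetry of $f_T$ and by choosing the evaluating element $[1]\otimes[e^{2p_T}ds]^{-1}$, which makes the $T$-side identically $1$, but the content is the same). One small slip in the delicate step you flag: since $p_T'$ vanishes to first order at the peak of the transition (the local profile is $T/2-T(s-1)^2/2$), a Laplace/Gaussian estimate shows the transition intervals contribute $O(e^T/\sqrt T)$ rather than your claimed $O(e^T/T)$ --- the heuristic substitution $u=p_T(s)$ with $|p_T'|\sim T$ breaks down near the critical point --- but this is still $o(e^T)$, so $A_T B_T = 4e^{2T}(1+o(1))$ and the final $-2\log 2 + T + o(1)$ is unaffected.
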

	\begin{proof}
		It follows from \cite[Theorem 4.7]{bismutzhang1992cm} that $\|\cdot\|_{\RS,0}=\tau^*\|\cdot\|_{\RS,T}$. To show $\log \T(S^1)(0)=\log \T(S^1)(T)-2\log(2)+T+o(1)$, it suffices to show $\log |\cdot|_{\RS,0}=\log\tau^*|\cdot|_{\RS,T}+2\log(2)-T-o(1)$. \\
		Let 
		\[\a(T):=\int_{S^1(8)}e^{2p_T}ds,\]
		then $[e^{2p_T}ds]=[\a(T)ds/8]$ in $H^*(S^1(8),d)$. Moreover, $\a(T)ds$ is $\Delta^{S^1}_{0}$-harmonic.
		Let $\rho_T:=[1]\otimes [e^{2p_T}ds]^{-1}\in \det(H^*(S^1,d)),$ then one computes
		\[|\rho_T|^2_{\RS,0}=\frac{{\int_{S^1(8)}1ds}}{\int_{S^1(8)}{(\a(T)/8)^2}ds}=\frac{64}{(\a(T))^2}.\]
		On the other hand, $\tau[1]=[e^{-p_T}],\tau[e^{2p_T}ds]=[e^{p_T}ds]$. Since $e^{-p_T}$ and $e^{p_T}ds$ are both $\Delta^{S^1}_T$-harmonic,
		\[\tau^*|\rho_T|^2_{RS,T}=|\tau(\rho_T)|^2_{RS,T}=\frac{\int_{S^1(8)}e^{-2p_T}ds}{\int_{S^1(8)}e^{2p_T}ds}=1.\]
		Here the last equality follows from the fact that $f_T$ is an odd function on $[-2,2]$, thus $\int_{S^1(8)}e^{-2p_T}ds=\int_{S^1(8)}e^{2p_T}ds.$\\
		Hence, 
		\[\log|\cdot|_{\RS,0}=\log\tau^*|\cdot|_{\RS,T}-\log(\a(T))+3\log 2.\]
		A simple calculation yields $\a(T)=2e^T(1+o(1)). $ The lemma follows.
	\end{proof}
	\ \\
	Let $\Delta^{[0,2]}_1$ be the usual Hodge Laplacian on $\Omega([0,2])$ with absolute boundary conditions,
	$\Delta^{[0,2]}_2$ be the usual Hodge Laplacian on $\Omega([0,2])$ with relative boundary conditions. Let $\T([0,2],i)$ be the analytic torsion with respect to $\Delta^{[0,2]}_i(i=1,2).$
	\\
	By a straightforward computation (recall that our $S^1$ has length $8$),
	\begin{align}\begin{split}\label{tzero}
			\log \T(S^1)(0)&=-3\log(2),\\
			\log \T([0,2],i)&=-\log(2).
	\end{split}\end{align}
	Hence, by Lemma \ref{s1} and (\ref{tzero}), 
	$\log \T(S^1)(T)=-\log(2)-T+o(1)$.\\
	While by \eqref{eq714} and (\ref{tzero})
	\[\log \T(S^1)(T)=-2\log(2)+\tilde{\zeta}'_T(0).\]
	Hence, 
	\begin{prop}\label{zeta10}
		$\tilde{\zeta}_T'(0)=\log(2)-T+o(1).$
	\end{prop}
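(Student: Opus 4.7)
The plan is to solve for $\tilde{\zeta}_T'(0)$ by equating two independent asymptotic expressions for $\log \T(S^1)(T)$ as $T \to \infty$: one arising from the Ray--Singer metric argument of Lemma \ref{s1}, and one arising from the gluing formula Theorem \ref{main1} applied directly to the model manifold $S^1(8)$.

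First, I would combine Lemma \ref{s1} with the explicit value $\log \T(S^1)(0) = -3\log(2)$ from (\ref{tzero}) to obtain
\[
\log \T(S^1)(T) = -\log(2) - T + o(1).
\]
Next, I would view $S^1(8)$ as a closed manifold with $p_T$ playing the role of the Witten-deformation function. Cutting along the two-point hypersurface $Y$ consisting of the inflection points at $s = 0$ and $s = 4$ gives $\chi(Y) = 2$, while $\rank(F) = 1$ for the trivial bundle; the two pieces $M_1, M_2$ are both diffeomorphic to $[0,2]$, carrying absolute/relative boundary conditions respectively. The cohomological hypothesis of the ``in particular'' clause in Theorem \ref{main1} is immediate: $H^0(S^1)$ splits as $H^0_{\mathrm{abs}}([0,2]) \oplus H^0_{\mathrm{rel}}([0,2]) = \mathbb{C} \oplus 0$, and $H^1(S^1)$ as $0 \oplus \mathbb{C}$. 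Using the direct values $\log \T([0,2], i) = -\log(2)$ from (\ref{tzero}), Theorem \ref{main1} yields
\[
\log \T(S^1)(T) = -2\log(2) + \tilde{\zeta}_T'(0) + o(1).
\]

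Equating the two expressions gives $\tilde{\zeta}_T'(0) = \log(2) - T + o(1)$, proving the proposition. The substantive work has all been done in Lemma \ref{s1} (which uses the Bismut--Zhang anomaly formula on $S^1$, explicitly computing the ratio $\alpha(T) = \int_{S^1(8)} e^{2p_T} ds = 2e^T(1+o(1))$) and in Theorem \ref{main1}; at this point we are only extracting a numerical consequence by matching coefficients. The only check needed is that the periodic extension $p_T$ on $S^1(8)$ satisfies the structural assumptions on the deformation function stated in \cref{aw}, which is immediate from its construction by reflection and periodic extension of $f_T$.
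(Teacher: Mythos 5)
Your proof is correct and matches the paper's argument essentially step for step: combine Lemma~\ref{s1} with the explicit value $\log\T(S^1)(0)=-3\log 2$ from~(\ref{tzero}) to get one asymptotic expansion of $\log\T(S^1)(T)$, apply the ``in particular'' clause of Theorem~\ref{main1} to $S^1(8)$ cut at the two separatrix points to get the other, and equate. Your explicit verification of the cohomological splitting hypothesis and of the compatibility of $p_T$ with the structural assumptions of \cref{aw} is left implicit in the paper but is exactly what makes the application of Theorem~\ref{main1} legitimate.
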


	\subsection{Ray-Singer metric associated with $[-2,2]$}
	Let $q_T$ be a smooth even function on $[-2,2],$ such that for $s\in[-2,0]$, $q_T(s)=f_T(s-2)$.
	Let $d_{T,2}=d+dq_T\wedge$ be the Witten deformation of de Rham differentials on $[-2,2]$, and $\Delta^{[-2,2]}_{T,2}$ be its Witten Laplacian with relative boundary conditions. Let $H_{\rel}([-2,2],d_{T,2})$ be the cohomology for $d_{T,2},$
	and $|\cdot|_{RS,T,2}$ be the $L^2$-metric on $\det(H_{\rel}^*([-2,2],d_{T,2}))$  induced by $\Delta^{\R}_{T,2}$-harmonic forms. 
	
	Notice that $\tau_2: H_{\rel}^*([-2,2],d)\to H_{\rel}^*([-2,2],d_{T})$, $[w]\mapsto [e^{-q_T}w]$ is an isomorphism.
	
	Let $\mathcal{T}_{2}(T)$  be the analytic torsion for $\Delta^{\R}_{T,2}$, $\|\cdot\|_{\RS,T,2}=|\cdot|_{\RS,T,2}\mathcal{T}_2(T)$ be the associated Ray-Singer metric on $\det(H_{\rel}^*([0,2],d_{T})).$
	
	Similarly, for $d_{T,1}:=d-dq_T$, let $\Delta^\R_{T,2}$ be its Witten Laplacian with absolute boundary conditions. Let $\mathcal{T}_{1}(T)$  be the analytic torsion for $\Delta^{\R}_{T,1}$, $\|\cdot\|_{\RS,T,1}=|\cdot|_{\RS,T,1}\mathcal{T}_1(T)$ be the associated Ray-Singer metric on $\det(H_{\abs}^*([-2,0],d_{T})).$
	
	Notice that $\tau_1: H_{\abs}^*([-2,2],d)\to H_{\abs}^*([-2,2],d_{T})$, $[w]\mapsto [e^{q_T}w]$ is an isomorphism.
	
	By \cite[Theorem 3.4]{bruning2013gluing},
	\begin{lem}\label{expert}
		\[\partial_T \tau_i^*\log \|\cdot\|_{\RS,T,i}=0, i=1,2.\]
	\end{lem}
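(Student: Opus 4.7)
The plan is to invoke the anomaly formula for the Ray-Singer metric on manifolds with boundary, in the Brüning-Ma form \cite[Theorem 0.1]{bruning2006anomaly} together with the Bismut-Zhang variation formula for $h^F$ \cite[Theorem 0.1]{bismutzhang1992cm}, applied to the one-parameter family of Witten-deformed complexes $(\Omega_{\bd}^*([-2,2]), d_{T,i})$ on the trivial line bundle. This mirrors exactly the use of \cite[Theorem 4.7]{bismutzhang1992cm} in the proof of Lemma \ref{s1}, but in the setting of a manifold with boundary where the relevant metric invariance requires a boundary-aware anomaly formula.

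First I would translate the Witten deformation into a variation of Hermitian structures: as explained in \cref{witwei}, the complex $(d_{T,i}, h_0)$ on the trivial bundle with $h_0 \equiv 1$ is related, via conjugation by $e^{\pm q_T}$, to the fixed differential $d$ equipped with the Hermitian metric $h_T = e^{\mp 2 q_T}$. Under this identification $\tau_i^*\|\cdot\|_{\RS,T,i}$ becomes the Ray-Singer metric of $(d, h_T)$ on the fixed determinant line $\det H^*_{\bd}([-2,2], d)$. Therefore $\partial_T \log \|\cdot\|_{\RS,T,i}$ is exactly the variation of the Ray-Singer metric of the trivial flat line bundle under a smooth change of Hermitian metric.

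By the Bismut-Zhang-Brüning-Ma anomaly formula, this variation splits into an interior contribution involving the Kamber-Tondeur $1$-form $\theta(h_T) = h_T^{-1}\partial_T h_T = \mp 2\,\partial_T q_T$ paired with a Mathai-Quillen type current, plus a Brüning-Ma boundary correction at $\{-2,2\}$. On a $1$-dimensional manifold these expressions are explicit: the bulk contribution reduces to a total derivative of a function of $q_T$, and the boundary term is built from $dq_T/ds$ at $s = \pm 2$. Both terms vanish by inspection. The boundary term is zero because $f_T'(\pm 2) = 0$ (which is the content of the hint, since $f_T|_{[1,2]} \equiv T/2$ is constant and by the even extension defining $q_T$ this yields $dq_T/ds(\pm 2) = 0$). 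The bulk total derivative collapses to $q_T(2) - q_T(-2) = 0$ by the evenness of $q_T$. Combining gives $\partial_T \log \|\cdot\|_{\RS,T,i} = 0$.

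The main obstacle is not analytic but bookkeeping: one must track the precise sign conventions in the Brüning-Ma formula and verify that, specialized to the trivial line bundle on $[-2,2]$ with absolute (resp. relative) boundary conditions, the characteristic forms appearing reduce to the elementary expressions above. Once this bookkeeping is done, the vanishing is a direct consequence of the symmetry properties of $q_T$ at the endpoints, which is why the author labels the verification a routine exercise.
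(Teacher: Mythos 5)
Your high-level strategy (conjugate the Witten complex to a fixed $d$ with varying Hermitian metric $h_T$, then invoke the anomaly formula on a $1$-manifold with boundary) is the natural way to prove the lemma, and the reduction to showing that the bulk and boundary contributions to the anomaly vanish is right. But the identification of \emph{which} boundary quantity controls the anomaly, and hence \emph{why} it vanishes, is wrong, and the hint has been misread.

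First, $dq_T/ds(\pm 2) \neq 0$. By the construction, $q_T$ near $s=-2$ coincides (up to an orientation-reversing shift) with $f_T$ near $s=0$, not near $s=\pm 2$; and by condition (c) in the definition of $f_T$, $|f_T'(0)| \geq \Cn_1 T > 0$. So $q_T'(\pm 2) = \pm f_T'(0)$ is large, not zero. Your chain of reasoning ``$f_T'(\pm 2)=0 \Rightarrow dq_T/ds(\pm 2)=0$'' confuses the endpoint $\pm 2$ of the $q_T$ interval with the endpoint $\pm 2$ of the $f_T$ interval; after the shift they do not correspond.

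Second, the boundary anomaly is not controlled by $\partial_s q_T|_{\partial}$ at all. The variation of the Ray-Singer metric under a $T$-family of Hermitian metrics $h_T = e^{\mp 2 q_T}$ on a flat bundle over a $1$-manifold with boundary, for fixed $g$, has no bulk term (because $e(TM)=0$ in odd dimension — no ``total derivative'' argument or evenness of $q_T$ is needed), and the boundary contribution is a multiple of $\sum_{p \in \partial M} \partial_T \log h_T(p) = \mp 2 \sum_{p} \partial_T q_T(p)$. This is confirmed by direct computation: writing $\alpha(T)=\int_{-2}^2 e^{2q_T}$, one finds by Hodge theory and Gelfand--Yaglom that
\begin{equation*}
\tau_2^*\|[ds]^{-1}\|_{\RS,T,2} = \text{const} \cdot e^{\frac{1}{2}\big(q_T(2)+q_T(-2)\big)},
\end{equation*}
so $\partial_T \log \tau_2^* \|\cdot\|_{\RS,T,2} = \tfrac{1}{2}\,\partial_T\big(q_T(2)+q_T(-2)\big)$. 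This vanishes precisely because $q_T(\pm 2) = 0$ for all $T$ (the actual content of the paper's hint, which is garbled in the text but refers to the boundary \emph{values} of $q_T$, not boundary \emph{derivatives}). Had $q_T(\pm 2)$ been a nonconstant function of $T$ — for instance $q_T \equiv -T/2$ — the lemma would fail, even though $q_T' \equiv 0$ there, which shows concretely that $q_T'|_{\partial}$ is not the relevant quantity.

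So the gap is: you invoke the anomaly formula correctly, but the term you claim kills the boundary contribution ($dq_T/ds$ at $\pm 2$) is both nonzero and irrelevant, and the condition that actually makes the anomaly vanish ($\partial_T q_T(\pm 2)=0$, i.e. $q_T(\pm 2)\equiv 0$) is never established in your argument.
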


	\begin{lem}\label{s2}
		$\log \mathcal{T}_{i}(0)=\log \mathcal{T}_{i}(T)+{T}/2-\log(2)/2+o(1), i=1,2$ as $T\to\infty.$
	\end{lem}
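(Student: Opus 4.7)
The plan is to leverage Lemma \ref{expert}, which says that $\|\cdot\|_{\RS,T,i}$ is independent of $T$ after identification via $\tau_i$. Since $\|\cdot\|_{\RS,T,i} = |\cdot|_{\RS,T,i}\cdot\mathcal{T}_i(T)$, this reduces the claim to computing the ratio of pulled-back $L^2$-metrics on the one-dimensional determinant line of $H^*_\bd$:
\[ \log\mathcal{T}_i(0) - \log\mathcal{T}_i(T) = \log|\tau_i^*(\cdot)|_{\RS,T,i} - \log|\cdot|_{\RS,0,i}, \]
evaluated on any generator.

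In dimension one the cohomology is concentrated in a single degree: $H^0_{\abs}([-2,2];d)=\R\cdot[1]$ with $L^2$-norm $2$, and $H^1_{\rel}([-2,2];d)=\R\cdot[ds]$ with $L^2$-norm $2$; on the determinant line this gives generators of norms $\log|[1]|_{\RS,0,1}=\log 2$ and (after inversion in odd degree) $\log|[ds]^{-1}|_{\RS,0,2}=-\log 2$. The next step is to write down explicit $\Delta^{\R}_{T,i}$-harmonic representatives for $\tau_i$ of these generators. Using the factorizations $d_{T,1}=e^{q_T}\,d\,e^{-q_T}$ and $d_{T,2}=e^{-q_T}\,d\,e^{q_T}$, together with the dual factorizations for $d^*_{T,i}$, one verifies that $e^{q_T}\in\A_\abs$ is $\Delta^\R_{T,1}$-harmonic and represents $\tau_1[1]$, while $c_T e^{q_T} ds$ is the $\Delta^\R_{T,2}$-harmonic representative of $\tau_2[ds]$, where $c_T$ is pinned down by requiring that $c_T e^{q_T} ds - e^{-q_T} ds = d_{T,2} f$ admit a solution $f\in\A_\rel$, i.e. $f(\pm 2)=0$. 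Integrating the resulting first-order ODE $(e^{q_T}f)' = c_T e^{2q_T} - 1$ from $-2$ to $2$ gives $c_T = 4/\int_{-2}^{2}e^{2q_T}(s)\,ds$, and hence $|\tau_1[1]|_{\RS,T,1}^2 = \int e^{2q_T}\,ds$ and $|\tau_2[ds]^{-1}|_{\RS,T,2}^2 = \tfrac{1}{16}\int e^{2q_T}\,ds$.

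The final ingredient is the asymptotic $\int_{-2}^{2}e^{2q_T}(s)\,ds = 2e^T(1+o(1))$ as $T\to\infty$, which I would establish by the same Laplace-type inspection already used for $\alpha(T)$ in Lemma \ref{s1}: on the bulk interval where $q_T\equiv T/2$ (of total length $2$) the integrand equals $e^T$, while the transition layers near the endpoints contribute only $o(e^T)$ thanks to the explicit control on $|f_T'|$ and $|f_T''|$ from the construction in \cref{aw}. Substituting gives, in both cases,
\[ \log\mathcal{T}_i(0) - \log\mathcal{T}_i(T) = \tfrac{1}{2}\log\!\int_{-2}^{2}\!e^{2q_T}\,ds - \log 2 + o(1) = \tfrac{T}{2} - \tfrac{\log 2}{2} + o(1). \]

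The main obstacle I anticipate is correctly identifying the $\Delta^\R_{T,2}$-harmonic representative in the relative case, which requires solving the boundary-value ODE for $c_T$ and, crucially, handling the determinant-line inversion in odd degree that supplies the correct sign for the $-\log(2)/2$ term; the Laplace asymptotic is then routine given the construction of $q_T$.
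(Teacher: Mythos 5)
Your proof is correct and follows the same overall strategy as the paper: invoke Lemma \ref{expert} to reduce to comparing $L^2$-metrics on the determinant line, write down harmonic representatives, and use the Laplace asymptotic $\int_{-2}^{2}e^{2q_T}\,ds = 2e^T(1+o(1))$. The one place you diverge from the paper is the choice of generator in the relative case. The paper takes the class $[e^{2q_T}\,ds]\in H^1_{\rel}([-2,2],d)$ rather than $[ds]$: its image under $\tau_2$ is $[e^{q_T}\,ds]$, which is already $\Delta^{[-2,2]}_{T,2}$-harmonic on the nose, so the paper never needs to normalize. (On the de Rham side $[e^{2q_T}\,ds]=\frac{\alpha(T)}{4}[ds]$ with harmonic representative $\frac{\alpha(T)}{4}ds$, giving $|\rho_{T,2}|_{\RS,0,2}=2/\alpha(T)$ and $|\tau_2\rho_{T,2}|_{\RS,T,2}=\alpha(T)^{-1/2}$ directly.) Your version, starting from $[ds]$, has to solve $(e^{q_T}f)'=c_T e^{2q_T}-1$ with $f(\pm 2)=0$ to pin down the harmonic representative $c_T e^{q_T}\,ds$; this is a short but genuine extra step. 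Both generator choices of course give the same $T$-independent ratio $\tfrac12\log\alpha(T)-\log 2$, so your scaling factors $c_T=4/\alpha(T)$ and the $\tfrac{1}{16}$ in $|\tau_2[ds]^{-1}|^2_{\RS,T,2}=\tfrac{1}{16}\int e^{2q_T}ds$ all cancel correctly. Your handling of the $i=1$ case (generator $[1]$, harmonic representative $e^{q_T}$ via $d_{T,1}=e^{q_T}de^{-q_T}$) is the natural one and what the paper's ``Similarly'' implies. In short: same method, slightly less efficient bookkeeping in one of the two cases; no gaps.
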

	\begin{proof}
		By Lemma \ref{expert}, it suffices to show $\log |\cdot|_{\RS,0,i}=\log\tau_i^*|\cdot|_{\RS,T,i}-T/2+\log(2)/2+o(1)$. 
		
		Let 
		\[\a(T):=\int_{-2}^2e^{2q_T}ds,\]
		then $[e^{2q_T}ds]=[\a(T)ds/4]$ in $H_{\rel}^*([-2,2],d)$. Moreover, $\a(T)ds$ is $\Delta^{[-2,2]}_{2}$-harmonic. Here $\Delta^{[-2,2]}_2$ is the restriction of $-\p_s^2$ on $[-2,2]$ with relative boundary conditions.
		
		Let $\rho_{T,2}:=[e^{2q_T}ds]^{-1}\in \det(H_{\rel}^*([-2,2],d)),$ then one computes
		\[|\rho_{T,2}|^2_{\RS,0,2}=\frac{1}{\int_{-2}^2(\a(T)/4)^2ds}=\frac{4}{(\a(T))^2}.\]
		On the other hand, $\tau_2[e^{2q_T}ds]=[e^{q_T}ds]$. Since $e^{q_T}ds$ is $\Delta^{[-2,2]}_{T,2}$-harmonic,
		\[\tau_2^*|\rho_{T,2}|^2_{RS,T,2}=|\tau_2(\rho_{T,2})|^2_{RS,T,2}=\frac{1}{{\int_{-2}^2(e^{q_T})^2ds}}=\frac{1}{{\a(T)}}.\]
		Hence, 
		\[\log|\cdot|_{\RS,0,2}=\log\tau_2^*|\cdot|_{\RS,T,2}-\log(\a(T))/2+\log(2).\]
		A simple calculation yields $\a(T)=2e^T(1+o(1)). $ 
		Hence, 
		\[\log|\cdot|_{\RS,0,2}=\log\tau_2^*|\cdot|_{\RS,T,2}-T/2+\log(2)/2+o(1).\]
		Similarly,
		\[\log|\cdot|_{\RS,0,1}=\log\tau^*|\cdot|_{\RS,T,1}-T/2+\log(2)/2+o(1).\]

	\end{proof}
	\ \\
	Let $\Delta^{[-2,2]}_{1}$ be the usual Hodge Laplacian on $\Omega([-2,2])$ with absolute boundary conditions,
	$\Delta^{[-2,2]}_{2}$ be the usual Hodge Laplacian on $\Omega([-2,2])$ with relative boundary conditions. Let $\mathcal{T}([-2,2],i)$ be the analytic torsion with respect to $\Delta^{[-2,2]}_{i}(i=1,2).$
	
	Similarly, let $\Delta^{[-1,1]}_{1}$ be the usual Hodge Laplacian on $\Omega([-1,1])$ with absolute boundary conditions,
	$\Delta^{[-1,1]}_{2}$ be the usual Hodge Laplacian on $\Omega([-1,1])$ with relative boundary conditions. Let $\mathcal{T}([-1,1],i)$ be the analytic torsion with respect to $\Delta^{[-1,1]}_{i}(i=1,2).$
	\\
	By a straightforward computation 
	\begin{align}\begin{split}\label{tzero1}
			\log \mathcal{T}([-2,2],i)&=-3\log(2)/2,\\
			\log \mathcal{T}([-1,1],i)&=-\log(2).
	\end{split}\end{align}
	Hence, by Lemma \ref{s2} and (\ref{tzero1}), 
	$\log \mathcal{T}_{i}(T)=-3\log(2)/2-T/2+\log(2)/2+o(1)$.\\
	While by Theorem \ref{main1} and (\ref{tzero1})
	\[\log \mathcal{T}_{i}(T)=-\log(2)+\tilde{\zeta}'_{i,T}(0)+o(1).\]
	Hence, 
	\begin{prop}\label{zeta11}
		$\tilde{\zeta}_{T,i}'(0)=-T/2+o(1).$
	\end{prop}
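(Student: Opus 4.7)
The plan is to pin down $\tilde{\zeta}'_{T,i}(0)$ by computing the one-dimensional analytic torsion $\log \mathcal{T}_i(T)$ in two independent ways and matching the expressions. The first route is direct and purely computational: Lemma \ref{s2} gives
\[\log \mathcal{T}_i(T) = \log \mathcal{T}_i(0) - T/2 + \log(2)/2 + o(1),\]
and at $T=0$ the Witten Laplacian reduces to the usual Hodge Laplacian on $\Omega([-2,2])$, so $\mathcal{T}_i(0) = \mathcal{T}([-2,2], i)$. Plugging in the elementary evaluation $\log \mathcal{T}([-2,2], i) = -3\log(2)/2$ from (\ref{tzero1}) then yields
\[\log \mathcal{T}_i(T) = -\log(2) - T/2 + o(1).\]

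The second route goes through the coupling technique of \cref{cons}. Theorem \ref{main2} (or equivalently Theorem \ref{main1}) applied to the one-dimensional model with $[-2,2]$ playing the role of $\bm_i$ and $[-1,1]$ playing the role of the ``bulk'' $M_i$ after truncating the tube pieces $[-2,-1]$ and $[1,2]$ gives
\[\log \mathcal{T}_i(T) = \log \mathcal{T}([-1,1], i) + \tilde{\zeta}'_{T,i}(0) + o(1) = -\log(2) + \tilde{\zeta}'_{T,i}(0) + o(1),\]
where in the second equality I used $\log \mathcal{T}([-1,1], i) = -\log(2)$ from (\ref{tzero1}). Subtracting the two identities for $\log \mathcal{T}_i(T)$ immediately isolates $\tilde{\zeta}'_{T,i}(0) = -T/2 + o(1)$.

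The main obstacle is conceptual rather than technical: one must verify that Theorem \ref{main2}, stated in the higher-dimensional setting, specializes cleanly to the 1D model on $[-2,2]$, and that the $\tilde{\zeta}_{T,i}$ appearing on the right-hand side agrees with the object defined in \cref{cons}. In this 1D setting the cutting ``hypersurface'' is a single point with Euler characteristic $1$ and the trivial coefficient bundle has rank $1$, so the factor $\chi(Y)\rank(F)$ simply disappears. Once this matching is in place, the proposition is a one-line arithmetic comparison.
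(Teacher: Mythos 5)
Your argument reproduces the paper's proof essentially verbatim: compute $\log\mathcal{T}_i(T) = -\log(2) - T/2 + o(1)$ from Lemma~\ref{s2} together with $\log\mathcal{T}([-2,2],i) = -3\log(2)/2$ in (\ref{tzero1}), compute $\log\mathcal{T}_i(T) = -\log(2) + \tilde{\zeta}'_{T,i}(0) + o(1)$ from the coupled-time formula together with $\log\mathcal{T}([-1,1],i) = -\log(2)$, and subtract. Both displayed identities are correct.

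One point in your explanation needs fixing, though. You claim that in this 1D model the cutting hypersurface is a single point with $\chi(Y)=1$, ``so the factor $\chi(Y)\rank(F)$ simply disappears.'' But the relevant coefficient in Theorem~\ref{main2} is $\chi(Y)\rank(F)/2$, and the function $q_T$ on $[-2,2]$ is \emph{even}, vanishes at $\pm 2$, and is constant $\mp T/2$ on $[-1,1]$; it therefore has \emph{two} transition regions (one near each endpoint), so the cut $Y$ is two points and $\chi(Y)=2$. Only then is the prefactor $\chi(Y)\rank(F)/2 = 1$, which is precisely what your displayed formula already uses. Had you carried $\chi(Y)=1$ through consistently, the coefficient would be $1/2$ and you would land on the incorrect statement $\tilde{\zeta}'_{T,i}(0) = -T + o(1)$. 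The same bookkeeping ($\chi(Y)=2$, two cut points) is what makes the $S^1(8)$ computation behind Proposition~\ref{zeta10} come out right as well.
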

	
	\section{Proof of Theorem \ref{int6p}}\label{lastreal}
	
	\def\te{\tilde{e}}
	\def\tir{\tilde{r}}
	\def\tp{\tilde{\p}}
 To prove \Cref{int6p}, we use a finite-dimensional analogue of the "gluing formula" for torsions, specifically \cite[Proposition 2.6]{lesch2013gluing} (or \cite[Theorem 3.1 and Theorem 3.2]{milnor1966whitehead}). 
 
 \cite[Proposition 2.6]{lesch2013gluing} establishes a relation between the analytic torsion of a (finite) double complex, in which the vertical sequences are short exact, and the analytic torsion of its associated long exact sequence. Here, we apply this result to a double complex associated with \( \tO_{\sm}(\bm,\F)(T) \), where \( \tO_{\sm}(\bm,\F)(T) \) denotes the space spanned by eigenforms of \( \tD_T \) corresponding to eigenvalues in \( [0, \delta] \). More precisely, for sufficiently large \( T \), we first obtain the following sequence of maps:
\be\label{short}  
0 \to \H^k(\bm_2,\F_2)(T) \stackrel{\te_{k,T}}\rightarrow \tO_{\sm}^k(\bm,\F)(T) \stackrel{\tir_{k,T}}\rightarrow \H^k(\bm_1,\F_1)(T) \to 0.  
\ee  
Here, the maps are defined as  
\[
\te_{k,T}(u) := \tP^{\delta}(T) \cE(u), \quad \forall u \in \H^k(\bm_2,\F_2)(T);
\]
\[
\tir_{k,T}(u) := \tP_{T,1}(u|_{\bm_1}), \quad \forall u \in \tO_{\sm}^k(\bm,\F)(T),
\]
where \(\tP_{T,i}\) denotes the orthogonal projection onto \(\ker(\tilde{\Delta}_{T,i})=\H(\bm_i,\F_i)(T)\) for \( i = 1,2 \), and since \( \tilde\Delta_T = e^{f_T} \Delta_T e^{-f_T} \) (see also \cref{witwei} for more details), $\tP^\delta(T):=e^{f_T}\P^{\delta}(T)e^{-f_T}$ is the orthogonal projection associated with $\tO_{\sm}(\bm,\F)(T)$. 	For any $L^2$-form $w$ on $M_i$ (or $\bm_i$), let $\cE(w)\in L^2(\bm)$ be an extension of $w$, such that outside $M_i$ (or $\bm_i$), $\cE(w)=0.$

We will show that \eqref{short} is exact for sufficiently large \( T \) (\Cref{exathm}). Actually, we can prove that \(\tilde{e}_{k,T}\) and \(\tilde{r}_{k,T}^*\) are almost isometric (\Cref{last8p}).

Next, we consider the following chain complexes of finite-dimensional vector spaces:  
\begin{align}\label{com1}  
&0 \to \H^0(\bm_i,\F_i) \stackrel{0}\rightarrow \H^1(\bm_i,\F_i) \stackrel{0}\rightarrow \cdots \stackrel{0}\rightarrow \H^{\dim M}(\bm_i,\F_i) \to 0, \\  
\label{com2}  
&0 \to \tO_{\sm}^0(\bm,\F) \stackrel{d^{\F}}\rightarrow \tO_{\sm}^1(\bm,\F) \stackrel{d^{\F}}\rightarrow \cdots \stackrel{d^{\F}}\rightarrow \tO_{\sm}^{\dim M}(\bm,\F) \to 0.  
\end{align}  

We then show that \( d^{\F} \circ \te_{k,T} = 0 \) and \( \tir_{k,T} \circ d^{\F} = 0 \) (\Cref{exapp}), allowing us to associate a double complex with \eqref{short}–\eqref{com2}. Applying \cite[Proposition 2.6]{lesch2013gluing} to this double complex, along with the almost-isometry of \(\tilde{e}_{k,T}\) and \(\tilde{r}_{k,T}^*\) (\Cref{last8p}), completes the proof of \Cref{int6p}.
	
	To establish the almost-isometry of \(\tilde{e}_{k,T}\) and \(\tilde{r}_{k,T}^*\), we need to prove \Cref{last1p} and \Cref{last2p}.
	\def\Rr{{\mathcal{R}}}
	Let $\eta \in C^{\infty}([0,1])$, such that $\eta|_{[0,1 / 4]} \equiv 0,\left.\eta\right|_{[1 / 2,1]} \equiv 1$.  Recall that $Q_T: \Omega_{\mathrm{abs}}\left(M_1 ; F_1\right) \oplus \Omega_{\mathrm{rel}}\left(M_2 ; F_2\right) \rightarrow W^{1,2}\Omega(\bar{M}, \bar{F})$ is
	$$
	Q_T(u)(x):=\left\{\begin{array}{l}
		u_i(x), \text { if } x \in M_i, \\
		\eta(-s) u_1(-1, y) e^{-f_T(s)-T / 2}, \text { if } x=(s, y) \in[-1,0] \times Y, \\
		\eta(s) u_2(1, y) e^{f_T(s)-T / 2}, \text { if } x=(s, y) \in[0,1] \times Y ,
	\end{array}\right.
	$$
	for $u=(u_1,u_2)\in\Omega_{\mathrm{abs}}\left(M_1 ; F_1\right) \oplus \Omega_{\mathrm{rel}}\left(M_2 ; F_2\right)$. Set $\Q_T:=e^{f_T}Q_T.$ 
	
For any $L^2$-form $w$ on $M_i$ (or $\bm_i$), let $\cE(w)\in L^2(\bm)$ be an extension of $w$, such that outside $M_i$ (or $\bm_i$), $\cE(w)=0.$

	It can be observed that
	\begin{prop}\label{last1p} For $u \in \operatorname{ker}\left(\Delta_1\right) \oplus \operatorname{ker}\left(\Delta_2\right)$,
		$$
		\begin{array}{c}
			\left\|Q_T u-\cE(u)\right\|_{L^2}^2 \leq\frac{C\|u\|^2_{L^2}}{\sqrt{T}}, \\
			\left\|\mathcal{P}^\delta(T) Q_T(u)-\cE(u)\right\|_{L^2}^2 \leq \frac{C\|u\|_{L^2}^2}{\sqrt{T}}.
		\end{array}
		$$
		for some constant $C$ that is independent of $T$. 
		
		As a result,
		$$
		\begin{array}{c}
			\left\|\Q_T u-\cE(e^{f_T}u)\right\|_{L^2,T}^2 \leq\frac{C\|e^{f_T}u\|^2_{L^2,T}}{\sqrt{T}}, \\
			\left\|\tP^\delta(T) \Q_T(u)-\cE(e^{f_T}u)\right\|_{L^2,T}^2 \leq \frac{C\|e^{f_T}u\|_{L^2,T}^2}{\sqrt{T}}.
		\end{array}
		$$
		Recall that $\|\cdot\|_{L^2,T}$ is the norm induced by $g^{T\bm}$ and $h^{\F}_T:=e^{-2f_T}h^{\F}.$
		
		As a result, when $T$ is large enough, $\tP^\delta(T) \Q_T(u)$ spans $\tO_{s m}(\bar{M}, \bar{F})(T)$ for $u \in \operatorname{ker}\left(\Delta_1\right) \oplus \operatorname{ker}\left(\Delta_2\right)$.
	\end{prop}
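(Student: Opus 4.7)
My plan is to establish the two $L^2$ inequalities first, deduce the weighted versions by the isometric conjugation $w\mapsto e^{f_T}w$, and finally obtain the spanning statement by a dimension count using Theorem \ref{eigencon}.

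For the first inequality, observe that $Q_T u-\cE(u)$ is supported on $[-1,1]\times Y$; there it equals $\eta(-s)u_1(-1,y)e^{-f_T(s)-T/2}$ on $[-1,0]\times Y$ and $\eta(s)u_2(1,y)e^{f_T(s)-T/2}$ on $[0,1]\times Y$. By the definition of $f_T$, the exponentials behave like $e^{-T(s+1)^2/2}$ and $e^{-T(s-1)^2/2}$, so
\[\int_{-1}^0 e^{-T(s+1)^2}\,ds+\int_0^1 e^{-T(s-1)^2}\,ds=O(T^{-1/2}).\]
Combined with the trace bound $\int_Y |u_i((-1)^i,y)|^2\,\mathrm{dvol}_Y\leq C\|u_i\|_{L^2(M_i)}^2$, which holds with a $T$-independent constant since $u_i$ is harmonic and elliptic regularity bounds all its $H^k$-norms by $\|u_i\|_{L^2}$, this yields the first estimate.

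For the second inequality, I decompose
\[\P^\delta(T)Q_T u-\cE(u)=-(1-\P^\delta(T))Q_T u+(Q_T u-\cE(u)),\]
where the last term is handled by the first estimate. For the remaining term the spectral gap $\delta$ and the fact that $\P^\delta(T)$ commutes with $D_T$ give
\[\|(1-\P^\delta(T))Q_T u\|_{L^2}^2\leq \delta^{-1}\|D_T(1-\P^\delta(T))Q_T u\|_{L^2}^2\leq \delta^{-1}\|D_T Q_T u\|_{L^2}^2.\]
By formula (\ref{qtdt}), $D_T Q_T u$ vanishes on $M_1\cup M_2$ (since $u_i$ are harmonic) and on $[-1,-1/2]\times Y\cup[1/2,1]\times Y$ equals the tangential terms $D_Y\alpha_1(-1,y)e^{-f_T-T/2}$, $D_Y\beta_2(1,y)\wedge ds\,e^{f_T-T/2}$. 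The exponential again integrates to $O(T^{-1/2})$ in $s$, while the $L^2(Y)$-norms of $D_Y\alpha_1(-1,\cdot)$ and $D_Y\beta_2(1,\cdot)$ are controlled by $\|u\|_{L^2}^2$ via the trace theorem and elliptic regularity for the harmonic form $u$. On the transition zones $[-1/2,-1/4]\times Y\cup[1/4,1/2]\times Y$ where $\eta'\neq 0$, additional terms arise, but the weight $e^{-2f_T-T}$ is uniformly $O(e^{-cT})$ there, making these contributions negligible.

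The weighted statements follow immediately from the unweighted ones via the isometry $w\mapsto e^{f_T}w$ from $(L^2\Omega(\bm,\F),\|\cdot\|_{L^2})$ to $(L^2\Omega(\bm,\F),\|\cdot\|_{L^2,T})$: it intertwines $Q_T$ with $\Q_T$ and $\P^\delta(T)$ with $\tP^\delta(T)$, and sends $\cE(u)$ to $\cE(e^{f_T}u)$. For the spanning claim, Theorem \ref{eigencon} gives $\dim\Omega_{\sm}(\bm,\F)(T)=\dim\ker\Delta_1+\dim\ker\Delta_2$ for $T$ large. The second estimate shows that $u\mapsto\P^\delta(T)Q_T u$ is injective on $\ker\Delta_1\oplus\ker\Delta_2$ for such $T$ (if $\P^\delta(T)Q_T u=0$ then $\|\cE(u)\|_{L^2}^2\leq C\|u\|_{L^2}^2/\sqrt{T}$, forcing $u=0$), hence bijective onto $\Omega_{\sm}(\bm,\F)(T)$ by dimension; conjugation by $e^{f_T}$ transfers this to the weighted setting. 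The main obstacle I anticipate is the careful bookkeeping of $\|D_T Q_T u\|_{L^2}^2$ near the transition zones of $\eta$, where one must combine the explicit form of $f_T$ with the harmonicity of $u$ to absorb all deviations from the product model into the exponentially small term.
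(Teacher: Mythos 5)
Your proposal is correct and follows essentially the same route as the paper's proof: the trace inequality for the harmonic boundary values, the Gaussian decay of the weight $e^{-2f_T-T}$ yielding the $O(T^{-1/2})$ factor, the spectral gap together with the commutation of $\P^\delta(T)$ and $D_T$ (the paper's equations (8)–(9)), and conjugation by $e^{f_T}$ for the weighted statements. You supply a bit more explicit bookkeeping than the paper — handling the $\eta'$ transition zone and spelling out the injectivity-plus-dimension argument for the spanning claim, both of which the paper leaves to "a straightforward computation" — but the underlying idea is identical.
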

	\begin{proof} For $u \in \operatorname{ker}\left(\Delta_1\right) \oplus \operatorname{ker}\left(\Delta_2\right)$, set $v_T=Q_T(u)-\mathcal{P}^\delta(T) Q_T(u)=(1-\P^\delta(T))Q_T(u)$.
		First, by the trace formula, G\aa rding's inequality and the boundary conditions,
		\begin{align}\begin{split}\label{6}
				&\ \ \ \ \int_Y\left|u\left((-1)^i, y\right)\right|^2 +\left|D^Y u\left((-1)^i,y\right)\right|^2\operatorname{dvol}_Y \leq C  \int_{M_i}\left|u\right|^2 \mathrm{dvol}.
		\end{split}\end{align}
		By (\ref{6}) and a straightforward computation, one can see that (e.g. \eqref{ineqii})
		\be\label{qteu}
		\left\|Q_T u-\cE(u)\right\|_{L^2}^2 \leq\frac{C}{\sqrt{T}}\|u\|^2_{L^2}
		\ee
		and
		\be\label{8}
		\left\|D_T Q_T u\right\|_{L^2}^2 \leq \frac{C}{\sqrt{T}}\|u\|^2_{L^2} .
		\ee
		
		Moreover,
		\be\label{9}
		\delta\left\|v_T\right\|_{L^2}^2 \leq\left\|D_T v_T\right\|_{L^2}^2 \leq\left\|D_T Q_T u\right\|_{L^2}^2 .
		\ee
		For the last inequality of \eqref{9}, just note that $D_T$ commutes with $\P^\delta(T)$, so $D_Tv_T=(1-\P^{\delta}(T))D_TQ_Tu.$

		(\ref{8}) and (\ref{9}) then imply that
		$$
		\left\|v_T\right\|_{L^2}^2 \leq \frac{C\|u\|^2_{L^2}}{\delta \sqrt{T}}.
		$$
		That is,
		\be\label{qteu1}
		\left\|\mathcal{P}^\delta(T) Q_T(u)-Q_T(u)\right\|_{L^2}^2 \leq \frac{C\|u\|_{L^2}^2}{\delta \sqrt{T}} .
		\ee
		The proposition follows from \eqref{qteu} and \eqref{qteu1}.
	\end{proof}
	
	Notice that if $u\in\Omega_{\bd}(M_i,F_i)$, then $\Q_Tu\in\Omega_{\bd}(\bm_i,\F_i)_T$. By Hodge theory and Theorem \ref{eigencon}, when $T$ is big enough, all eigenvalues of $\tD_{T,i}$ inside $[0,\delta]$ must be $0.$ Let $\tP_{T,i}$ be the orthogonal projection onto $\ker(\tD_{T,i})$. Similarly, one has
	\begin{prop} \label{last2p}For $u \in \operatorname{ker}\left(\Delta_i\right)$,
		$$
		\begin{array}{c}
			\left\|\Q_T u-\cE(e^{f_T}u)\right\|_{L^2,T}^2 \leq\frac{C\|e^{f_T}u\|^2_{L^2,T}}{\sqrt{T}}, \\
			\left\|\tP_{T,i} \Q_T(u)-\cE(e^{f_T}u)\right\|_{L^2,T}^2 \leq \frac{C\|e^{f_T}u\|_{L^2,T}^2}{\sqrt{T}}.
		\end{array}
		$$
		As a result, when $T$ is large enough, $\tP^\delta(T) \Q_T(u)$ spans $\H(\bar{M}_i, \bar{F}_i)(T)$ for $u \in \operatorname{ker}\left(\Delta_i\right)$.
	\end{prop}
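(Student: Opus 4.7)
The plan is to mirror the proof of Proposition \ref{last1p} piece-by-piece, after first transferring all weighted estimates into unweighted ones. Since $f_T$ is constant ($= (-1)^i T/2$) on $M_i$ and $\cE(u)$ is supported in $M_i$, one has $\cE(e^{f_T}u)=e^{f_T}\cE(u)$. The identity $(\alpha,\beta)_{L^2,T}=(e^{-f_T}\alpha,e^{-f_T}\beta)_{L^2}$ together with $\tD_{T,i}=e^{f_T}\Delta_{T,i}e^{-f_T}$ (cf.\ \cref{witwei}) yields $\tP_{T,i}=e^{f_T}P_{T,i}e^{-f_T}$, where $P_{T,i}$ is the ordinary $L^2$-orthogonal projection onto $\ker(\Delta_{T,i})$. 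Consequently the two claimed inequalities are equivalent to
\[
\|Q_Tu-\cE(u)\|_{L^2(\bm_i)}^2\leq\tfrac{C}{\sqrt{T}}\|u\|_{L^2(M_i)}^2,\qquad \|P_{T,i}Q_Tu-\cE(u)\|_{L^2(\bm_i)}^2\leq\tfrac{C}{\sqrt{T}}\|u\|_{L^2(M_i)}^2.
\]

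Next I would verify that $Q_Tu\in\Omega_{\bd}(\bm_i;\F_i)(T)$: on a neighborhood of $\{0\}\times Y\subset\partial\bm_i$ the section $Q_Tu$ is identically zero because $\eta\equiv 0$ on $[0,1/4]$, so all interior boundary conditions (both original and the $d_T$-versions) hold trivially. The first inequality then follows from an explicit computation on the tube: on $[-1,0]\times Y\subset\bm_1$ one has $|Q_Tu|^2=\eta(-s)^2|u_1(-1,y)|^2 e^{-2f_T(s)-T}$, and combining the trace bound $\int_Y|u_1(-1,y)|^2\dvol_Y\leq C\|u\|_{L^2(M_1)}^2$ (Lemma \ref{limit1}, applied to the harmonic $u$) with the Gaussian integral
\[
\int_{-1}^{0}e^{-2f_T(s)-T}\,ds=O(T^{-1/2})
\]
coming from $f_T(s)+T/2\sim T(s+1)^2$ near $s=-1$, gives the stated bound; the analogous calculation handles $\bm_2$.

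For the second inequality I would estimate $\|D_TQ_Tu\|_{L^2(\bm_i)}^2$. On $M_i$ the form $u$ is harmonic and $f_T$ is constant, so $D_TQ_Tu=Du=0$ there. On $[-1,0]\times Y$ the derivative $D_TQ_Tu$ produces either an $\eta'$-term or a $\nabla^Y u_1(-1,y)$-term, both multiplied by the same Gaussian weight $e^{-f_T-T/2}$; another application of Lemma \ref{limit1} (now to $u_1$ and its tangential derivatives along $Y$, valid because $u$ is harmonic) and the same Gaussian integral yield $\|D_TQ_Tu\|_{L^2(\bm_i)}^2=O(T^{-1/2})\|u\|_{L^2}^2$. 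Using the spectral gap recorded just before the statement of Proposition \ref{last2p}---namely that for $T$ large all eigenvalues of $\Delta_{T,i}$ inside $[0,\delta]$ are zero---one concludes
\[
\delta\|(1-P_{T,i})Q_Tu\|_{L^2}^2\leq \|D_T(1-P_{T,i})Q_Tu\|_{L^2}^2\leq\|D_TQ_Tu\|_{L^2}^2=O(T^{-1/2})\|u\|_{L^2}^2,
\]
which is the second inequality. The spanning assertion follows by a dimension count: $\dim\ker(\Delta_i)=\dim H_{\bd}(M_i;F_i)=\dim H_{\bd}(\bm_i;\F_i)=\dim\ker(\tD_{T,i})$ (Hodge theory plus the diffeomorphism $\bm_i\cong M_i$), and the approximation $\tP_{T,i}\Q_Tu\approx\cE(e^{f_T}u)$ forces the linear map $u\mapsto\tP_{T,i}\Q_Tu$ to be injective, hence bijective, for $T$ large.

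The main obstacle is careful bookkeeping rather than new analysis: one has to confirm that $Q_Tu$ sits in the correct domain so that the spectral estimate $\|(1-P_{T,i})v\|^2\leq\delta^{-1}\|D_Tv\|^2$ is applicable (this is where the vanishing of $Q_Tu$ near $\{0\}\times Y$ matters), and one must track the Gaussian constants uniformly in $T$. No new analytic input beyond the trace bounds of Lemma \ref{limit1} and the conjugation calculus between $\Delta_T$ and $\tD_T$ is required.
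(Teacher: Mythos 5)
Your proposal is correct and takes essentially the same route as the paper: the paper simply notes that $\Q_Tu\in\Omega_{\bd}(\bm_i;\F_i)$ and that the spectral gap $[0,\delta]\cap\mathrm{spec}(\tD_{T,i})=\{0\}$ holds for large $T$, and then says the argument proceeds ``similarly'' to Proposition~\ref{last1p} (trace bound from Lemma~\ref{limit1}, Gaussian integral on the tube, then $\delta\|(1-\tP_{T,i})\Q_Tu\|^2\leq\|\tilde D_T\Q_Tu\|^2$, then a dimension count). Your preliminary reduction from the $(\cdot,\cdot)_{L^2,T}$-weighted picture to the unweighted $\Delta_{T,i}$/$Q_T$ picture via $\tP_{T,i}=e^{f_T}P_{T,i}e^{-f_T}$ is a clean and valid normalization that the paper leaves implicit, but it does not change the substance of the argument.
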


	\begin{prop}\label{last8p}
		$\te_{k,T}$ and $\tir^*_{k,T}$ are almost isometric embeddings as $T\to\infty$. That is, for example, for any $u\in \H^k(\bm_2;\F_2)(T)$, $\lim_{T\to\infty}\frac{\|\te_{k,T}u\|_{L^2,T}}{\|u\|_{L^2,T}}=1.$ Here $\tir^*_{k,T}$ is the adjoint of $\tir_{k,T}.$
	\end{prop}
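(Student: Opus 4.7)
The plan is to reduce both assertions to a single estimate. First I observe that $\tir_{k,T}^*$ admits the same form as $\te_{k,T}$: for $u\in\tO_{\sm}^k(\bm,\F)(T)$ and $w\in\H^k(\bm_1,\F_1)(T)$, using $u=\tP^\delta(T)u$ together with the harmonicity $\tP_{T,1}w=w$,
\[
(\tir_{k,T}u,w)_{L^2(\bm_1),T}=(u|_{\bm_1},w)_{L^2(\bm_1),T}=(u,\cE(w))_{L^2(\bm),T}=(u,\tP^\delta(T)\cE(w))_{L^2(\bm),T},
\]
so $\tir_{k,T}^*(w)=\tP^\delta(T)\cE(w)$. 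Both maps thus have the form $w\mapsto\tP^\delta(T)\cE(w)$, and it suffices to prove: for each $i\in\{1,2\}$ and every $w\in\H^k(\bm_i,\F_i)(T)$,
\[
\|\tP^\delta(T)\cE(w)\|_{L^2(\bm),T}=(1+o(1))\|w\|_{L^2(\bm_i),T}\qquad\text{as }T\to\infty.
\]

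To establish this, I would invoke Proposition \ref{last2p} together with Theorem \ref{eigencon} (which yields $\dim\H^k(\bm_i,\F_i)(T)=\dim\ker(\Delta_i)$ for large $T$) to obtain the unique $u_i\in\ker(\Delta_i)$ with $w=\tP_{T,i}\Q_T(u_i)$. Proposition \ref{last2p} then gives
\[
\|w-\cE(e^{f_T}u_i)\|_{L^2(\bm_i),T}\leq CT^{-1/4}\|u_i\|_{L^2(M_i)},
\]
using $\|e^{f_T}u_i\|_{L^2(M_i),T}=\|u_i\|_{L^2(M_i)}$ and the fact that extension by zero from $M_i$ to $\bm_i$ is an $L^2_T$-isometry. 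In particular $\|w\|_{L^2(\bm_i),T}=(1+O(T^{-1/4}))\|u_i\|_{L^2(M_i)}$, and extending once more by zero from $\bm_i$ to $\bm$ (again an $L^2_T$-isometry) yields
\[
\|\cE(w)-\cE(e^{f_T}u_i)\|_{L^2(\bm),T}\leq CT^{-1/4}\|u_i\|_{L^2(M_i)},
\]
where $\cE(e^{f_T}u_i)$ is now regarded as a form on all of $\bm$, supported in $M_i$.

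The final ingredient is that $\tP^\delta(T)$ acts as the near-identity on $\cE(e^{f_T}u_i)$. Let $U\in\ker(\Delta_1)\oplus\ker(\Delta_2)$ denote the pair whose only nonzero component is $u_i$. Applying Proposition \ref{last1p} to $U$, both $\Q_T(U)$ and $\tP^\delta(T)\Q_T(U)$ lie within $CT^{-1/4}\|u_i\|_{L^2(M_i)}$ of $\cE(e^{f_T}u_i)$ in $L^2_T(\bm)$. The triangle inequality, combined with the fact that $\tP^\delta(T)$ is an orthogonal projection (hence of operator norm at most $1$), then gives
\[
\|\tP^\delta(T)\cE(e^{f_T}u_i)-\cE(e^{f_T}u_i)\|_{L^2(\bm),T}\leq C'T^{-1/4}\|u_i\|_{L^2(M_i)}.
\]
Combining with the extension estimate yields $\|\tP^\delta(T)\cE(w)-\cE(e^{f_T}u_i)\|_{L^2(\bm),T}=O(T^{-1/4})\|u_i\|_{L^2(M_i)}$, and since $\|\cE(e^{f_T}u_i)\|_{L^2(\bm),T}=\|u_i\|_{L^2(M_i)}=(1+o(1))\|w\|_{L^2(\bm_i),T}$, the desired almost isometry follows.

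The entire argument is essentially careful bookkeeping across three extension-by-zero operators ($M_i\hookrightarrow\bm_i$, $\bm_i\hookrightarrow\bm$, $M_i\hookrightarrow\bm$), each of which is an $L^2_T$-isometry; the genuine analytic content is fully absorbed in Propositions \ref{last1p} and \ref{last2p}. The main (mild) obstacle is the compatibility of those two propositions: both use $\cE(e^{f_T}u_i)$ as their reference model---on $\bm_i$ in Proposition \ref{last2p} and on $\bm$ in Proposition \ref{last1p}---but this reference is intrinsically the same form, supported in $M_i$ and trivially extended, which allows the two chains of approximation to be glued through a single triangle inequality.
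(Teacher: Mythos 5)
Your proposal is correct and essentially coincides with the paper's own argument: both identify $\tir_{k,T}^*(w)=\tP^{\delta}(T)\cE(w)$ by the same integration-by-parts/adjoint computation, reduce both maps to the unified form $w\mapsto\tP^{\delta}(T)\cE(w)$, and then combine Propositions \ref{last1p} and \ref{last2p} with $\|\tP^{\delta}(T)\|\leq1$ via the triangle inequality against the reference $\cE(e^{f_T}u_i)$. Your write-up is somewhat more explicit about the chain of extension-by-zero isometries, but the analytic content and the sequence of estimates are the same.
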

	\begin{proof}\ \\
		$\bullet${\textit{$\te_{k,T}$ is almost isometric:}}\\
		For any $u\in \H^k(\bm_2,\F_2)(T)$, there exists $u_T\in \ker(\Delta_2)\cap\Omega^k_{\rel}(M_2,F_2)$ such that
		$u=\tP_{T,i} \Q_T(u_T)$, then by Proposition \ref{last2p}, \be\label{lasteq1}\|u\|^2_{L^2,T}= (1+O(T^{-\frac{1}{2}}))\|e^{f_T}u_T\|^2_{L^2,T}.\ee
		While by Proposition \ref{last2p}, Proposition \ref{last1p} and the fact that $\|\tP^\delta(T)\|=1$,
		\begin{align}\label{lasteq2}  \begin{split}
				&\ \ \ \ \|\te_{k,T}u\|^2_{L^2,T}=\|\tP^\delta(T) \cE u\|^2_{L^2,T}\\
				&= \|\tP^\delta(T) \Q_Tu_T\|^2_{L^2,T}+O(T^{-\frac{1}{2}})\|e^{f_T}u_T\|^2_{L^2,T}=\|e^{f_T}u_T\|^2_{L^2,T}(1+O(T^{-\frac{1}{2}})).
			\end{split}
		\end{align}
		It follows from (\ref{lasteq1}) and (\ref{lasteq2}) that $\lim_{T\to\infty}\frac{\|\te_{k,T}u\|^2_{L^2,T}}{\|u\|^2_{L^2,T}}=1.$ 
		\\ \ \\
		$\bullet${\textit{$\tir^*_{k,T}$ is almost isometric:}}\\
		For $u\in \H^k(\bm_1,\F_1)(T),$ we first show that \be\label{rktvu}\tir^*_{k,T}u=\tP^{\delta}(T)\cE(u).\ee
		Notice that for any $v\in \tO_{\sm}(\bm,\F)(T)$, 
		\begin{align*}
			(\tir_{k,T}v,u)_{L^2(\bm_2),T}=(v,\cE(u))_{L^2(\bm),T}=(v,\tP^{\delta}(T)\cE(u))_{L^2(\bm),T}.
		\end{align*}
		Following the same steps as above, one derives that $\tir^*_{k,T}$ is almost isometric.
	\end{proof}

	\begin{thm}\label{exathm}
		With maps $\te_{k,T}$ and $\tir_{k,T}$ given above, the sequence (\ref{short}) is exact.
	\end{thm}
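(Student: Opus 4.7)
The plan is to verify exactness at each of the three positions of the sequence. Injectivity of $\te_{k,T}$ and surjectivity of $\tir_{k,T}$ are immediate consequences of Proposition \ref{last8p}: the almost-isometric property forces $\te_{k,T}$ to be injective for $T$ sufficiently large, and similarly $\tir^*_{k,T}$ is injective, which dualizes to surjectivity of $\tir_{k,T}$.

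For the middle position, I would first count dimensions. Theorem \ref{eigencon} implies that for $T$ large every eigenvalue of $\tD_T$ on $k$-forms in $[0,\delta]$ comes from a limiting zero eigenvalue of $\Delta_1\oplus\Delta_2$, so
\[
\dim \tO_{\sm}^k(\bm,\F)(T) = \dim H^k_{\abs}(M_1,F_1) + \dim H^k_{\rel}(M_2,F_2) = \dim\H^k(\bm_1)(T) + \dim\H^k(\bm_2)(T).
\]
With this, together with the injectivity of $\te_{k,T}$ and the surjectivity of $\tir_{k,T}$, a rank-nullity argument yields $\dim\im(\te_{k,T}) = \dim\ker(\tir_{k,T})$, so that middle exactness reduces to the containment $\im(\te_{k,T}) \subseteq \ker(\tir_{k,T})$, i.e. $\tir_{k,T}\circ\te_{k,T} = 0$.

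To verify the composition vanishes, pair against an arbitrary $w\in\H^k(\bm_1)(T)$. Using self-adjointness of $\tP^\delta(T)$ and the fact that $\cE(w)$ is supported on $\bm_1$,
\[
(\tir_{k,T}\te_{k,T}(u), w)_{L^2(\bm_1),T} = (\tP^\delta(T)\cE(u), \cE(w))_{L^2(\bm),T} = -((\Id-\tP^\delta(T))\cE(u), \cE(w))_{L^2(\bm),T},
\]
the last equality because $\cE(u),\cE(w)$ have disjoint supports. The plan is then to decompose $(\Id-\tP^\delta(T))\cE(u)$ via the Hodge decomposition of $L^2\Omega(\bm,\F)$ with respect to $\tD_T$ restricted to the high-eigenvalue complement (on which $\tD_T$ is invertible): write this piece as $d_T\alpha + d_T^*\beta$ for suitable $\alpha,\beta$, and integrate by parts against $\cE(w)$ over $\bm_1$. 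Using $\tD_T$-harmonicity of $w$ together with its absolute boundary condition at $\{0\}\times Y$, the $d_T,d_T^*$ contributions reduce to boundary integrals over $\{0\}\times Y$; combining these with the relative boundary condition satisfied by $u$ on $\bm_2$, the boundary terms cancel and the pairing vanishes exactly.

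The main technical difficulty is that $\cE(u),\cE(w)$ are only $L^2$ across $\{0\}\times Y$ (each has a jump discontinuity there), so the Hodge-decomposition step and the subsequent integration by parts must be justified carefully. I would handle this by approximating $\cE(u)$ and $\cE(w)$ by smooth forms supported away from a shrinking neighborhood of $\{0\}\times Y$, performing the integration by parts in the approximations, and then passing to the limit using the quantitative bounds of Propositions \ref{last1p} and \ref{last2p} together with the trace-type estimates of Lemma \ref{limit1} to control the boundary errors; the vanishing of the limit then gives $\tir_{k,T}\circ\te_{k,T}=0$ exactly.
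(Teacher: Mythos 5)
Your reduction of middle exactness to $\tir_{k,T}\circ\te_{k,T}=0$ via the rank--nullity argument (using the dimension count from Theorem \ref{eigencon}) and the identity
\[
(\tir_{k,T}\te_{k,T}(u), w)_{L^2(\bm_1),T} = -\bigl((\Id-\tP^\delta(T))\cE(u), \cE(w)\bigr)_{L^2(\bm),T}
\]
is fine, and is essentially what the paper does. The gap is in the mechanism you propose to show this last pairing vanishes. The observation that makes everything collapse cleanly is that, for $u\in\ker(\tD_{T,2})$ with the relative boundary condition on $\bm_2$, the zero-extension $\cE(u)$ lies \emph{weakly} in $\ker(d^{\F})$ (integrate by parts on $\bm_2$ against an arbitrary test form; the relative condition kills the boundary term), and symmetrically $\cE(w)\in\ker(d^{\F,*}_T)$ for $w\in\ker(\tD_{T,1})$ with the absolute condition. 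Since $\tP^\delta(T)$ commutes with $d^{\F}$ and $d^{\F,*}_T$, it follows that $(\Id-\tP^\delta(T))\cE(u)\in\overline{\im\,d^{\F}}$ and $(\Id-\tP^\delta(T))\cE(w)\in\overline{\im\,d^{\F,*}_T}$, and these summands of the Hodge decomposition are orthogonal; together with the disjoint-support orthogonality of $\cE(u)$ and $\cE(w)$ this gives the vanishing outright, with no boundary analysis on $\bm_1$.

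By contrast, your plan writes $(\Id-\tP^\delta(T))\cE(u)=d_T\alpha+d_T^*\beta$ with both pieces a priori present and then integrates by parts over $\bm_1$. The $\alpha$-contribution does reduce to a boundary term at $\{0\}\times Y$ that vanishes from $w$'s absolute boundary condition, but the $\beta$-contribution produces a boundary term $\int_Y\langle \iota^*(*\beta),\iota^*w\rangle$ that does \emph{not} vanish for any reason you cite. Your claim that ``combining these with the relative boundary condition satisfied by $u$ on $\bm_2$, the boundary terms cancel'' is unsupported: $\beta$ is a Hodge potential for a globally defined form, not $u$ itself, and $u$'s boundary condition at $\{0\}\times Y$ from the $\bm_2$ side has no direct bearing on an integration-by-parts boundary term coming from the $\bm_1$ side. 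What is actually missing is precisely the statement that $\beta$ may be taken to be zero, which is exactly the consequence of $\cE(u)\in\ker(d^{\F})$ that the paper isolates. Once you have that, the approximation-by-forms-supported-away-from-$\{0\}\times Y$ device becomes unnecessary (and, as described, it would tend to suppress the very boundary contributions you are trying to track rather than control them).
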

	\begin{proof}
		Let $\lan\cdot,\cdot\ran_T$ be the metric on $\Lambda(T^*\bm)\otimes \F$ induced by $g^{T\bm}$ and $h_T^{\F}.$\\ \ \\
		$\bullet$ In follows from Proposition \ref{last8p} that $\te_{k,T}$ and $\tir^*_{k,T}$ are injective when $T$ is large.\\ \ \\
		$\bullet$ {$\cE(\ker(\tilde{\Delta}_{T,1}))\subset \ker(d^{\F,*}_T)$ and $\cE(\ker(\tilde{\Delta}_{T,2}))\subset\ker(d^{\F})$:}\\ \ \\
		Let $u\in\ker(\tD_{T,2})$. First, since $u$ satisfies relative boundary conditions, integration by parts shows that for any $\beta\in \Omega(\bm,\F)$, $\int_{\bm}\lan \cE(u),d^{\F,*}_T\beta\ran_T\dvol=0$. Thus, by Theorem \ref{hodec}, $\cE(u)\in\ker(d^{\F})$.
		Similarly, $\cE(\ker(\tilde{\Delta}_{T,1}))\subset \ker(d^{\F,*}_T)$.
		\\ \ \\
		$\bullet$ \textit{$\im\  \te_{k,T}=\ker \tir_{k,T}$:}\\
		For the dimension reason, it suffices to show that $\im \te_{k,T}\subset\ker \tir_{k,T}$. That is, it suffices to show $\im \te_{k,T}\perp \im \tir^*_{k,T}$. First, for any $u_i\in\ker(\tilde\Delta_{T,i})$, $i=1,2$, it's clear that \be\label{eq200}(\cE(u_1),\cE(u_2))_{L^2,T}=0.\ee
		
		Since $\cE(u_1)\in \ker(d^{\F,*}_T),\cE(u_2)\in\ker(d^{\F})$, one can see that
		$(1-\tP^{\delta}(T))\cE u_1\in\im\ d^{\F,*}_T, (1-\tP^{\delta}(T))\cE u_2\in\im\ d^{\F}$, which means
		\be\label{eq201}((1-\tP^{\delta}(T))\cE(u_1),(1-\tP^{\delta}(T))\cE(u_2))_{L^2,T}=0.\ee
		By (\ref{eq200}) and (\ref{eq201}), the definition of $\te_{k,T}$ and \eqref{rktvu}, $(\te_{k,T}u_2,\tir^*_{k,T}u_1)_{L^2,T}=0.$
		
	\end{proof}

	Integration by parts as in the proof of Theorem \ref{exathm}, one can show easily that
	\begin{prop}\label{exapp}
		$d^{\F}\circ\te_{k,T}=0,\tir_{k,T}\circ d^{\F}=0.$
	\end{prop}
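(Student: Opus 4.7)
The plan is to verify the two identities separately; each reduces to a short formal manipulation once the relevant boundary conditions and commutation relations are identified.

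For $d^{\F}\circ\te_{k,T}=0$, the key observation I would use is that $d^{\F}$ commutes with the weighted Laplacian $\tD_T = d^{\F}d_T^{\F,*}+d_T^{\F,*}d^{\F}$: indeed $(d^{\F})^2=0$ yields $d^{\F}\tD_T = d^{\F}d_T^{\F,*}d^{\F} = \tD_T d^{\F}$. By the spectral theorem, $d^{\F}$ therefore commutes with $\tP^{\delta}(T)$, so for $u\in\H^k(\bm_2,\F_2)(T) = \ker(\tD_{T,2})$,
\[
d^{\F}\te_{k,T}(u) = d^{\F}\tP^{\delta}(T)\cE(u) = \tP^{\delta}(T)\, d^{\F}\cE(u).
\]
It then remains to show $d^{\F}\cE(u) = 0$ distributionally on $\bm$. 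On $\bm_2$ this is just $d^{\F_2}u=0$ from Hodge theory, and the relative boundary condition $ds\wedge u|_{\{0\}\times Y}=0$ ensures that extension by zero produces no jump contribution across the interface (an observation already exploited in the proof of Theorem \ref{exathm}); thus $\cE(u)\in\Dom(d^{\F})$ with $d^{\F}\cE(u)=0$.

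For $\tir_{k,T}\circ d^{\F}=0$, I would argue by pairing. Given $u\in\tO_{\sm}^k(\bm,\F)(T)$, one has $\tir_{k,T}(d^{\F}u) = \tP_{T,1}\bigl(d^{\F_1}(u|_{\bm_1})\bigr)$, so it suffices to verify that $d^{\F_1}(u|_{\bm_1})$ is $(\cdot,\cdot)_{L^2(\bm_1),T}$-orthogonal to every $\omega\in\ker(\tD_{T,1})$. By Hodge theory on $\bm_1$ with absolute boundary conditions, any such $\omega$ satisfies $d_T^{\F_1,*}\omega=0$ together with $i_{\partial_s}\omega|_{\{0\}\times Y}=0$. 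Integration by parts on $\bm_1$ then gives
\[
(d^{\F_1}(u|_{\bm_1}),\omega)_{L^2(\bm_1),T} = (u|_{\bm_1}, d_T^{\F_1,*}\omega)_{L^2(\bm_1),T} + \int_{\{0\}\times Y}\langle u, i_{\partial_s}\omega\rangle_T\,\dvol_Y,
\]
and both terms on the right-hand side vanish.

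I do not anticipate a serious obstacle here: both identities are formal consequences of the commutation $[d^{\F},\tD_T]=0$ together with the way the absolute and relative boundary conditions interact with extension by zero and with integration by parts. The only point requiring mild care is verifying that $\cE(u)$ genuinely lies in the $L^2$-domain of $d^{\F}$ for $u\in\ker(\tD_{T,2})$, which is exactly what the relative boundary condition $ds\wedge u = 0$ supplies.
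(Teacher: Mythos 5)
Your proof is correct, and both identities are established along lines compatible with the paper's (the paper just says ``integration by parts as in the proof of Theorem~\ref{exathm}''). For the second identity your argument is exactly what the paper intends: pair $d^{\F_1}(u|_{\bm_1})$ against $\omega\in\ker(\tD_{T,1})$, integrate by parts on $\bm_1$, and use $d_T^{\F_1,*}\omega=0$ together with the absolute boundary condition $i_{\partial_s}\omega|_{\{0\}\times Y}=0$; equivalently one can observe, as in Theorem~\ref{exathm}, that $\cE(\omega)\in\ker(d_T^{\F,*})$ on $\bm$ and pair there.

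For the first identity you take a slightly different route than the paper. The paper's implicit argument is via the Hodge decomposition: since $\cE(u)\in\ker(d^{\F})$ (established in Theorem~\ref{exathm} using the relative boundary condition to kill the jump term), the decomposition of $\cE(u)$ has no $\overline{\im(d_T^{\F,*})}$-component, so $(1-\tP^{\delta}(T))\cE(u)\in\overline{\im(d^{\F})}$; then $\te_{k,T}(u)=\cE(u)-(1-\tP^{\delta}(T))\cE(u)$ is a difference of two $d^{\F}$-closed forms. You instead commute $d^{\F}$ past the spectral projector $\tP^{\delta}(T)$. This works, but the commutation $d^{\F}\tP^{\delta}(T)\alpha=\tP^{\delta}(T)d^{\F}\alpha$ for $\alpha$ merely in $\Dom(d^{\F})$ (not smooth) deserves a word of justification: one can note that $\tD_T$ has discrete spectrum, $\tP^{\delta}(T)$ is finite rank onto a space $V$ of smooth eigenforms, and $d^{\F}V\subset V$, $d_T^{\F,*}V\subset V$ since $d^{\F}$ and $d_T^{\F,*}$ preserve $\tD_T$-eigenspaces; then $(d^{\F}(1-\tP^{\delta}(T))\alpha,\psi)_{L^2,T}=((1-\tP^{\delta}(T))\alpha,d_T^{\F,*}\psi)_{L^2,T}=0$ for all $\psi\in V$, giving $d^{\F}(1-\tP^{\delta}(T))\alpha\perp V$ and hence the commutation. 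Either route is fine; the Hodge-decomposition route is marginally more self-contained since it reuses exactly the ingredients already set up in Theorem~\ref{exathm}.
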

	
	Hence, by Theorem \ref{exathm} and Proposition \ref{exapp}, we get the following long exact sequence again
	\be\label{mvpp}
	\mathcal{MV}(T):\cdots \stackrel{\p_{k-1,T}}\rightarrow \H^k\left(\bm_2;\F_2\right)(T)  \stackrel{e_{k,T}}{\rightarrow}\H^k\left(\bm;\F\right)(T)  \stackrel{r_{k,T}}{\rightarrow} \H^k\left(\bm_1;\F_1\right)(T)  \stackrel{\p_{k,T}}{\rightarrow} \cdots 
	\ee
	with metric induced by Hodge theory.

	\begin{proof}[Proof of Theorem \ref{int6p}]
		Since (\ref{com1}) and (\ref{com2}) are complexes of finite dimensional vector spaces, it follows from Proposition \ref{last8p} and  \cite[Proposition 2.6]{lesch2013gluing} (or \cite[Theorem 3.1 and Theorem 3.2]{milnor1966whitehead}) that
		\[\lim_{T\to\infty}\log\T_{\sm}(g^{T\bm},h^{\F},\nabla^{\F})(T)-\log\T(T)=0.\]
	\end{proof}
\ \\
{\textbf{Acknowledgment}   
		The author is grateful to Professor Xianzhe Dai and Professor Guofang Wei for their consistently stimulating conversations and encouragement. Appreciation is also extended to Martin Puchol and Yeping Zhang for their insightful discussions. This work is partially supported by the China Postdoctoral Science Foundation (Grant No. 2023M730092).}

	\bibliography{lib}
	
	\bibliographystyle{plain}
	
\end{document}